\newtheorem{theorem}{Theorem}[section]
\newtheorem*{theorem*}{Theorem}
\newtheorem{lemma}[theorem]{Lemma}
\newtheorem{proposition}[theorem]{Proposition}
\newtheorem{corollary}[theorem]{Corollary}
\newtheorem*{conjecture*}{Conjecture}
\newtheorem{remark}[theorem]{Remark}
\newtheorem{definition}[theorem]{Definition}
\newcommand{\ie}{{\em i.e.}\ }
\newcommand{\confer}{{\em cf.}\ }
\newcommand{\opname}[1]{\operatorname{\mathsf{#1}}}
\renewcommand{\mod}{\opname{mod}\nolimits}
\newcommand{\proj}{\opname{proj}\nolimits}
\newcommand{\inj}{\opname{inj}\nolimits}
\newcommand{\Mod}{\opname{Mod}\nolimits}
\newcommand{\per}{\opname{per}\nolimits}
\newcommand{\add}{\opname{add}\nolimits}
\newcommand{\Add}{\opname{Add}\nolimits}
\newcommand{\thick}{\opname{thick}\nolimits}
\newcommand{\nilrep}{\opname{nil.rep}\nolimits}
\newcommand{\cok}{\opname{cok}\nolimits}
\renewcommand{\ker}{\opname{ker}\nolimits}
\newcommand{\Hom}{\opname{Hom}}
\newcommand{\cHom}{\mathcal{H}\it{om}}
\newcommand{\Ext}{\opname{Ext}}
\newcommand{\Aut}{\opname{Aut}}
\newcommand{\End}{\opname{End}}
\newcommand{\ten}{\otimes}
\newcommand{\lten}{\overset{\boldmath{L}}{\ten}}
\newcommand{\ca}{{\mathcal A}}
\newcommand{\cc}{{\mathcal C}}
\newcommand{\cd}{{\mathcal D}}
\newcommand{\cf}{{\mathcal F}}
\newcommand{\ck}{{\mathcal K}}
\newcommand{\cm}{{\mathcal M}}
\newcommand{\co}{{\mathcal O}}
\newcommand{\cp}{{\mathcal P}}
\newcommand{\cs}{{\mathcal S}}
\newcommand{\ct}{{\mathcal T}}
\newcommand{\cx}{{\mathcal X}}
\numberwithin{equation}{section}
\begin{document}

\title[Silting objects, simple-minded collections, $t$-structures and
co-$t$-structures]{Silting objects, simple-minded collections,
$t$-structures and co-$t$-structures for finite-dimensional algebras}

\author{Steffen Koenig}
\address{Steffen Koenig, Universit\"{a}t Stuttgart\\
Institut f\"ur Algebra und Zahlentheorie\\ Pfaffenwaldring 57\\ D-70569 Stuttgart\\
Germany} \email{skoenig@mathematik.uni-stuttgart.de}

\author{Dong Yang}
\address{Dong Yang, Department of Mathematics, Nanjing University, Nanjing 210093, P. R. China}
\email{dongyang2002@gmail.com}
\date{Last modified on \today}


\begin{abstract}
Bijective correspondences are established between (1) silting
objects, (2) simple-minded collections, (3) bounded $t$-structures
with length heart and (4) bounded co-$t$-structures. These
correspondences are shown to commute with mutations and partial orders. The results are
valid for finite dimensional algebras. {A concrete example is given to illustrate how these correspondences help to compute the space of Bridgeland's stability conditions.}\\
{\bf MSC 2010 classification:} 16E35, 16E45, 18E30.\\
{\bf Keywords:} silting object, simple-minded collection, $t$-structure, co-$t$-structure, mutation. 
\end{abstract}

\maketitle

\tableofcontents

\section{Introduction}

Let $\Lambda$ be a finite-dimensional associative algebra. Fundamental objects of study in the representation theory
of $\Lambda$ are the projective modules, the simple modules and the
category of all (finite-dimensional) $\Lambda$-modules. Various
structural concepts have been introduced that include one of these
classes of objects as particular instances. In this article, four
such concepts are related by explicit bijections. Moreover, these
bijections are shown to commute with the basic operation of mutation
and to preserve partial orders.

These four concepts may be based on two different general points of view,
either considering particular generators of categories ((1) and (2)) or
considering structures on categories that identify particular subcategories
((3) and (4)):
\begin{itemize}
\item[(1)]
Focussing on objects that generate categories, the theory of Morita
equivalences has been extended to tilting or derived equivalences.
In this way, projective generators are examples of tilting modules,
which have been generalised further to {\it silting objects} (which
are allowed to have negative self-extensions).
\item[(2)]
Another, and different, natural choice of `generators' of a module category
is the set of simple modules (up to isomorphism). In the context of
derived or stable equivalences, this set is included in the concept
of simple-minded system or {\it simple-minded collection}.
\item[(3)]
Starting with a triangulated category and looking for particular
subcategories, {\it $t$-structures} have been defined so as to provide abelian
categories as their hearts. The finite-dimensional $\Lambda$-modules form
the heart of some $t$-structure in the bounded derived category
$D^b(\mod \Lambda)$.
\item[(4)]
Choosing as triangulated category the homotopy category $K^b(\proj \Lambda)$,
one considers {\it co-$t$-structures}. The additive category $\proj \Lambda$
occurs as the co-heart of some co-$t$-structure in $K^b(\proj \Lambda)$.
\end{itemize}

The first main result of this article is:
\begin{theorem*}[\bf \ref{maintheorembijections}]
Let $\Lambda$ be a finite-dimensional algebra over a field $K$.
There are one-to-one correspondences between
\begin{itemize}
 \item[(1)] equivalence classes of silting objects in $K^b(\proj \Lambda)$,
 \item[(2)] equivalence classes of simple-minded collections in $\cd^b(\mod \Lambda)$,
 \item[(3)] bounded $t$-structures of $\cd^b(\mod \Lambda)$ with length heart,
 \item[(4)] bounded co-$t$-structures of $K^b(\proj \Lambda)$.
\end{itemize}
\end{theorem*}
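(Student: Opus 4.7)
The plan is to construct the four edges of the square $(1)\leftrightarrow(2)\leftrightarrow(3)\leftrightarrow(4)\leftrightarrow(1)$ as explicit assignments and to verify, for each edge, that the two maps are mutually inverse. I would handle the two bijections $(1)\leftrightarrow(4)$ and $(2)\leftrightarrow(3)$ first, since both amount to local statements about additive (co-)hearts; the real content sits in the bridge $(1)\leftrightarrow(3)$, which connects the two worlds of ``generators'' and ``heart-producing structures''.

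For $(1)\leftrightarrow(4)$, I would send a silting object $M \in K^b(\proj \Lambda)$ to the co-$t$-structure on $K^b(\proj\Lambda)$ with co-aisle
$$\cp_{\geq 0} = \{X \in K^b(\proj\Lambda) : \Hom(X, M[i]) = 0 \text{ for all } i>0\},$$
whose co-heart is checked to be $\add(M)$. Conversely, the co-heart of a bounded co-$t$-structure on $K^b(\proj \Lambda)$ has the form $\add(M)$ for some $M$ (finitely many indecomposable summands by finite-dimensionality), and the silting axioms — vanishing of positive self-extensions and generation of the thick subcategory — fall out of the co-$t$-structure axioms together with boundedness. For $(2)\leftrightarrow(3)$, the isomorphism classes of simples in a length heart form a simple-minded collection more or less by definition, while a simple-minded collection generates a $t$-structure by iterated extension-closure; one then verifies that the heart of this $t$-structure is an abelian length category with exactly those objects as simples.

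The bridge $(1)\leftrightarrow(3)$ assigns to $M$ the $t$-structure on $\cd^b(\mod\Lambda)$ cut out by the Hom-vanishing conditions
$$\cd^{\leq 0} = \{X : \Hom(M,X[i]) = 0 \text{ for } i>0\}, \qquad \cd^{\geq 0} = \{X : \Hom(M,X[i]) = 0 \text{ for } i<0\}.$$
The main obstacle, in my view, is to prove that the resulting heart is a \emph{length} category. My strategy is to produce candidate simples indirectly: transport the indecomposable summands of $M$ through $(4)\leftrightarrow(2)$ to obtain a simple-minded collection, use $(2)\leftrightarrow(3)$ to realise this collection as the simples of an \emph{a priori} different bounded $t$-structure on $\cd^b(\mod\Lambda)$, and finally identify that $t$-structure with the one above by a dévissage of an arbitrary $X$ along the degrees in which $\Hom(M,X[i])$ is nonzero. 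Once the four bijections are in place, commutativity of the square amounts to unwinding the Hom-vanishing definitions, and compatibility with mutations and with partial orders reduces to a bookkeeping check that approximation-triangle mutation of silting objects corresponds in parallel to standard mutation of simple-minded collections, to HRS-tilts of $t$-structures, and to mutation of co-$t$-structures.
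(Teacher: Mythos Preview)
Your outline for $(1)\leftrightarrow(4)$ and $(2)\leftrightarrow(3)$ matches the paper's (these are Lemmas~\ref{l:silting-and-co-t-str} and the lemma after it, built on Proposition~\ref{p:AI's-from-silting-to-co-t-str}, Lemma~\ref{l:co-heart}, and Proposition~\ref{p:simpleminded-to-t-str}), and you are right that the real work sits in connecting $(1)$ to $(3)$.

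There is, however, a genuine gap in that connection. Your plan for showing that the Hom-vanishing $t$-structure attached to a silting object $M$ has length heart is to ``transport the indecomposable summands of $M$ through $(4)\leftrightarrow(2)$'' to manufacture a simple-minded collection. But no such map has been described: $(4)$ consists of co-$t$-structures on $K^b(\proj\Lambda)$, and nothing you have written explains how a co-$t$-structure on $K^b(\proj\Lambda)$ produces objects of $\cd^b(\mod\Lambda)$ at all, let alone a simple-minded collection. The passage from a silting object to its associated simple-minded collection is precisely the step that does \emph{not} reduce to a local bookkeeping argument about (co-)hearts, and your proposal is circular here: you are invoking a correspondence you have not yet built in order to supply the simples for the $t$-structure you want. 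The subsequent d\'evissage could identify two $t$-structures once the simple-minded collection is in hand, but it cannot conjure those objects.

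The paper resolves this by a different mechanism. Given a silting object $M$, one forms the truncated dg endomorphism algebra $\tilde\Gamma=\tau_{\leq 0}\cHom_\Lambda(M,M)$, a finite-dimensional non-positive dg algebra, and observes that $?\lten_{\tilde\Gamma}M$ is a triangle equivalence $\cd_{fd}(\tilde\Gamma)\stackrel{\sim}{\rightarrow}\cd^b(\mod\Lambda)$ carrying the \emph{standard} $t$-structure on $\cd_{fd}(\tilde\Gamma)$ (whose heart is visibly $\mod H^0(\tilde\Gamma)=\mod\End(M)$) to your Hom-vanishing $t$-structure (Section~\ref{ss:silting-der-equiv} and Lemma~\ref{l:from-silting}). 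This single equivalence simultaneously proves that the heart is length and identifies its simples, so the map $(1)\to(2)$ becomes ``read off the simple $\End(M)$-modules across the equivalence''. For the inverse $(2)\to(1)$ the paper uses Rickard's homotopy-colimit construction (Section~\ref{s:from-simple-minded-to-silting}), another substantial ingredient absent from your sketch. You will need either the dg equivalence or some comparable machinery to close the gap.
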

Here two sets of objects in a category are \emph{equivalent} if they
additively generate the same subcategory.

\medskip

A common feature of all four concepts it that they allow for comparisons,
often by equivalences. In particular,
each of the four structures to be related comes with a basic operation, called
mutation, which produces a new such structure from a given one. Moreover, on each of the four structures there is a partial order. All the
bijections in Theorem \ref{maintheorembijections}
enjoy the following naturality properties:
\begin{theorem*}[\bf \ref{maintheoremmutations}]
Each of the bijections between the four structures (1), (2), (3) and (4)
commutes with the respective operation of mutation.
\end{theorem*}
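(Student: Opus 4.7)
The plan is to establish the commutativity bijection by bijection rather than all at once, exploiting the fact that the four correspondences form a commuting square: it suffices to check compatibility with mutation on two adjacent edges, and the remaining compatibilities will follow by composition. A natural choice of edges is (1)$\leftrightarrow$(4) (silting $\leftrightarrow$ co-$t$-structure) and (1)$\leftrightarrow$(2) (silting $\leftrightarrow$ simple-minded collection), with (2)$\leftrightarrow$(3) (simple-minded $\leftrightarrow$ $t$-structure) as a direct check.

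For (1)$\leftrightarrow$(4) I would argue as follows. Writing $M=\bigoplus_{i=1}^{n} M_i$ for a silting object, the associated co-$t$-structure has co-aisle $\add\{M_i[-k] : k\geq 0\}$ thickened by extensions, and co-heart $\add(M)$. Left mutation of $M$ at $M_i$ replaces $M_i$ by the cone $M_i^{*}$ of a minimal left $\add(M/M_i)$-approximation $M_i\to E$. The intrinsic left mutation of the co-$t$-structure at $M_i$ shifts the piece of the co-aisle generated by $M_i$, and a direct check using the approximation triangle shows that the new co-heart is precisely $\add(M/M_i \oplus M_i^{*})$. This matches the mutated silting object, because both constructions are determined by the same universal approximation.

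For (1)$\leftrightarrow$(2), the bijection sends $M=\bigoplus M_i$ to the dual simple-minded collection $\{X_1,\ldots,X_n\}$ characterised by the duality $\Hom(M_i, X_j[k])\cong \delta_{ij}\delta_{k0}K$ together with appropriate higher vanishing. Left mutation of the simple-minded collection at $X_i$ replaces $X_i$ by $X_i[-1]$ and each $X_j$ ($j\neq i$) by the extension determined by the universal map from $X_j$ to a suitable power of $X_i[1]$. To show this corresponds to left silting mutation of $M$ at $M_i$, I would apply $\Hom(-,X_k[\ast])$ to the approximation triangle $M_i\to E\to M_i^{*}\to M_i[1]$ and verify directly that the Hom-vanishing conditions characterising the dual simple-minded collection of the mutated silting object coincide with those defining the mutated simple-minded collection of $\{X_1,\ldots,X_n\}$. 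This rigidity argument is the crux: one checks that the multiplicities of summands of $E$ over $\add(M/M_i)$ match the dimensions $\dim \Ext^{1}(X_j, X_i)$ controlling the extensions on the simple-minded side.

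The compatibility (2)$\leftrightarrow$(3) is then essentially a restatement of the description of mutation of a $t$-structure as an HRS-tilt at a simple object of the heart: the simples of the tilted heart are, by construction, exactly the image of the mutated simple-minded collection. The main technical obstacle throughout is the identification carried out in the (1)$\leftrightarrow$(2) step, where one must carefully track both the minimality of the approximation and the Hom-vanishing of the simple-minded collection; once this is done, the other compatibilities are bookkeeping with approximation triangles and the definitions of HRS-tilting.
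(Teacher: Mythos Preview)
Your overall strategy of checking compatibility on a spanning set of edges is sound, and your treatment of the edge (1)$\leftrightarrow$(4) matches the paper's. However, the central edge you choose to verify directly is (1)$\leftrightarrow$(2), whereas the paper instead verifies (1)$\leftrightarrow$(3) (that is, $\phi_{31}$ commutes with mutation) and (3)$\leftrightarrow$(2) (that is, $\phi_{23}$ commutes with mutation). This is a genuine difference in route, and your chosen route has a gap.

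Your description of left mutation of a simple-minded collection is not correct in this paper's conventions: $X_i$ is replaced by $\Sigma X_i$, not $X_i[-1]$, and for $j\neq i$ the new object $X_j'$ is the cone of a minimal left approximation $\Sigma^{-1}X_j\to X_{ij}$ with $X_{ij}$ in the \emph{extension closure} $\cx_i$ of $X_i$, not merely a direct sum of copies of $X_i[1]$. When $\Hom(X_i,\Sigma X_i)\neq 0$ (equivalently $\Ext^1_\Gamma(S_i,S_i)\neq 0$ in the heart), $\cx_i$ is not semisimple, so ``a suitable power of $X_i[1]$'' is not the right target, and the multiplicity-matching claim you propose (comparing the summand multiplicities of $E$ with $\dim\Ext^1(X_j,X_i)$) does not literally make sense. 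Even in the semisimple case, the identification of the multiplicity of $M_j$ in $E$ with the relevant Ext-dimension requires passing through the endomorphism algebra $\Gamma=\End(M)$ and identifying the approximation $M_i\to E$ with a radical map of projective $\Gamma$-modules; you have not carried this out, and it is exactly here that the paper's machinery enters.

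The paper circumvents this direct calculation by transporting everything along the triangle equivalence $F=?\lten_{\tilde\Gamma}M:\cd_{fd}(\tilde\Gamma)\to\cd^b(\mod\Lambda)$, where $\tilde\Gamma$ is the truncated dg endomorphism algebra of $M$. Under $F$, silting mutation of $M$ becomes silting mutation of $\tilde\Gamma$ at the summand $e_i\tilde\Gamma$, the $t$-structure $\phi_{31}(M)$ becomes the standard $t$-structure, and its mutation becomes an HRS-tilt inside $\mod\Gamma$. The key computation is then a surjectivity statement for $\Hom(E,T)\to\Hom(e_i\tilde\Gamma,T)$ when $T$ lies in the torsion class ${}^{\perp}S_i$, proved by reducing via $H^0$ to the analogous map for projective $\Gamma$-modules and using a projective cover in $\mod\Gamma$. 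Thus the non-trivial content you locate in the multiplicity-matching is absorbed into this approximation argument on the $\tilde\Gamma$-side, and the (2)$\leftrightarrow$(3) step is then the short observation that the mutated simple-minded collection lies in the heart of the mutated $t$-structure. If you wish to salvage your direct (1)$\leftrightarrow$(2) route, you would need to reformulate the multiplicity claim to account for non-semisimple $\cx_i$ and carry out the Hom-computations; the paper's dg-algebra reduction is precisely what makes this tractable.
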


\begin{theorem*}[\bf \ref{t:main-thm-partially-ordered-sets}]
Each of the bijections between the four structures (1), (2), (3) and (4)
preserves the respective partial orders.
\end{theorem*}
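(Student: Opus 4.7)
My plan is to express each of the four partial orders as an inclusion of a distinguished subcategory, and then verify that the bijections of Theorem~\ref{maintheorembijections} match these inclusions pair by pair. Recall that the order on $t$-structures is $t_1 \ge t_2$ iff $\cd^{\le 0}_1 \supseteq \cd^{\le 0}_2$, with the analogous convention for co-$t$-structures via co-aisles. On the generator side, $M \ge N$ for silting objects iff $\Hom(M, N[i])=0$ for all $i>0$, and $\{X_i\} \ge \{Y_j\}$ for simple-minded collections iff $\Hom(X_i, Y_j[k])=0$ for all $k<0$.

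With these characterisations in hand, the two bijections that stay inside a single ambient category are straightforward. For silting $\leftrightarrow$ co-$t$-structure in $K^b(\proj\Lambda)$, the co-aisle $\cc_{\ge 0}(M)$ of the co-$t$-structure attached to $M$ is, by construction, the smallest suspended and summand-closed subcategory containing $\{M[i]:i\le 0\}$, and a standard d\'evissage shows $N\in\cc_{\ge 0}(M)$ iff $\Hom(M,N[i])=0$ for all $i>0$. Hence $M\ge N$ iff $\cc_{\ge 0}(M)\supseteq\cc_{\ge 0}(N)$. For SMC $\leftrightarrow$ $t$-structure in $\cd^b(\mod\Lambda)$, the aisle $\cd^{\le 0}(\{X_i\})$ is the extension closure of the shifts $X_i[j]$ with $j\ge 0$, and since the heart has finite length with simple objects $\{X_i\}$, an object $Y_j$ lies in this aisle iff $\Hom(X_i,Y_j[k])=0$ for all $k<0$, again matching the generator-side order.

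The main obstacle, which I would tackle last, is the remaining bijection that crosses from $K^b(\proj\Lambda)$ to $\cd^b(\mod\Lambda)$: silting $\leftrightarrow$ SMC, or equivalently co-$t$-structure $\leftrightarrow$ $t$-structure. The two sets of Hom-vanishings are a priori computed in different ambient categories, so the matching cannot be read off from a single d\'evissage. To bridge them I would use the perpendicular pairing between a silting object $M$ and its SMC $\{X_1,\dots,X_n\}$ set up earlier in the paper: the $X_i$ are precisely the simple objects of the heart of the $t$-structure associated to the co-$t$-structure of $M$, and $\Hom(M,X_i[j])$ vanishes off degree zero. This pairing lets one translate inclusion of co-aisles on the $K^b(\proj\Lambda)$ side into inclusion of co-aisles on the $\cd^b(\mod\Lambda)$ side, since both amount to the vanishing of the same family of Homs between paired objects. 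Composing the three pairwise comparisons then yields that all four bijections preserve the partial orders.
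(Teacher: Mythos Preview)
Your overall strategy---reduce to three pairwise checks and verify each---matches the paper's proof exactly, and your treatment of the two ``easy'' links (silting $\leftrightarrow$ co-$t$-structure and simple-minded collection $\leftrightarrow$ $t$-structure) lines up with what the paper does: the first is the content of \cite[Proposition~2.14]{AiharaIyama12}, and the second is precisely the computation carried out in the proof of Proposition~\ref{p:partial-order-simple-minded}. One small correction: your stated order on simple-minded collections has the arguments of $\Hom$ swapped relative to the paper's convention; it should read $\Hom(Y_j,X_i[k])=0$ for $k<0$, not $\Hom(X_i,Y_j[k])=0$.

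Where your plan diverges from the paper is in the third link. You propose to cross from $K^b(\proj\Lambda)$ to $\cd^b(\mod\Lambda)$ via the silting--SMC Hom-duality $\Hom(M_i,X_j[m])=\delta_{ij}\delta_{m0}$, but your sketch does not explain how this pairing converts the vanishing of $\Hom(M,M'[m])$ into the vanishing of $\Hom(X'_j,X_i[k])$; the phrase ``both amount to the vanishing of the same family of Homs between paired objects'' is not yet an argument. The paper sidesteps this difficulty entirely by choosing $\phi_{34}$ rather than $\phi_{21}$ for the cross-category check: since $\phi_{34}$ is \emph{defined} in Section~\ref{ss:from-co-t-to-t} by sending $(\cc_{\geq 0},\cc_{\leq 0})$ to the right-perpendicular pair $(\,(\Sigma^{-1}\cc_{\geq 0})^\perp,\,(\Sigma\cc_{\leq 0})^\perp\,)$ inside $\cd^b(\mod\Lambda)$, the inclusion $\cc^{\leq 0}\supseteq\cc'^{\leq 0}$ corresponds directly to the reverse inclusion on $\cc_{\geq 0}$, and order-preservation drops out with no further work. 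If you want to salvage your route, the cleanest way is to observe that $\cc^{\leq 0}(M)=\{N:\Hom(M,N[m])=0,\ m>0\}$ in $\cd^b(\mod\Lambda)$ is described by the \emph{same} Hom-vanishing condition that cuts out $\cc_{\geq 0}(M)$ inside $K^b(\proj\Lambda)$---but that observation is already $\phi_{34}=\phi_{31}\circ\phi_{14}$ in disguise, and the perpendicular formulation makes it transparent.
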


{The four concepts are crucial in representation theory, geometry
and topology. They are also closely related to fundamental concepts 
in cluster theory such as clusters (\cite{FominZelevinsky02}),
c-matrices and g-matrices (\cite{FominZelevinsky07,NakanishiZelevinsky12}) and 
cluster-tilting objects (\cite{BuanMarshReinekeReitenTodorov06}). We refer to the survey paper~\cite{BruestleYang13} for more details.}
A concrete example to be given at the end of the article
demonstrates one practical use of these bijections and their properties.

Finally we give some remarks on the literature.
For path algebras of Dynkin quivers, Keller and Vossieck \cite{KellerVossieck88}
have already given a bijection between bounded $t$-structures and silting
objects. The bijection between silting objects and $t$-structures with length heart
has been established by Keller and Nicol\'as~\cite{KellerNicolas12} for
homologically smooth non-positive dg algebras,
 by Assem, Souto Salorio and Trepode~\cite{AssemSoutoTrepode08} and
by Vit\'oria \cite{Vitoria12}, who are focussing on piecewise hereditary
algebras. An unbounded version of
this bijection has been studied by Aihara and Iyama~\cite{AiharaIyama12}. The bijection between
simple-minded collections and bounded $t$-structures has been established implicitely in Al-Nofayee's work~\cite{Al-Nofayee09}
and explicitely for homologically smooth non-positive dg algebras in Keller and Nicol\'as' work~\cite{KellerNicolas12} 
 and for finite-dimensional algebras in our preprint
\cite{KoenigYang10}, which has been {partly}
incorporated into the present article{, and partially in the work~\cite{RickardRouquier10} of Rickard and Rouquier}.
For hereditary algebras, Buan, Reiten and Thomas~\cite{BuanReitenThomas12} studied the bijections between silting objects, 
simple-minded collections (=$\mathrm{Hom}_{\leq 0}$-configurations in their setting) and bounded $t$-structures.
The correspondence between silting objects
and co-$t$-structures appears implicitly on various levels of generality
in the work of Aihara and Iyama~\cite{AiharaIyama12}
and of Bondarko~\cite{Bondarko10} and explicitly in full generality in
the work of Mendoza, S\'aenz, Santiago
and Souto Salorio~\cite{MendozaSaenzSantiagoSouto10} and of
Keller and Nicol\'as~\cite{KellerNicolas11}.
For homologically smooth non-positive dg algebras, all the bijections are
due to Keller and Nicol\'as~\cite{KellerNicolas11}. 
The intersection of our results with
those of Keller and Nicol\'as is the case of finite-dimensional algebras of
finite global dimension. 

\medskip
\noindent{\it Acknowledgement.}
The authors would like to thank Paul Balmer, {Mark Blume}, Martin Kalck, {Henning Krause},
Qunhua Liu, Yuya Mizuno, David Pauksztello, Pierre-Guy Plamondon, {David Ploog}, Jorge  Vit\'{o}ria and Jie Xiao 
for inspiring discussions and helpful remarks. 
The second-named author gratefully acknowledges financial support
from Max-Planck-Institut f\"ur Mathematik in Bonn and from DFG
program SPP 1388 (YA297/1-1). 
He is deeply grateful to Bernhard Keller for
valuable conversations on derived categories of dg algebras.

\section{Notations and preliminaries} \label{section-prelim}

\subsection{Notations}
Throughout, $K$ will be a field. All algebras, modules, vector
spaces and categories are over the base field $K$, and
$\mathrm{D}=\Hom_K(?,K)$ denotes the $K$-dual. By abuse of
notation, we will denote by $\Sigma$ the suspension functors of all
the triangulated categories.

For a category $\cc$, we
denote by $\Hom_{\cc}(X,Y)$ the morphism space from $X$ to $Y$,
where $X$ and $Y$ are two objects of $\cc$. We will omit the
subscript and write $\Hom(X,Y)$ when it does not cause confusion.
For $\cs$ a set of objects or a subcategory of $\cc$, call
\[{}^{\perp}\cs=\{X\in\cc\mid\Hom(X,S)=0 \text{ for all }S\in\cs\}\]
and
\[\cs^\perp=\{X\in\cc\mid\Hom(S,X)=0 \text{ for all }S\in\cs\}\]
the \emph{left} and \emph{right perpendicular category} of $\cs$, respectively.

Let $\cc$ be an additive category and $\cs$ a set of objects or a subcategory of $\cc$.
Let $\Add(\cs)$ and $\add(\cs)$, respectively, denote the smallest
full subcategory of $\cc$ containing all objects of $\cs$ and stable
for taking direct summands and coproducts respectively taking finite coproducts.
The category $\add(\cs)$ will be called the \emph{additive closure} of $\cs$.
If further $\cc$ is abelian or triangulated, the \emph{extension closure} of $\cs$ is the smallest subcategory
of $\cc$ containing $\cs$ and stable under taking extensions.
Assume that $\cc$ is triangulated and let
$\thick(\cs)$ denote the smallest triangulated subcategory of $\cc$ containing
objects in $\cs$ and stable under taking direct summands. We say that $\cs$ is a \emph{set of generators} of $\cc$,
or that $\cc$ is \emph{generated by} $\cs$, when $\cc=\thick(\cs)$.

\subsection{Derived categories}
For a finite-dimensional algebra $\Lambda$, let $\Mod\Lambda$
(respectively, $\mod\Lambda$, $\proj\Lambda$, $\inj\Lambda$) denote the category of right
$\Lambda$-modules (respectively, finite-dimensional right
$\Lambda$-modules, finite-dimensional projective, injective right $\Lambda$-modules),
let $K^b(\proj\Lambda)$ (respectively, $K^b(\inj\Lambda)$) denote the homotopy category
of bounded complexes of $\proj\Lambda$ (respectively, $\inj\Lambda$) and let $\cd(\Mod\Lambda)$ (respectively,
$\cd^b(\mod\Lambda)$, $\cd^{-}(\mod\Lambda)$) denote the derived
category of $\Mod\Lambda$ (respectively, bounded derived category of
$\mod\Lambda$, bounded above derived category of $\mod\Lambda$). All these categories are triangulated with suspension functor the
shift functor. We view $\cd^{-}(\mod\Lambda)$ and
$\cd^b(\mod\Lambda)$ as triangulated subcategories of
$\cd(\Mod\Lambda)$.

The categories $\mod\Lambda$, $\cd^b(\mod \Lambda)$ and $K^b(\proj \Lambda)$ are Krull--Schmidt categories. An object $M$ of $\mod\Lambda$ (respectively, $\cd^b(\mod\Lambda)$, $K^b(\proj \Lambda)$) is said to be \emph{basic} if every indecomposable direct summand of $M$ has multiplicity $1$. The finite-dimensional algebra $\Lambda$ is said to be \emph{basic} if the free module of rank $1$ is basic in $\mod\Lambda$ (equivalently, in $\cd^b(\mod\Lambda)$ or $K^b(\proj\Lambda)$).

For a differential graded(=dg) algebra $A$, let $\cc(A)$ denote the
category of (right) dg modules over $A$ and $K(A)$ the homotopy category. Let $\cd(A)$ denote the
derived category of dg $A$-modules, \ie the triangle quotient of $K(A)$ by acyclic dg $A$-modules, \confer~\cite{Keller94,Keller06d}, and let
$\cd_{fd}(A)$ denote its full subcategory of dg $A$-modules whose
total cohomology is finite-dimensional. The category $\cc(A)$ is abelian and the other three categories are triangulated with suspension functor the shift functor of complexes. Let
$\per(A)=\thick(A_A)$, \ie the triangulated subcategory of $\cd(A)$
generated by the free dg $A$-module of rank $1$. 

 For two dg $A$-modules $M$ and $N$,  let $\cHom_A(M,N)$ denote the
complex whose degree $n$ component consists of those $A$-linear
maps from $M$ to $N$ which are homogeneous of degree $n$, and whose
differential  takes a homogeneous map $f$ of degree $n$ to \mbox{$d_N\circ
f-(-1)^n f\circ d_M$.} Then
\begin{eqnarray}
\Hom_{K(A)}(M,N)=H^0\cHom_A(M,N).\label{eq:morphism-homotopy-cat}
\end{eqnarray} 
A dg $A$-module $M$ is said to be \emph{$\ck$-projective} if  $\cHom_A(M,N)$ is acyclic when $N$ is an acyclic dg $A$-module. For example, $A_A$, the free dg $A$-module of rank $1$ is $\ck$-projective, because $\cHom_A(A,N)=N$. Dually, one defines \emph{$\ck$-injective} dg modules, and $D({}_AA)$ is $\ck$-injective. For two dg $A$-modules $M$ and $N$ such that $M$ is $\ck$-projective or $N$ is $\ck$-injective, we have 
\begin{eqnarray}
\Hom_{\cd(A)}(M,N)=\Hom_{K(A)}(M,N).
\label{eq:cofibrant}\end{eqnarray}

Let $A$ and $B$ be
two dg algebras. Then a triangle equivalence between $\cd(A)$ and
$\cd(B)$ restricts to a triangle equivalence between $\per(A)$ and
$\per(B)$ and also to a triangle equivalence between $\cd_{fd}(A)$
and $\cd_{fd}(B)$. If $A$ is a finite-dimensional algebra viewed as
a dg algebra concentrated in degree 0, then $\cd(A)$ is exactly
$\cd(\Mod A)$, $\cd_{fd}(A)$ is $\cd^b(\mod A)$, $\per(A)$ is
triangle equivalent to $K^b(\proj A)$, and $\thick(\mathrm{D}(_A
A))$ is triangle equivalent to $K^b(\inj A)$.

\subsection{The Nakayama functor}\label{ss:nakayama-for-fd-alg}
Let $\Lambda$ be a finite-dimensional algebra. The \emph{Nakayama
functor} $\nu_{\mod\Lambda}$ is defined as
$\nu_{\mod\Lambda}=?\otimes_\Lambda \mathrm{D}(_\Lambda \Lambda)$,
and the \emph{inverse Nakayama functor} $\nu^{-1}_{\mod\Lambda}$ is
its right adjoint
$\nu^{-1}_{\mod\Lambda}=\Hom_{\Lambda}(\mathrm{D}(_\Lambda
\Lambda),?)$. They restrict to quasi-inverse equivalences between
$\proj\Lambda$ and $\inj\Lambda$.

The derived functors of $\nu_{\mod\Lambda}$ and
$\nu^{-1}_{\mod\Lambda}$, denoted by $\nu$ and $\nu^{-1}$, restrict
to quasi-inverse triangle equivalences  between
$K^b(\proj\Lambda)$ and $K^b(\inj\Lambda)$. When $\Lambda$ is
self-injective, they restrict to quasi-inverse triangle
auto-equivalences of $\cd^b(\mod\Lambda)$.

The Auslander--Reiten formula for $M$ in $K^b(\proj\Lambda)$
and $N$ in $\cd(\Mod\Lambda)$ (\confer~\cite[Chapter 1, Section
4.6]{Happel88}) provides an isomorphism
\[\mathrm{D}\Hom(M,N)\cong\Hom(N,\nu M),\]
which is natural in $M$ and $N$. When $K^b(\proj\Lambda)$ coincides
with $K^b(\inj\Lambda)$ (that is, when $\Lambda$ is Gorenstein), it
has Auslander--Reiten triangles and the Auslander--Reiten
translation is $\tau=\nu\circ\Sigma^{-1}$.

\section{The four concepts}\label{s:the-four-str}
\label{section-fourconcepts}

In this section we introduce silting objects, simple-minded collections, $t$-structures and co-$t$-structure. Let $\cc$ be a triangulated category with suspension functor
$\Sigma$.

\subsection{Silting objects}\label{ss:silting-obj}

A subcategory $\cm$ of $\cc$ is called a \emph{silting
subcategory}~\cite{KellerVossieck88,AiharaIyama12} if it is stable
for taking direct summands and generates $\cc$ (\ie
$\cc=\thick(\cm)$) and if $\Hom(M,\Sigma^m N)=0$ for $m>0$ and
$M,N\in\cm$.

\begin{theorem}\label{t:silting-have-same-number-of-summands}\emph{(\cite[Theorem {2.27}]{AiharaIyama12})}
Assume that $\cc$ is Krull--Schmidt and has a silting subcategory
$\cm$. Then the Grothendieck group of $\cc$ is free and its rank is
equal to the cardinality of the set of isomorphism classes of
indecomposable objects of $\cm$.
\end{theorem}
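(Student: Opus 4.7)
The plan is to exhibit an isomorphism
\[
\phi: \bigoplus_{M \in \ind(\cm)} \Z\cdot e_M \iso K_0(\cc), \qquad e_M \mapsto [M],
\]
where $\ind(\cm)$ denotes a set of representatives of the isomorphism classes of indecomposable objects of $\cm$. From such an isomorphism both the freeness of $K_0(\cc)$ and the rank formula follow immediately. The argument naturally splits into surjectivity (``generation'') and injectivity (``linear independence''), which have quite different character.

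For surjectivity, I would use $\cc = \thick(\cm)$ and filter $\thick(\cm) = \bigcup_{n \geq 0}\cm_n$, where $\cm_0 = \add(\cm)$ and $\cm_{n+1}$ is obtained from $\cm_n$ by closing under shifts, cones, and direct summands. The base case is immediate from Krull--Schmidt: any object of $\add(\cm)$ is a finite direct sum of indecomposables of $\cm$, so its class lies in $\im \phi$. The inductive step uses only the elementary relations $[Y] = [X] + [Z]$ for a triangle $X \to Y \to Z \to \Sigma X$ and $[\Sigma X] = -[X]$, together with the observation that in a Krull--Schmidt category the complement of a direct summand sits at the same stage of the filtration, so that both summand classes are in $\im \phi$ by induction.

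Injectivity is the main obstacle. Given a finite relation $\sum_M a_M[M] = 0$ in $K_0(\cc)$, the goal is to produce $\Z$-linear forms $f_N: K_0(\cc) \to \Z$ satisfying $f_N([M]) = \delta_{MN}$ on indecomposables of $\cm$, which forces every $a_M$ to vanish. The crucial input is the silting hypothesis $\Hom(M, \Sigma^k N) = 0$ for $M, N \in \cm$ and $k > 0$, which---combined with Krull--Schmidt and generation of $\cc$ by $\cm$---supplies for each $X \in \cc$ silting truncation triangles of the form $M \to X \to X' \to \Sigma M$ with $M \in \add(\cm)$ and $X'$ in the ``strictly positive'' part of a compatible filtration. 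Iterating these truncations exhibits each $X$ as built by a finite bounded filtration whose associated graded consists of shifts of objects of $\add(\cm)$, i.e., gives a bounded co-$t$-structure on $\cc$ whose co-heart is $\cm$. Although the filtration itself is not canonical, the signed multiset of indecomposable $\cm$-summands in its associated graded is well-defined modulo the triangle and shift relations, and extracting the multiplicity of $N$ provides the required functional $f_N$ and hence the inverse of $\phi$.

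The technical heart of the argument, and the step I expect to consume the most work, is establishing the existence of the silting truncation triangles and checking that the resulting multiplicities descend to honest homomorphisms on $K_0(\cc)$---equivalently, that any silting subcategory in a Krull--Schmidt triangulated category determines a bounded co-$t$-structure with co-heart $\cm$. Once this structural input is in place, injectivity of $\phi$ is a matter of bookkeeping. This is precisely the content of \cite[Theorem 2.27]{AiharaIyama12}.
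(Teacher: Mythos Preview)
The paper does not give its own proof of this theorem: it is simply quoted from \cite[Theorem~2.27]{AiharaIyama12}, with no argument supplied. So there is nothing to compare against in the paper itself.

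Your outline is correct and is essentially the standard route (and indeed the one taken by Aihara--Iyama). Surjectivity is routine from $\cc=\thick(\cm)$. For injectivity, the substantive input is exactly what you identified: a silting subcategory of a Krull--Schmidt triangulated category yields a bounded co-$t$-structure with co-heart $\cm$ (this is Proposition~\ref{p:AI's-from-silting-to-co-t-str} in the present paper), and the resulting weight filtration produces a well-defined homomorphism $K_0(\cc)\to K_0^{\mathrm{split}}(\cm)$ inverse to $\phi$. The point that the signed multiplicities of indecomposables in the associated graded are well-defined on $K_0$ is not entirely formal---it is Bondarko's ``weight complex'' argument---so your flagging it as the technical heart is accurate. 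One small remark: rather than constructing the individual functionals $f_N$ directly, it is cleaner to build the single inverse map $K_0(\cc)\to K_0^{\mathrm{split}}(\cm)$ and then read off the $f_N$ as coordinate projections, since $K_0^{\mathrm{split}}(\cm)$ is free on $\ind(\cm)$ by Krull--Schmidt.
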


An object $M$ of $\cc$ is called a \emph{silting object} if $\add M$
is a silting subcategory of $\cc$. This notion was introduced by Keller and Vossieck in~\cite{KellerVossieck88} to study $t$-structures on the bounded derived category of representations over a Dynkin quiver. Recently it has also been studied by Wei~\cite{Wei11} (who uses the terminology \emph{semi-tilting complexes}) from the perspective of classical tilting theory. A \emph{tilting object} is a silting object $M$ such that $\Hom(M,\Sigma^m M)=0$ for $m<0$. For an algebra
$\Lambda$, a tilting object in $K^b(\proj \Lambda)$ is called a
\emph{tilting complex} in the literature. For example, the free
module of rank $1$ is a tilting object in $K^b(\proj \Lambda)$.
Assume that $\Lambda$ is finite-dimensional.
Theorem~\ref{t:silting-have-same-number-of-summands} implies that (a)
any silting subcategory of $K^b(\proj\Lambda)$ is the additive
closure of a silting object, and (b) any two basic silting objects
have the same number of indecomposable direct summands. We will
rederive (b) as a corollary of the existence of a certain
derived equivalence
(Corollary~\ref{c:silting-have-fixed-number-of-summands}).

\subsection{Simple-minded collections}\label{ss:simple-minded-collection}

\begin{definition} A collection $X_1,\ldots,X_r$ of objects of $\cc$ is said to be
\emph{simple-minded} (cohomologically Schurian
in~\cite{Al-Nofayee09}) if the following conditions hold for
$i,j=1,\ldots,r$
\begin{itemize}
\item[$\cdot$] $\Hom(X_i,\Sigma^m X_j)=0,~~\forall~m<0$,
\item[$\cdot$] {$\End(X_i)$ is a division algebra and $\Hom(X_i,X_j)$ vanishes for $i\neq j$,}
\item[$\cdot$] $X_1,\ldots,X_r$ generate
$\cc$ (\ie $\cc=\thick(X_1,\ldots,X_r)$).
\end{itemize}
\end{definition}
Simple-minded collections {are variants of simple-minded systems in~\cite{KoenigLiu10}} and were first studied by
Rickard~\cite{Rickard02} in the context of derived equivalences of
symmetric algebras. For a finite-dimensional algebra $\Lambda$, a
complete collection of {pairwise} non-isomorphic simple modules is a
simple-minded collection in $\cd^b(\mod \Lambda)$. {A natural question is: do any two simple-minded collections have the same collection of endomorphism algebras?}

\subsection{t-structures}\label{ss:t-str}

A \emph{$t$-structure} on
$\cc$ (\cite{BeilinsonBernsteinDeligne82}) is a pair $(\cc^{\leq
0},\cc^{\geq 0})$ of strict (that is, closed under
isomorphisms) and full  subcategories of $\cc$ such that
\begin{itemize}
\item[$\cdot$] $\Sigma\cc^{\leq 0}\subseteq\cc^{\leq 0}$ and
$\Sigma^{-1}\cc^{\geq 0}\subseteq\cc^{\geq 0}$;
\item[$\cdot$] $\Hom(M,\Sigma^{-1}N)=0$ for $M\in\cc^{\leq 0}$
and $N\in\cc^{\geq 0}$,
\item[$\cdot$] for each $M\in\cc$ there is a triangle $M'\rightarrow
M\rightarrow M''\rightarrow\Sigma M'$ in $\cc$ with $M'\in\cc^{\leq
0}$ and $M''\in\Sigma^{-1}\cc^{\geq 0}$.
\end{itemize}
The two subcategories $\cc^{\leq 0}$ and $\cc^{\geq 0}$ are often
called the \emph{aisle} and the \emph{co-aisle} of the $t$-structure
respectively. The \emph{heart} $\cc^{\leq 0}~\cap~\cc^{\geq 0}$ is
always abelian. Moreover, $\Hom(M,\Sigma^m N)$ vanishes for any two
objects $M$ and $N$ in the heart and for any $m<0$. The
$t$-structure $(\cc^{\leq 0},\cc^{\geq 0})$ is said to be
\emph{bounded} if
$$\bigcup_{n\in\mathbb{Z}}\Sigma^n \cc^{\leq
0}=\cc=\bigcup_{n\in\mathbb{Z}}\Sigma^n\cc^{\geq 0}.$$ 
A bounded $t$-structure is one of the two ingredients of a Bridgeland stability condition~\cite{Bridgeland07}. A typical
example of a $t$-structure is the pair $(\cd^{\leq 0},\cd^{\geq 0})$
for the derived category $\cd(\Mod \Lambda)$ of an (ordinary)
algebra $\Lambda$, where $\cd^{\leq 0}$ consists of complexes with
vanishing cohomologies in positive degrees, and $\cd^{\geq 0}$
consists of complexes with vanishing cohomologies in negative
degrees. This $t$-structure restricts to a bounded $t$-structure of
$\cd^b(\mod \Lambda)$ whose heart is $\mod\Lambda$, which is a
\emph{length category}, \ie every object in it has finite length.
The following lemma is well-known.

\begin{lemma}\label{l:heart} Let $(\cc^{\leq 0},\cc^{\geq 0})$ be a bounded $t$-structure on $\cc$ with heart $\ca$.
\begin{itemize}
 \item[(a)] The embedding $\ca\rightarrow\cc$ induces an isomorphism $K_0(\ca)\rightarrow K_0(\cc)$ of Grothendieck groups.
 \item[(b)] $\cc^{\leq 0}$ respectively $\cc^{\geq 0}$ is the extension
closure of $\Sigma^m\ca$ for $m\geq 0$ respectively for $m\leq 0$.
 \item[(c)] $\cc=\thick(\ca)$.
\end{itemize}
Assume further $\ca$ is a length category
with simple objects $\{S_i\mid i\in I\}$.
\begin{itemize}
\item[(d)] $\cc^{\leq 0}$
respectively $\cc^{\geq 0}$ is the extension closure of
$\Sigma^m\{S_i\mid i\in I\}$ for $m\geq 0$ respectively for $m\leq 0$.
\item[(e)] $\cc=\thick(S_i,i\in I)$.
\item[(f)] If $I$ is finite, then $\{S_i\mid i\in I\}$ is a simple-minded collection.
\end{itemize}
\end{lemma}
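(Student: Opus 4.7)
The plan is to use the standard truncation machinery associated to a $t$-structure, combined with induction on cohomological amplitude (which is finite thanks to boundedness), to reduce every statement to a statement about objects of $\ca$ or of the simples $S_i$.

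For part (a), I would construct the inverse map $\varphi\colon K_0(\cc)\to K_0(\ca)$ by sending $[X]$ to the (finite) alternating sum $\sum_{i\in\Z}(-1)^i[H^i(X)]$, where $H^i=\tau_{\geq 0}\tau_{\leq 0}\Sigma^i$ is the cohomology functor associated to the $t$-structure. Well-definedness on $K_0(\cc)$ uses the long exact cohomology sequence attached to a triangle in $\cc$, which gives a bounded exact sequence in $\ca$ whose alternating sum vanishes in $K_0(\ca)$. Checking that both compositions with $K_0(\ca)\to K_0(\cc)$ are the identity is then routine: one way is immediate (objects of $\ca$ have cohomology concentrated in degree $0$); the other follows from a telescoping argument using the truncation triangles.

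For part (b), I would first observe that $\Sigma^m\ca\subseteq\cc^{\leq 0}$ for $m\geq 0$ because $\Sigma\cc^{\leq 0}\subseteq\cc^{\leq 0}$, and that $\cc^{\leq 0}$ is closed under extensions (apply $\Hom(-,\Sigma^{-1}N)$ with $N\in\cc^{\geq 0}$ to an extension triangle and use the vanishing axiom). This gives the easy inclusion. For the other inclusion, take $X\in\cc^{\leq 0}$; by boundedness there exist integers $a\leq b\leq 0$ with $H^i(X)=0$ outside $[a,b]$. Induct on $b-a$: if $b=a$ then $X\cong\Sigma^{-a}H^a(X)$ lies in $\Sigma^{-a}\ca$; otherwise use the truncation triangle
\[
\tau_{\leq b-1}X\longrightarrow X\longrightarrow \Sigma^{-b}H^b(X)\longrightarrow\Sigma\tau_{\leq b-1}X,
\]
whose outer terms lie in the extension closure by induction. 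The dual argument handles $\cc^{\geq 0}$. Part (c) is then immediate: boundedness places every object of $\cc$ into some $\Sigma^{n}\cc^{\leq 0}\cap\Sigma^{-n}\cc^{\geq 0}$, and (b) writes such objects as iterated extensions of shifts of objects of $\ca$, so $\cc\subseteq\thick(\ca)$; the reverse inclusion is trivial.

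For parts (d), (e) and (f), the length hypothesis is used to replace $\ca$ by the simples $\{S_i\}$. Each object of $\ca$ is an iterated extension of the $S_i$, so the extension closure of $\Sigma^m\{S_i\}$ ($m\geq 0$) contains $\Sigma^m\ca$, and (b) gives (d); (e) follows similarly from (c). For (f), generation is (e); the vanishing $\Hom(S_i,\Sigma^m S_j)=0$ for $m<0$ is the general $\Hom$-vanishing inside the heart of a $t$-structure; and the remaining condition is Schur's lemma applied in the length category $\ca$ (endomorphism rings of simples are division rings, and $\Hom$ between non-isomorphic simples vanishes). The only step with any real content is part (a), where one must verify that the cohomological Euler characteristic is a well-defined group homomorphism on $K_0(\cc)$; the rest is a bookkeeping exercise using the truncation triangles and induction on amplitude.
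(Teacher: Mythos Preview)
The paper does not prove this lemma at all: it introduces it with ``The following lemma is well-known'' and gives no argument. Your proof is correct and is precisely the standard one a reader would supply --- truncation triangles plus induction on cohomological amplitude for (b) and (c), the alternating-sum-of-cohomologies inverse for (a), and the obvious reductions to the simples for (d)--(f). There is nothing to compare against; your write-up would serve perfectly well as the omitted proof.
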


\subsection{Co-t-structures}\label{ss:co-t-str}

According to \cite{Pauksztello08}, a \emph{co-$t$-structure} on
$\cc$  (or \emph{weight structure} in~\cite{Bondarko10}) is a pair $(\cc_{\geq 0},\cc_{\leq
0})$ of strict and full subcategories of $\cc$ such that
\begin{itemize}
\item[$\cdot$] both $\cc_{\geq 0}$ and $\cc_{\leq
0}$ are additive and closed under taking direct summands,
\item[$\cdot$] $\Sigma^{-1}\cc_{\geq 0}\subseteq\cc_{\geq 0}$ and
$\Sigma\cc_{\leq 0}\subseteq\cc_{\leq 0}$;
\item[$\cdot$] $\Hom(M,\Sigma N)=0$ for $M\in\cc_{\geq 0}$
and $N\in\cc_{\leq 0}$,
\item[$\cdot$] for each $M\in\cc$ there is a triangle $M'\rightarrow
M\rightarrow M''\rightarrow\Sigma M'$ in $\cc$ with $M'\in\cc_{\geq
0}$ and $M''\in\Sigma\cc_{\leq 0}$.
\end{itemize}
The \emph{co-heart} is defined as the intersection $\cc_{\geq
0}~\cap~\cc_{\leq 0}$. This is usually not an abelian category. For
any two objects $M$ and $N$ in the co-heart, the morphism space $\Hom(M,\Sigma^m
N)$ vanishes for any $m>0$. The co-$t$-structure $(\cc^{\leq 0},\cc^{\geq
0})$ is said to be \emph{bounded}~\cite{Bondarko10} if
$$\bigcup_{n\in\mathbb{Z}}\Sigma^n \cc_{\leq
0}=\cc=\bigcup_{n\in\mathbb{Z}}\Sigma^n\cc_{\geq 0}.$$ 
A bounded co-$t$-structure is one of the two ingredients of a J{\o}rgensen--Pauksztello costability condition~\cite{JorgensenPauksztello11}.
A typical
example of a co-$t$-structure is the pair $(K_{\geq 0},K_{\leq 0})$
for the homotopy category $K^b(\proj \Lambda)$ of a
finite-dimensional algebra $\Lambda$, where $K_{\geq 0}$ consists of
complexes which are homotopy equivalent to a complex bounded below at
$0$, and $K_{\leq 0}$ consists of complexes which are homotopy
equivalent to a complex bounded above at $0$. The co-heart of this
co-$t$-structure is $\proj\Lambda$.

\begin{lemma}\label{l:co-heart}\emph{{(\cite[Theorem 4.10 (a)]{MendozaSaenzSantiagoSouto10})}}
Let $(\cc_{\geq 0},\cc_{\leq 0})$ be a bounded co-$t$-structure on
$\cc$ with co-heart $\ca$. Then $\ca$ is a silting subcategory of
$\cc$.
\end{lemma}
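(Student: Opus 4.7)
The plan is to verify the three conditions from Section~\ref{ss:silting-obj} that define a silting subcategory: closure of $\ca$ under direct summands, the vanishing $\Hom(M,\Sigma^m N)=0$ for $M,N\in\ca$ and $m>0$, and the generation condition $\cc=\thick(\ca)$. The first two are immediate from the axioms: closure under direct summands is inherited by the intersection $\cc_{\geq 0}\cap\cc_{\leq 0}$ from its two factors, while iterating $\Sigma\cc_{\leq 0}\subseteq\cc_{\leq 0}$ places $\Sigma^m N$ in $\Sigma\cc_{\leq 0}$ for $m\geq 1$, so that the orthogonality axiom yields the required vanishing.

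The substantive part is $\cc=\thick(\ca)$. Two preliminary observations are needed. First, for every $n\in\Z$ the shifted pair $(\Sigma^n\cc_{\geq 0},\Sigma^n\cc_{\leq 0})$ is again a co-$t$-structure on $\cc$, so a truncation triangle is available at every level. Second, both $\cc_{\geq 0}$ and $\cc_{\leq 0}$ (hence all their shifts) are closed under extensions; this is deduced by truncating the middle term of an extension and using orthogonality together with closure under direct summands. Now fix $M\in\cc$. By boundedness there are integers $p,q$ with $M\in\Sigma^p\cc_{\geq 0}\cap\Sigma^{-q}\cc_{\leq 0}$, and after replacing $M$ by a suitable suspension I may assume $M\in\cc_{\geq 0}\cap\Sigma^{-n}\cc_{\leq 0}$ for some $n\geq 0$. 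The assertion $M\in\thick(\ca)$ is then proved by induction on $n$, with base $n=0$ reducing to $M\in\ca$.

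For the inductive step I apply the truncation triangle of the shifted co-$t$-structure $(\Sigma^{-1}\cc_{\geq 0},\Sigma^{-1}\cc_{\leq 0})$ to $M$, obtaining a distinguished triangle
\[
M'\longrightarrow M\longrightarrow M''\longrightarrow\Sigma M'
\]
with $M'\in\Sigma^{-1}\cc_{\geq 0}$ and $M''\in\Sigma(\Sigma^{-1}\cc_{\leq 0})=\cc_{\leq 0}$. The rotated triangle $M\to M''\to\Sigma M'\to\Sigma M$, combined with $M\in\cc_{\geq 0}$, $\Sigma M'\in\cc_{\geq 0}$, and extension-closure of $\cc_{\geq 0}$, forces $M''\in\cc_{\geq 0}$, so that $M''\in\ca$. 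Similarly, the rotated triangle $\Sigma^{-1}M''\to M'\to M\to M''$ together with the inclusion $\Sigma^{-1}\cc_{\leq 0}\subseteq\Sigma^{-n}\cc_{\leq 0}$ (valid since $n\geq 1$) and extension-closure of $\Sigma^{-n}\cc_{\leq 0}$ places $M'$ in $\Sigma^{-n}\cc_{\leq 0}$, whence $\Sigma M'\in\cc_{\geq 0}\cap\Sigma^{-(n-1)}\cc_{\leq 0}$. The inductive hypothesis gives $\Sigma M'\in\thick(\ca)$, hence $M'\in\thick(\ca)$, and the triangle then yields $M\in\thick(\ca)$.

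The step I expect to be the main obstacle is the bookkeeping that locates each truncation piece in the correct shifted subcategory. In particular, the conclusion $M''\in\cc_{\geq 0}$, which is what places $M''$ in the co-heart rather than in a strictly larger shift of $\cc_{\leq 0}$, depends crucially on the extension-closure of $\cc_{\geq 0}$, and this is itself a standard but nontrivial consequence of the axioms (one compares a hypothetical extension with its truncation via orthogonality and then exploits direct-summand closure). Once these closure properties are in place, the remainder of the argument is a formal manipulation of distinguished triangles and shifts.
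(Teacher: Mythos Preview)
Your proof is correct and follows essentially the same approach as the paper's: normalise via boundedness, truncate at the shifted level $(\Sigma^{-1}\cc_{\geq 0},\Sigma^{-1}\cc_{\leq 0})$, use extension-closure to place $M''$ in the co-heart and to control the width of $\Sigma M'$, then induct. The only difference is that where the paper cites \cite[Propositions~1.3.3.3 and~1.3.3.6]{Bondarko10} for extension-closure and for $M''\in\ca$, you supply these arguments directly, making your version slightly more self-contained.
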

\begin{proof} {For the convenience of the reader we give a proof.} It suffices to show that $\cc=\thick(\ca)$.
Let $M$ be an object of $\cc$. Since the co-$t$-structure is
bounded, there are integers $m\geq n$ such that
$M\in\Sigma^m\cc_{\geq 0}\cap\Sigma^n\cc_{\leq 0}$. Up to suspension
and cosuspension we may assume that $m=0$. If $n=0$, then $M\in\ca$.
Suppose $n<0$. There exists a triangle
\[\xymatrix{M'\ar[r]&M\ar[r]&M''\ar[r]&\Sigma M'}\]
with $M'\in\Sigma^{-1}\cc_{\geq 0}$ and $M''\in\cc_{\leq 0}$. In
fact, $M''\in\ca$, see~\cite[Proposition 1.3.3.6]{Bondarko10}.
Moreover, $\Sigma M'\in\Sigma^{n+1}\cc_{\leq 0}$ due to the triangle
\[\xymatrix{M''\ar[r]&\Sigma M'\ar[r]&\Sigma M\ar[r]&\Sigma M''}\]
since both $M''$ and $\Sigma M$ belong to $\Sigma^{n+1}\cc_{\leq 0}$
and $\cc_{\leq 0}$ is extension closed (see~\cite[Proposition
1.3.3.3]{Bondarko10}). So $\Sigma M'\in\cc_{\geq
0}\cup\Sigma^{n+1}\cc_{\leq 0}$. We finish the proof by induction on
$n$.
\end{proof}

\begin{proposition}\label{p:AI's-from-silting-to-co-t-str}
\emph{(\cite[Proposition 2.22]{AiharaIyama12}, \cite[(proof
of) Theorem 4.3.2]{Bondarko10},~\cite[Theorem 5.5]{MendozaSaenzSantiagoSouto10} and~\cite{KellerNicolas11})}
Let $\ca$ be a silting subcategory of $\cc$. Let $\cc_{\leq 0}$
respectively $\cc_{\geq 0}$ be the extension closure of
$\Sigma^m\ca$ for $m\geq 0$ respectively for $m\leq 0$. Then
$(\cc_{\geq 0},\cc_{\leq 0})$ is a bounded co-$t$-structure on $\cc$
with co-heart $\ca$.
\end{proposition}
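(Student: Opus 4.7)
The plan is to verify the four axioms of a co-$t$-structure for $(\cc_{\geq 0},\cc_{\leq 0})$, boundedness, and the co-heart claim in turn. The shift-stability $\Sigma^{-1}\cc_{\geq 0}\subseteq\cc_{\geq 0}$ and $\Sigma\cc_{\leq 0}\subseteq\cc_{\leq 0}$ is immediate, since $\bigcup_{m\leq 0}\Sigma^m\ca$ and $\bigcup_{m\geq 0}\Sigma^m\ca$ are closed under the respective shifts and extension closure commutes with them. For the Hom-vanishing $\Hom(M,\Sigma N)=0$ with $M\in\cc_{\geq 0}$ and $N\in\cc_{\leq 0}$, one first checks it on generators: for $A,B\in\ca$, $p\leq 0$ and $q\geq 0$, the group $\Hom(\Sigma^p A,\Sigma^{q+1}B)=\Hom(A,\Sigma^{q+1-p}B)$ vanishes since $q+1-p\geq 1$ and $\ca$ is silting. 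A double induction on the lengths of extension-filtrations, via the long exact sequences of Hom attached to the defining triangles, then lifts this to arbitrary $M$ and $N$.

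The core of the proof is the construction, for each $M\in\cc$, of a decomposition triangle $M'\to M\to M''\to\Sigma M'$ with $M'\in\cc_{\geq 0}$ and $M''\in\Sigma\cc_{\leq 0}$. Since $\cc=\thick(\ca)$, the idea is to establish the decomposition first for an object $X$ carrying a finite filtration $X_0\to X_1\to\cdots\to X_r=X$ with subquotients in $\bigcup_{n\in\Z}\add(\Sigma^n\ca)$, by induction on $r$, and then to pass to a summand $M$ via Krull--Schmidt. In the inductive step one has a triangle $X_{r-1}\to X\to W\to\Sigma X_{r-1}$ with $W\in\add(\Sigma^n\ca)$ and splits on the sign of $n$. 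If $n\leq 0$ so $\Sigma^{-1}W\in\cc_{\geq 0}$, the Hom-vanishing gives $\Hom(\Sigma^{-1}W,X_{r-1}'')=0$; hence $\Sigma^{-1}W\to X_{r-1}$ lifts through $X_{r-1}'\to X_{r-1}$, and the octahedral axiom applied to $\Sigma^{-1}W\to X_{r-1}'\to X_{r-1}$ produces a triangle $P\to X\to X_{r-1}''\to\Sigma P$ in which $P$ sits in $X_{r-1}'\to P\to W\to\Sigma X_{r-1}'$, so $P\in\cc_{\geq 0}$. If $n\geq 1$ so $W\in\Sigma\cc_{\leq 0}$, the octahedral axiom applied to $X_{r-1}'\to X_{r-1}\to X$ yields $X_{r-1}'\to X\to N\to\Sigma X_{r-1}'$ with $N$ sitting in $X_{r-1}''\to N\to W\to\Sigma X_{r-1}''$, so $N\in\Sigma\cc_{\leq 0}$.

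The remaining steps are formal. Transferring the decomposition from $X$ to a summand $M$, together with the summand-closure of $\cc_{\geq 0}$ and $\cc_{\leq 0}$, will follow from Hom-vanishing plus uniqueness of the decomposition triangle, which allows an idempotent on $M$ to split off compatible idempotents on $M'$ and $M''$. Boundedness is immediate since every object of an extension closure of $\bigcup_m\Sigma^m\ca$ lives in a finite union of $\Sigma^n\ca$. For the co-heart, the inclusion $\ca\subseteq\cc_{\geq 0}\cap\cc_{\leq 0}$ is trivial; the converse is then a standard argument applying the decomposition triangle together with the Hom-vanishing to any $M$ in the intersection. The principal obstacle is the inductive octahedron bookkeeping: verifying that the cones really do land in $\cc_{\geq 0}$ and $\Sigma\cc_{\leq 0}$, with the Hom-vanishing from the second step serving as the decisive lifting tool throughout.
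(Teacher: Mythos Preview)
The paper does not give its own proof of this proposition; it is stated with attribution to \cite{AiharaIyama12,Bondarko10,MendozaSaenzSantiagoSouto10,KellerNicolas11}. The only related argument appearing in the paper is the special case Lemma~\ref{l:standard-co-t-str} for $\per(A)$ with $A$ a finite-dimensional non-positive dg algebra, and that proof is quite different from yours: it replaces an object by a minimal strictly perfect dg module and truncates it explicitly at degree~$0$, so no octahedral bookkeeping or summand argument is needed.

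Your general approach (Hom-vanishing on generators plus induction, then the octahedral induction for the decomposition triangle) is the standard one in the cited references, and those parts are fine. There is, however, a genuine gap in the passage to summands. You write that it ``will follow from Hom-vanishing plus uniqueness of the decomposition triangle, which allows an idempotent on $M$ to split off compatible idempotents on $M'$ and $M''$.'' This is the $t$-structure reflex, and it fails for co-$t$-structures: the vanishing you have is $\Hom(\cc_{\geq 0},\Sigma\cc_{\leq 0})=0$, \emph{not} $\Hom(\Sigma\cc_{\leq 0},\cc_{\geq 0})=0$, so the decomposition triangle is neither unique nor functorial. Concretely, an idempotent $e$ on $X$ does lift to some $e'\colon X'\to X'$ (because the composite $X'\to X\xrightarrow{e}X\to X''$ vanishes), but the lift is not unique---the ambiguity lives in $\Hom(X',\Sigma^{-1}X'')\subseteq\Hom(\cc_{\geq 0},\cc_{\leq 0})$, which is typically nonzero---so you cannot conclude that $e'$ is idempotent. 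You also invoke Krull--Schmidt, which is not assumed in this section of the paper.

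What is actually needed is a direct argument that the class $\cc_{\geq 0}*\Sigma\cc_{\leq 0}$ is closed under direct summands (and, separately, that $\cc_{\geq 0}$ and $\cc_{\leq 0}$ themselves are; note that the paper's special case Lemma~\ref{l:standard-co-t-str} builds summand-closure into the definition of $\cp_{\geq 0},\cp_{\leq 0}$). One route, following Bondarko, is to show that after establishing the decomposition on iterated extensions one has $\cc_{\geq 0}={}^{\perp}(\Sigma\cc_{\leq 0})$ inside the bounded part of $\cc$, which is automatically summand-closed; another is the direct argument in \cite[Proposition~2.23]{AiharaIyama12}. Either replaces your idempotent-lifting step.
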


\section{Finite-dimensional  non-positive dg algebras}\label{a:fin-dim-negative-dg} \label{section-dgalgebras}
In this section we study derived categories of \emph{non-positive dg algebras}, \ie dg algebras $A=\bigoplus_{i\in\mathbb{Z}}A^i$ with $A^i=0$ for $i>0$, especially finite-dimensional non-positive dg algebras, \ie , {non-positive dg algebras which, as vector spaces, are finite-dimensional. These results will be used in Sections~\ref{ss:silting-der-equiv} and \ref{s:from-silting}.}

\medskip
Non-positive dg
algebras are closely related to silting objects.
A triangulated category is said to be \emph{algebraic} if it is triangle equivalent to the stable category of a Frobenius category.

\begin{lemma}\label{l:negative-dg-and-silting}
\begin{itemize}
\item[(a)]
 Let $A$ be a non-positive dg algebra. The free dg $A$-module of rank $1$ is a silting object of $\per(A)$.
\item[(b{)}]
 Let $\cc$ be an algebraic triangulated category with split idempotents
and let $M\in\cc$ be a silting object. Then there is a non-positive dg
algebra $A$ together with a triangle equivalence
$\per(A)\stackrel{\sim}{\rightarrow} \cc$ which takes $A$ to $M$.
\end{itemize}
\end{lemma}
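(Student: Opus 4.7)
Part (a) follows almost immediately from $\ck$-projectivity of $A_A$ together with non-positivity of $A$. I would verify the three conditions for $\add(A_A)$ to be a silting subcategory of $\per(A)$: closure under direct summands is built into the definition of $\add$; generation is immediate, since $\per(A) = \thick(A_A) = \thick(\add(A_A))$; and the Hom-vanishing follows from the chain
\[
\Hom_{\cd(A)}(A, \Sigma^m A) = H^0\cHom_A(A, \Sigma^m A) = H^m(A) = 0 \quad \text{for } m > 0,
\]
obtained by combining \eqref{eq:cofibrant}, \eqref{eq:morphism-homotopy-cat} and non-positivity of $A$.

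For part (b), the plan is to realize $\cc$ as $\per(A)$ for a suitable dg endomorphism algebra of $M$, then truncate to achieve non-positivity. Since $\cc$ is algebraic, I would choose a Frobenius category $\ce$ with $\underline{\ce} \simeq \cc$ together with its natural dg enhancement, and lift $M$ to a $\ck$-projective object $\widetilde{M} \in \ce$. Then the dg endomorphism algebra
\[
E := \cHom(\widetilde{M}, \widetilde{M})
\]
satisfies $H^m(E) \cong \Hom_{\cc}(M, \Sigma^m M)$, so the silting hypothesis forces $H^m(E) = 0$ for all $m > 0$. Consequently the intelligent truncation $A := \tau_{\leq 0}(E) \hookrightarrow E$ is a quasi-isomorphism of dg algebras, and $A$ is non-positive by construction. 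Keller's dg realization theorem (\cite{Keller94}) then supplies a triangle equivalence $\thick(M) \iso \per(E)$ sending $M$ to $E$. Combined with $\thick(M) = \cc$ (which holds since $M$ is silting) and the triangle equivalence $\per(A) \iso \per(E)$ induced by the quasi-isomorphism $A \to E$, this yields the desired equivalence $\per(A) \iso \cc$ sending $A$ to $M$.

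The main obstacle is the construction of the dg enhancement and of $E$ as an honest dg algebra — not merely an $A_\infty$-algebra — representing the derived endomorphism algebra of $M$. This rests on the standard machinery of Frobenius-category enhancements combined with cofibrant (equivalently, $\ck$-projective) replacements of $M$, which I would invoke from the literature rather than reproduce. Once the enhancement is in place, the truncation step and Keller's equivalence are formal, and the remaining identifications are routine.
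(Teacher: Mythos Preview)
Your proposal is correct and follows essentially the same approach as the paper: for (a) the Hom-vanishing $\Hom_{\per(A)}(A,\Sigma^m A)=H^m(A)=0$ for $m>0$ is exactly the paper's argument, and for (b) both you and the paper realize $\cc$ as $\per$ of a dg endomorphism-type algebra, observe that silting forces vanishing positive cohomology, and then pass to the standard truncation $\tau_{\leq 0}$ via the resulting quasi-isomorphism. The only difference is packaging: the paper invokes \cite[Theorem~3.8~b)]{Keller06d} (a classically-generated version of \cite[Theorem~4.3]{Keller94}) as a black box to obtain the dg algebra $A'$ and the equivalence $\per(A')\stackrel{\sim}{\to}\cc$ directly, whereas you unpack this by choosing a Frobenius model and a $\ck$-projective lift of $M$ to form $E=\cHom(\widetilde{M},\widetilde{M})$ --- which is precisely the construction underlying Keller's theorem.
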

\begin{proof} (a)
 This is because $\Hom_{\per(A)}(A,\Sigma^i A)=H^i(A)$ vanishes for
 $i>0$.

 (b) By~\cite[Theorem 3.8 b)]{Keller06d} (which is a `classically generated' version of~\cite[Theorem 4.3]{Keller94}), there is a dg algebra $A'$ together with a triangle equivalence
$\per(A')\stackrel{\sim}{\rightarrow}\cc$. In particular, there are isomorphisms $\Hom_{\per(A')}(A',\Sigma^i A')\cong\Hom_{\cc}(M,\Sigma^i M)$
for all $i\in\mathbb{Z}$. Since $M$ is a silting object, $A'$ has vanishing cohomologies in positive degrees. Therefore, if $A=\tau_{\leq 0}A'$
is the standard truncation at position $0$, then the embedding $A\hookrightarrow A'$ is a quasi-isomorphism. {It follows that} there is a composite
triangle equivalence
\[\xymatrix{\per(A)\ar[r]^{\sim}&\per(A')\ar[r]^(0.6){\sim}&\cc}\]
which takes $A$ to $M$.
\end{proof}

In the sequel of this section we assume that $A$ is a
finite-dimensional non-positive dg algebra.
 The 0-th cohomology
$\bar{A}=H^{0}(A)$ of $A$ is a finite-dimensional $K$-algebra. Let $\Mod
\bar{A}$ and $\mod\bar{A}$ denote the category of (right) modules
over $\bar{A}$ and its subcategory consisting of those
finite-dimensional modules. Let $\pi:A\rightarrow \bar{A}$ be the
canonical projection. We view $\Mod \bar{A}$ as a subcategory of
$\cc(A)$ via $\pi$.

The total cohomology $H^{*}(A)$ of $A$ is a finite-dimensional
graded algebra with multiplication induced from the multiplication
of $A$. Let $M$ be a dg $A$-module. Then the total cohomology
$H^{*}(M)$ carries a graded $H^{*}(A)$-module structure, and hence a
graded $\bar{A}=H^{0}(A)$-module structure. In particular, a stalk
dg $A$-module concentrated in degree 0 is an $\bar{A}$-module.

\subsection{The standard $t$-structure}\label{s:standard-t-structure} We follow~\cite{FuMasterThesis,Amiot09,KellerYang11}, where the dg algebra is not necessarily
finite-dimensional.

Let $M=\ldots\rightarrow M^{i-1}\stackrel{d^{i-1}}{\rightarrow}
M^i\stackrel{d^i}{\rightarrow}M^{i+1}\rightarrow\ldots$ be a dg
$A$-module. Consider the standard truncation functors $\tau_{\leq
0}$ and $\tau_{> 0}$:
\[\xymatrix@R=1pc{\tau_{\leq 0}M =\\ \tau_{> 0}M=}
\xymatrix@C=1pc@R=1pc{  \ldots \ar[r] &
M^{-2 }\ar[r]^{d^{-2}}& M^{-1}\ar[r]^{d^{-1}}&\mathrm{ker}d^{0}\ar[r]&
0\ar[r]&0\ar[r]&0\ar[r]&\ldots\\
\ldots\ar[r]& 0\ar[r]& 0\ar[r]&
M^0/\mathrm{ker} d^0\ar[r]^(0.65){d^0}&M^{1}\ar[r]^{d^{1}}&M^2\ar[r]^{d^2}&M^3\ar[r]&\ldots}
\]
Since $A$ is non-positive, $\tau_{\leq 0}M$ is a dg $A$-submodule of
$M$ and $\tau_{>0}M$ is the corresponding quotient dg $A$-module.
Hence there is a distinguished triangle in $\mathcal{D}(A)$
\[\tau_{\leq 0}M\rightarrow M\rightarrow \tau_{>0}M\rightarrow
\Sigma\tau_{\leq 0}M.\] These two functors define a $t$-structure
$(\mathcal{D}^{\leq 0},\mathcal{D}^{\geq 0})$ on $\mathcal{D}(A)$,
where $\mathcal{D}^{\leq 0}$ is the subcategory of $\mathcal{D}(A)$
consisting of dg $A$-modules with vanishing cohomology in positive
degrees, and $\mathcal{D}^{\geq 0}$ is the subcategory of
$\mathcal{D}(A)$ consisting of dg $A$-modules with vanishing
cohomology in negative degrees.

By the definition of the $t$-structure
$(\mathcal{D}^{\leq 0},\mathcal{D}^{\geq 0})$, the heart
$\mathcal{H}=\mathcal{D}^{\leq 0}\cap\mathcal{D}^{\geq 0}$ consists
of those dg $A$-modules whose cohomology is concentrated in degree
0. Thus the functor $H^0$ induces an equivalence
\begin{eqnarray*}H^0:\mathcal{H}&\longrightarrow&\Mod\bar{A}.\\
M&\mapsto& H^{0}(M)
\end{eqnarray*}
See also~\cite[Theorem 1.3]{HoshinoKatoMiyachi02}. The $t$-structure
$(\mathcal{D}^{\leq 0},\mathcal{D}^{\geq 0})$ on $\mathcal{D}(A)$
restricts to a bounded $t$-structure on $\mathcal{D}_{fd}(A)$ with
heart equivalent to $\mod\bar{A}$.


\subsection{Morita reduction}
 Let $d$ be the differential of $A$. Then $d(A^0)=0$.

 Let $e$ be an idempotent of $A$. For degree reasons, $e$ must belong to
 $A^0$, and the graded
 subspace $eA$ of $A$ is a dg submodule: $d(ea)=d(e)a+ed(a)=ed(a)$.
 Therefore for each
 decomposition $1=e_1+\ldots+e_n$ of the unity into a sum of primitive
 orthogonal idempotents, there is a direct sum decomposition
 $A=e_1A\oplus\ldots\oplus e_nA$ of $A$ into indecomposable dg
 $A$-modules.
 Moreover, if $e$ and $e'$ are two idempotents of $A$ such that
 $eA\cong e'A$ as ordinary modules over the ordinary algebra $A$,
 then this isomorphism is also an isomorphism of dg modules. Indeed,
 there are two elements of $A$ such that $fg=e$ and $gf=e'$. Again
 for degree reasons, $f$ and $g$ belong to $A^0$. So they induce
 isomorphisms of dg $A$-modules: $eA\rightarrow e'A$, $a\mapsto ga$
 and $e'A\rightarrow eA$, $a\mapsto fa$.  It follows that the above
 decomposition of $A$ into a direct sum of
indecomposable dg modules is essentially unique. Namely, if
 $1=e'_1+\ldots+e'_n$ is another decomposition of the unity into a
 sum of primitive orthogonal idempotents, then $m=n$ and up to
 reordering, $e_1A\cong e'_1A$, $\ldots$, $e_nA\cong e'_nA$.

\subsection{The perfect derived category}\label{ss:perfect-derived-cat}

Since $A$ is finite-dimensional (and thus has finite-dimensional total
cohomology), $\per(A)$ is a triangulated
subcategory of $\mathcal{D}_{fd}(A)$.

We assume, as we may, that $A$ is basic. Let $1=e_1+\ldots+e_n$ be a
decomposition of $1$ in $A$ into a sum of primitive orthogonal
idempotents. Since $d(x)=\lambda_1 e_{i_1}+\ldots+\lambda_s e_{i_s}$
implies that $d(e_{i_j}x)=\lambda_j e_{i_j}$, the
intersection of the space spanned by $e_1,\ldots,e_n$ with the image
of the differential $d$ has a basis consisting of some $e_i$'s, say
$e_{r+1},\ldots,e_n$. So, $e_{r+1} A,\ldots,e_n
A$ are homotopic to zero.

We say that a dg $A$-module $M$ is \emph{strictly perfect} if its underlying graded module is of the form $\bigoplus_{j=1}^{N}R_j$, where $R_j$ belongs to $\add(\Sigma^{t_j}A)$ for some $t_j$ with $t_1<t_2<\ldots<t_N$, and if its differential is of the form $d_{int}+\delta$, where $d_{int}$ is the direct sum of the differential of the $R_j$'s, and $\delta$, as a degree $1$ map from $\bigoplus_{j=1}^{N}R_j$ to itself, is a strictly upper triangular matrix whose entries are in $A$. It is \emph{minimal} if in addition no shifted copy of $e_{r+1}A,\ldots, e_n A$ belongs to $\add(R_1,\ldots,R_j)$, and the entries of $\delta$ are in the radical of $A$, \confer~\cite[Section 2.8]{Plamondon11}. Strictly perfect dg modules are $\ck$-projective. If $A$ is an ordinary algebra, then strictly perfect dg modules are precisely bounded complexes of finitely generated projective modules.


\begin{lemma}\label{l:minimal-perfect-resolution} Let $M$ be a
dg $A$-module belonging to $\per(A)$. Then $M$ is quasi-isomorphic
to a minimal strictly perfect dg $A$-module.
\end{lemma}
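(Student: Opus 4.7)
The plan is to construct a minimal strictly perfect replacement $P$ of $M$ together with a quasi-isomorphism $P\to M$ by successive approximation, working downwards from the top cohomological degree of $M$ and imitating the construction of a minimal projective resolution over $\bar A$, carried out one layer at a time as dg modules.

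Since $A$ is finite-dimensional non-positive and $M\in\per(A)$, the total cohomology $H^{*}(M)$ is finite-dimensional and concentrated in some bounded range $[m_{-},m_{+}]$. The $\bar A$-module $H^{m_{+}}(M)$ admits a projective cover $p\colon \bar R_{1}\twoheadrightarrow H^{m_{+}}(M)$ in $\mod\bar A$, where $\bar R_{1}=\bigoplus_{j}\bar e_{i_{j}}\bar A$ with all $i_{j}\leq r$, since $\bar e_{i}\bar A=H^{0}(e_{i}A)=0$ for $i>r$ (the dg modules $e_{i}A$ being homotopic to zero there). Setting $R_{1}=\bigoplus_{j}e_{i_{j}}A$, the dg module $\Sigma^{-m_{+}}R_{1}$ is $\ck$-projective and satisfies $H^{m_{+}}(\Sigma^{-m_{+}}R_{1})=\bar R_{1}$, so $p$ lifts to a genuine chain map $f_{1}\colon \Sigma^{-m_{+}}R_{1}\to M$. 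Completing to a triangle $M_{1}\to\Sigma^{-m_{+}}R_{1}\xrightarrow{f_{1}}M\to\Sigma M_{1}$ and using non-positivity of $A$ (so $H^{i}(\Sigma^{-m_{+}}R_{1})=0$ for $i>m_{+}$), the long exact sequence gives $H^{i}(M_{1})=0$ for $i>m_{+}$ and $H^{m_{+}}(M_{1})=\ker p$, which lies in the radical of $\bar R_{1}$.

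Now iterate: first repeat the procedure at degree $m_{+}$ until $H^{m_{+}}$ is exhausted (this terminates because each step strictly reduces a finite-dimensional space), then descend to degree $m_{+}-1$, and so on down to $m_{-}$. Gathering the contributions at each shift $-d$ into a single object of $\add(\Sigma^{-d}A)$ and combining the iterated cones into one twisted complex yields a strictly perfect dg module $P=\bigoplus_{j=1}^{N}R_{j}$ with $R_{j}\in\add(\Sigma^{t_{j}}A)$ and $t_{1}<t_{2}<\cdots<t_{N}$; strict upper-triangularity of $\delta$ is forced by the filtration from the iterated triangles (a component $R_{j}\to R_{k}$ is degree $1$ and requires an element of $A^{t_{j}-t_{k}+1}$, which is zero unless $t_{k}>t_{j}$). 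The partial approximations glue to a quasi-isomorphism $P\to M$. Minimality is built in: no shifted copy of $e_{i}A$ with $i>r$ is ever used, and because we systematically choose projective covers rather than arbitrary surjections, every entry of $\delta$ has coefficients in the radical of $A$. The step I expect to require the most care is the bookkeeping in the iteration: verifying that, when descending from degree $m_{+}$ to $m_{+}-1$ (and lower), the previously constructed layers are not disturbed, and that the family $\{f_{i}\}$ assembles into a single chain map $P\to M$ inducing isomorphisms in cohomology in every degree.
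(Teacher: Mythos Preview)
There is a genuine gap in your termination argument. The assertion $H^{m_{+}}(M_{1})=\ker p$ is incorrect: the long exact sequence of the triangle $M_{1}\to\Sigma^{-m_{+}}R_{1}\to M\to\Sigma M_{1}$ reads
\[
\cdots\longrightarrow H^{m_{+}-1}(M)\longrightarrow H^{m_{+}}(M_{1})\longrightarrow \bar R_{1}\xrightarrow{\ p\ } H^{m_{+}}(M)\longrightarrow 0,
\]
so $H^{m_{+}}(M_{1})$ surjects onto $\ker p$ but also receives a generally nonzero contribution from $H^{m_{+}-1}(M)$ through the connecting map. For $A=\bar A=K$ and $M=K\oplus\Sigma K$ one has $\ker p=0$ yet $H^{0}(M_{1})\cong K$; for $A=K[x]/(x^{2})$ and $M=(P\xrightarrow{x}P)$ one finds $\dim H^{0}(M_{1})=2>1=\dim H^{0}(M)$. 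Hence ``each step strictly reduces a finite-dimensional space'' fails, and you have not shown that the iteration at degree $m_{+}$ terminates.

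Termination is exactly where the hypothesis $M\in\per(A)$ must enter nontrivially: run your procedure on a simple $\bar A$-module of infinite projective dimension and it never stops, so no purely cohomological bookkeeping can suffice. The paper's proof does not attempt a direct construction; it cites \cite[Lemma~2.14]{Plamondon11}, whose argument goes the other way round: one first obtains \emph{some} strictly perfect model of $M$ from $M\in\thick(A)$, and then reduces it to a minimal one. The two facts the paper singles out --- that $e_{1}A,\ldots,e_{r}A$ have local endomorphism algebras and that $e_{r+1}A,\ldots,e_{n}A$ are null-homotopic --- are precisely the inputs for this reduction: the contractible summands are discarded, and then any entry of $\delta$ not lying in the radical is an isomorphism between indecomposable pieces and may be cancelled. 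If you prefer to repair the inductive route, you will need a different complexity measure (for instance, the minimal number of cones exhibiting $M$ in $\thick(A)$), not the dimension of a single cohomology group.
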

\begin{proof} Bearing in mind that $e_1 A,\ldots, e_r A$ have local
 endomorphism algebras and $e_{r+1}A,\ldots, e_n A$ are homotopic to zero,
 we prove the assertion as in~\cite[Lemma 2.14]{Plamondon11}. \end{proof}

\subsection{Simple modules}\label{ss:simpledg} Assume
that $A$ is basic. 
According to the preceding subsection, we may assume that there is a
decomposition $1=e_1+\ldots+e_r+e_{r+1}+\ldots+e_n$ of the unity of
$A$ into a sum of primitive orthogonal idempotents such that
$1=\bar{e}_1+\ldots+\bar{e}_r$ is a decomposition of $1$ in
$\bar{A}$ into a sum of primitive orthogonal idempotents.

Let $S_1,\ldots,S_r$ be a complete set of pairwise non-isomorphic simple
$\bar{A}$-modules {and let $R_1,\ldots,R_r$ be their endomorphism algebras}. Then
\[\mathcal{H}om_{A}(e_i A,S_{j})=\begin{cases}  {}_{R_j}R_j & \text{if }i=j,\\
                                           0 & \text{otherwise}.
                                           \end{cases}\]
Therefore, by (\ref{eq:morphism-homotopy-cat}) and (\ref{eq:cofibrant}),
\[\Hom_{\cd(A)}(e_i A,\Sigma^m S_j)=\begin{cases} {{}_{R_j}R_j} & \text{ if } i=j \text{ and } m=0,\\ 0 & \text{ otherwise}.\end{cases}\]
Moreover, $\{e_1 A,\ldots,e_rA\}$ and $\{S_1,\ldots,S_r\}$
characterise each other by this property. On the one hand, if $M$ is
a dg $A$-module such that for some integer $1\leq j\leq r$
\[\Hom_{\cd(A)}(e_i A,\Sigma^m M)=\begin{cases} {{}_{R_j}R_j} & \text{ if } i=j \text{ and } m=0,\\ 0 & \text{ otherwise},\end{cases}\]
 then $M$ is isomorphic in $\cd(A)$ to $S_j$.
On the other hand, let $M$ be an object of $\per(A)$ such that for
some integer $1\leq i\leq r$
\[\Hom_{\cd(A)}(M,\Sigma^m S_j)=\begin{cases} {{}_{R_j}R_j} & \text{ if } i=j \text{ and } m=0,\\ 0 & \text{ otherwise}.\end{cases}\]
Then by replacing $M$ by its minimal perfect resolution
(Lemma~\ref{l:minimal-perfect-resolution}), we see that $M$ is
isomorphic in $\cd(A)$ to $e_i A$.

Further, recall from Section~\ref{s:standard-t-structure} that
$\cd_{fd}(A)$ admits a standard $t$-structure whose heart is
equivalent to $\mod\bar{A}$. This implies that the simple modules
$S_1,\ldots,S_r$ form a simple-minded collection in $\cd_{fd}(A)$.

\subsection{The Nakayama functor}

For a complex $M$ of $K$-vector spaces, we define its dual as
$\mathrm{D}(M)=\mathcal{H}om_{K}(M,K)$, where $K$ in the second
argument is considered as a complex concentrated in degree 0. One
checks that $\mathrm{D}$ defines a duality between
finite-dimensional dg $A$-modules and finite-dimensional dg
$A^{op}$-modules.

Let $e$ be an idempotent of $A$ and $M$ a dg $A$-module. Then
there is a canonical isomorphism
\[\mathcal{H}om_{A}(eA,M)\cong Me.\]
If in addition each component of $M$ is finite-dimensional , there are
canonical isomorphisms
\[\mathcal{H}om_{A}(eA,M)\cong Me\cong \mathrm{D}\mathcal{H}om_{A}(M,\mathrm{D}(Ae)).\]

Let $\cc(A)$ denote the category of dg $A$-modules. The
Nakayama functor $\nu:\cc(A)\rightarrow\cc(A)$ is defined by
$\nu(M)=\mathrm{D}\mathcal{H}om_{A}(M,A)$ \cite[Section
10]{Keller94}. There are canonical isomorphisms
\[\mathrm{D}\cHom_A(M,N)\cong \cHom_A(N,\nu M)\] for any strictly perfect dg
$A$-module $M$ and any dg $A$-module $N$. Then
$\nu(eA)=\mathrm{D}(Ae)$ for an idempotent $e$ of $A$, and the
functor $\nu$ induces a triangle equivalences between the
subcategories $\per(A)$ and $\thick(\mathrm{D}(A))$ of {$\cd(A)$} with
quasi-inverse given by
$\nu^{-1}(M)=\mathcal{H}om_{A}(\mathrm{D}(A),M)$. Moreover, we have the Auslander--Reiten formula
\[\mathrm{D}\Hom(M,N)\cong\Hom(N,\nu M),\]
which is natural in $M\in\per(A)$ and $N\in\cd(A)$.

 Let
$e_1,\ldots,e_r$, $S_1,\ldots,S_r$ {and $R_1,\ldots,R_r$} be as in the preceding
subsection. Then
\[\mathcal{H}om_{A}(S_j, \mathrm{D}(Ae_i))\cong \mathrm{D}\cHom_A(e_i A, S_j)=\begin{cases} {(R_j)_{R_j}} & \text{if }i=j,\\
                                           0 & \text{otherwise}.
                                           \end{cases}\]
Therefore, by (\ref{eq:morphism-homotopy-cat}) and (\ref{eq:cofibrant}),
\[
\Hom_{\cd(A)}(S_j,\Sigma^m\mathrm{D}(Ae_i))
=\begin{cases} {(R_j)_{R_j}} & \text{ if } i=j \text{ and } m=0,\\
0 & \text{ otherwise}.\end{cases}
\] 
Moreover,
$\{\mathrm{D}(Ae_1),\ldots,\mathrm{D}(Ae_r)\}$ and
$\{S_1,\ldots,S_r\}$ characterise each other in $\mathcal{D}(A)$ by
this property. This follows from the arguments in the preceding
subsection by applying the functors $\nu$ and $\nu^{-1}$.

\subsection{The standard co-$t$-structure}

Let $\cp_{\leq 0}$ (respectively, $\cp_{\geq 0}$) be the smallest
full subcategory of $\per(A)$ containing $\{\Sigma^m A\mid m\geq
0\}$ (respectively, $\{\Sigma^m A\mid m\leq 0\}$) and closed
under taking extensions and direct summands. The following lemma
is a special case of Proposition~\ref{p:AI's-from-silting-to-co-t-str}.
For the convenience of the reader we include a proof.

\begin{lemma}\label{l:standard-co-t-str}
 The pair $(\cp_{\geq 0},\cp_{\leq 0})$ is a co-$t$-structure on $\per(A)$.
Moreover, its co-heart is $\add(A_A)$.
\end{lemma}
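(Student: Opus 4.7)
The plan is to verify the four axioms of a co-$t$-structure and then to identify the co-heart. Axioms (i) and (ii) follow immediately from the construction. Closure under direct summands is built into the definitions of $\cp_{\geq 0}$ and $\cp_{\leq 0}$; both are additive because finite direct sums are particular extensions; and the stabilities $\Sigma^{-1}\cp_{\geq 0}\subseteq\cp_{\geq 0}$ and $\Sigma\cp_{\leq 0}\subseteq\cp_{\leq 0}$ are inherited from the evident stability of the two generating families $\{\Sigma^m A \mid m \leq 0\}$ and $\{\Sigma^m A \mid m \geq 0\}$ under $\Sigma^{-1}$ and $\Sigma$, respectively.

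For axiom (iii), I would argue by d\'evissage. Fixing $N\in\cp_{\leq 0}$, the collection of $M$ satisfying $\Hom(M,\Sigma N)=0$ is closed under extensions and direct summands; iterating this reduction also in the second variable, the vanishing reduces to the generator case $\Hom(\Sigma^m A,\Sigma(\Sigma^n A))\cong H^{n-m+1}(A)$ for $m\leq 0$ and $n\geq 0$. This cohomology group vanishes because $n-m+1\geq 1$ and $A$ is concentrated in non-positive degrees.

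Axiom (iv) is the central step. Given $M\in\per(A)$, Lemma~\ref{l:minimal-perfect-resolution} allows me to assume $M$ is a minimal strictly perfect dg module $\bigoplus_{j=1}^{N} R_j$ with $R_j\in\add(\Sigma^{t_j}A)$ and $t_1<\cdots<t_N$. The strict upper-triangular form of $\delta$ relative to this ordering makes $F_k:=\bigoplus_{j\leq k} R_j$ a sub-dg-module of $M$ for every $k$, with quotient $\bigoplus_{j>k} R_j$. Taking $k$ to be the largest index with $t_k\leq 0$, one obtains a distinguished triangle $M'\to M\to M''\to\Sigma M'$ in $\per(A)$ whose outer terms are $M'=\bigoplus_{t_j\leq 0} R_j\in\cp_{\geq 0}$ (an iterated extension of generators of $\cp_{\geq 0}$) and $M''=\bigoplus_{t_j>0} R_j$ with $\Sigma^{-1}M''\in\cp_{\leq 0}$ (an iterated extension of $\Sigma^{t_j-1}A$ with $t_j-1\geq 0$), so $M''\in\Sigma\cp_{\leq 0}$ as required.

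For the co-heart, the inclusion $\add(A_A)\subseteq\cp_{\geq 0}\cap\cp_{\leq 0}$ is immediate. Conversely, take $M$ in the intersection and write its minimal strictly perfect form $\bigoplus R_j$. The triangle constructed above for axiom (iv), together with $M\in\cp_{\geq 0}$ and the vanishing $\Hom(M,M'')=0$ from axiom (iii) (applied with $\Sigma^{-1}M''\in\cp_{\leq 0}$), forces the middle map to be zero, and hence $M'\cong M\oplus\Sigma^{-1}M''$; Krull--Schmidt cancellation in $\per(A)$ of the common summand $\bigoplus_{t_j\leq 0} R_j$ then yields $\bigoplus_{t_j>0} R_j=0$, so all $t_j\leq 0$. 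Performing the analogous construction with the cutoff between $t_j<0$ and $t_j\geq 0$ produces a second triangle $L'\to M\to L''\to\Sigma L'$ in which $L'=\bigoplus_{t_j<0} R_j\in\cp_{\geq 0}$ and $L''=\bigoplus_{t_j\geq 0} R_j\in\cp_{\leq 0}$; now $\Hom(L',M)=\Hom(\Sigma L',\Sigma M)=0$ by axiom (iii) (with $\Sigma L'\in\cp_{\geq 0}$ and $M\in\cp_{\leq 0}$), and rotating the triangle so that this zero becomes its middle map gives $L''\cong M\oplus\Sigma L'$, whence the same cancellation forces $\bigoplus_{t_j<0} R_j=0$. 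Combining the two conclusions, $t_j=0$ for all $j$, so $N\leq 1$ and $M\in\add(A_A)$. The hardest part will be axiom (iv), where unpacking ``strictly upper triangular $\delta$'' as a concrete sub-dg-module filtration requires care with the ordering conventions.
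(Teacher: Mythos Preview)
Your verification of the co-$t$-structure axioms is essentially the same as the paper's: both reduce the Hom-vanishing to $H^{>0}(A)=0$ by d\'evissage, and both obtain the approximation triangle by truncating a minimal strictly perfect model at the index separating $t_j\leq 0$ from $t_j>0$. (The paper writes this cutoff with an apparent sign typo, but the intended construction is exactly yours.) The paper omits axioms (i) and (ii) as routine and does not prove the co-heart claim in this proof at all, instead pointing to the general Proposition~\ref{p:AI's-from-silting-to-co-t-str}; you go further and attempt a direct argument.

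That extra argument, however, has a genuine gap. From $M'\cong M\oplus\Sigma^{-1}M''$ in $\per(A)$ you invoke ``Krull--Schmidt cancellation of the common summand $\bigoplus_{t_j\leq 0}R_j$''. But $M'=\bigoplus_{t_j\leq 0}R_j$ is only a sub-dg-module of $M$, not a direct summand of $M$ in $\per(A)$; there is no common summand to cancel. The isomorphism you have is in the derived category, where the graded-module decomposition $M=M'\oplus M''$ has no a priori meaning. The same objection applies to your second cancellation with $L'$ and $L''$.

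A clean repair is to use uniqueness of minimal strictly perfect models: both $M'$ and $M\oplus\Sigma^{-1}M''$ are minimal strictly perfect (the latter because the direct sum of two such is again of this form, with block-diagonal $\delta$ still strictly upper triangular after interleaving by shift), so the isomorphism in $\per(A)$ lifts to an isomorphism of dg modules. Comparing the underlying graded projective modules and counting summands then forces $M''=0$, and the symmetric argument gives $L'=0$, whence all $t_j=0$. Alternatively, you could observe that $\Sigma^{-1}M''$, being a summand of $M'\in\cp_{\geq 0}$, lies in the co-heart too, and appeal directly to Proposition~\ref{p:AI's-from-silting-to-co-t-str} as the paper does.
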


\begin{proof} Since $\Hom(A,\Sigma^m A)=0$ for $m\geq 0$, it follows
that $\Hom(X,\Sigma Y)=0$ for $M\in\cp_{\geq 0}$ and $N\in\cp_{\leq 0}$.
It remains to show that any object $M$ in $\per(A)$ fits into a triangle
whose outer terms belong to $\cp_{\geq 0}$ and $\cp_{\leq 0}$, respectively.
By Lemma~\ref{l:minimal-perfect-resolution}, we may assume that $M$ is minimal
perfect. Write $M=(\bigoplus_{j=1}^{N}R_j,d_{int}+\delta)$ as in
Section~\ref{ss:perfect-derived-cat}. Let $N'\in\{1,\ldots,N\}$ be the unique
integer such that $t_{N'}\geq 0$ but $t_{N'+1}<0$. Let $M'$ be the graded module
$\bigoplus_{j=1}^{N'}R_j$ endowed with the differential restricted
from $d_{int}+\delta$. Because $d_{int}+\delta$ is upper triangular,
$M'$ is a dg submodule of $M$. Clearly $M'$ belongs to $\cp_{\geq 0}$ and the
quotient $M''=M/M'$ belongs to $\Sigma\cp_{\leq 0}$. Thus we obtain
the desired triangle
\[\xymatrix{M'\ar[r]&M\ar[r]&M''\ar[r]&\Sigma M'}\]
with $M'$ in $\cp_{\geq 0}$ and $M''$ in $\Sigma\cp_{\leq 0}$.
\end{proof}

\section{The maps}\label{s:the-maps}
\label{section-maps}

Let $\Lambda$ be a finite-dimensional basic $K$-algebra.
This section is devoted to
defining the maps in the following diagram.

\[{\setlength{\unitlength}{0.7pt}
\begin{picture}(500,250)
\put(285,180){\framebox(190,70){\parbox{110pt}{\small bounded
co-$t$-structures on $K^b(\proj\Lambda)$}}}

\put(0,180){\framebox(190,70){\parbox{110pt}{\small equivalence
classes of silting objects in $K^b(\proj\Lambda)$}}}

\put(0,30){\framebox(190,70){\parbox{110pt}{\small equivalence
classes of simple-minded collections in $\cd^b(\mod\Lambda)$}}}

\put(285,30){\framebox(190,70){\parbox{110pt}{\small bounded
$t$-structures on $\cd^b(\mod\Lambda)$ with length heart}}}

\put(210,220){$\vector(1,0){50}$} \put(260,200){$\vector(-1,0){50}$}
\put(225,230){$\phi_{41}$} \put(225,185){$\phi_{14}$}
\put(210,70){$\vector(1,0){50}$} \put(260,50){$\vector(-1,0){50}$}
\put(225,80){$\phi_{32}$} \put(225,35){$\phi_{23}$}
\put(375,165){$\vector(0,-1){50}$} \put(385,140){$\phi_{34}$}
\put(100,165){$\vector(0,-1){50}$} \put(80,115){$\vector(0,1){50}$}
\put(57,140){$\phi_{12}$} \put(110,140){$\phi_{21}$}
\put(215,160){$\vector(1,-1){50}$} \put(240,140){$\phi_{31}$}
\end{picture}}\]

\subsection{Silting objects induce derived
equivalences}\label{ss:silting-der-equiv}

Let $M$ be a basic silting object of the category $K^b(\proj \Lambda)$. By definition, $M$ is a bounded complex of finitely generated projective $\Lambda$-modules such that $\Hom_{K^b(\proj\Lambda)}(M,\Sigma^m M)$ vanishes for all $m>0$. By
Lemma~\ref{l:negative-dg-and-silting}, there is a non-positive dg
algebra whose perfect derived category is triangle equivalent to
$K^b(\proj\Lambda)$. This equivalence sends the free dg module of
rank $1$ to $M$. Below we explicitly construct such a dg
algebra.

Consider $\cHom_\Lambda(M,M)$.
Recall that the degree $n$ component of $\cHom_\Lambda(M,M)$ consists
of those $\Lambda$-linear maps from $M$ to itself which are
homogeneous of degree $n$. The differential of
$\cHom_\Lambda(M,M)$ takes a homogeneous map $f$ of degree $n$ to
$d\circ f-(-1)^n f\circ d$, where $d$ is the differential of $M$. This differential and the composition of maps makes $\cHom_\Lambda(M,M)$ into a dg algebra.
Therefore $\cHom_\Lambda(M,M)$ is a finite-dimensional dg algebra. Moreover,
$H^m(\cHom_\Lambda(M,M))=\Hom_{\cd(\Lambda)}(M,\Sigma^m M)$ for any integer
$m$, by (\ref{eq:morphism-homotopy-cat}) and (\ref{eq:cofibrant}). Because $M$ is a silting object, $\cHom_\Lambda(M,M)$ has cohomology
concentrated in non-positive degrees. Take the
truncated dg algebra $\tilde\Gamma=\tau_{\leq 0}\cHom_\Lambda(M,M)$,
where $\tau_{\leq 0}$ is the standard truncation at position $0$.
Then the embedding $\tilde\Gamma\rightarrow\cHom_\Lambda(M,M)$ is a
quasi-isomorphism {of dg algebras}, and hence $\tilde\Gamma$ is a finite-dimensional
non-positive dg algebra. Therefore, the derived category
$\cd(\tilde\Gamma)$ carries a natural $t$-structure $(\cd^{\leq 0},
\cd^{\geq 0})$ with heart $\cd^{\leq 0}\cap\cd^{\geq 0}$ equivalent
to $\Mod\Gamma$, where $\Gamma=H^{0}(\tilde\Gamma)=\End_{\cd(A)}(M)$. This
$t$-structure restricts to a $t$-structure on
$\cd_{fd}(\tilde{\Gamma})$, denoted by $(\cd^{\leq 0}_{fd},\cd^{\geq
0}_{fd})$, whose heart is equivalent to $\mod\Gamma$. Moreover,
there is a standard co-$t$-structure $(\cp_{\geq 0},\cp_{\leq 0})$
on $\per(\tilde\Gamma)$, see Section~\ref{a:fin-dim-negative-dg}.

The object $M$ has a natural dg $\tilde{\Gamma}$-$\Lambda$-bimodule
structure. Moreover, since it generates $K^b(\proj\Lambda)$,
it follows from~\cite[Lemma 6.1 (a)]{Keller94} that there are triangle equivalences
\[\xymatrix@R=1.3pc{F=?\lten_{\tilde\Gamma}M:\hspace{-20pt}&\mathcal{D}(\tilde\Gamma)\ar[rr]^{\sim}&&\cd(\Lambda)\ar@{=}[r]&\mathcal{D}(\Mod\Lambda)\\
&\cd_{fd}(\tilde\Gamma)\ar@{^{(}->}[u]\ar[rr]^{\sim}&&\cd_{fd}(\Lambda)\ar@{=}[r]\ar@{^{(}->}[u]&\cd^b(\mod\Lambda)\ar@{^{(}->}[u]&\\
&\per(\tilde\Gamma)\ar@{^{(}->}[u]\ar[rr]^{\sim}&&\per(\Lambda)\ar@{=}[r]\ar@{^{(}->}[u]&K^b(\proj\Lambda)\ar@{^{(}->}[u].}\]
These equivalences take $\tilde\Gamma$ to $M$. 
The following special case of 
Theorem~\ref{t:silting-have-same-number-of-summands} is a consequence.

\begin{corollary}\label{c:silting-have-fixed-number-of-summands}
The number of indecomposable direct summands of $M$ equals the rank
of the Grothendieck group of $K^b(\proj\Lambda)$. In particular,
any two basic silting objects of $K^b(\proj\Lambda)$ have the same
number of indecomposable direct summands.
\end{corollary}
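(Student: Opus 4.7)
The plan is to read off $n$, the number of indecomposable direct summands of $M$, from the endomorphism algebra $\Gamma=\End_{K^b(\proj\Lambda)}(M)=H^0(\tilde\Gamma)$, and then match $n$ against the rank of $K_0(K^b(\proj\Lambda))$ via the triangle equivalences just constructed in this subsection.

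First I would use the equivalence $F:\per(\tilde\Gamma)\xrightarrow{\sim}K^b(\proj\Lambda)$ sending $\tilde\Gamma$ to $M$. Since $M$ is basic with, by definition, $n$ pairwise non-isomorphic indecomposable summands, the identity of $\Gamma$ admits a decomposition into $n$ primitive orthogonal idempotents (the projections onto the summands), and the corresponding indecomposable right $\Gamma$-modules $e_i\Gamma$ are pairwise non-isomorphic because the summands of $M$ are. Hence $\Gamma$ is a basic finite-dimensional $K$-algebra with exactly $n$ isomorphism classes of simple modules, so $\mod\Gamma$ is a length category with $K_0(\mod\Gamma)\cong\mathbb{Z}^n$. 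Next, the same equivalence $F$ restricts to a triangle equivalence $\cd_{fd}(\tilde\Gamma)\xrightarrow{\sim}\cd^b(\mod\Lambda)$; by Section~\ref{s:standard-t-structure}, the source carries a bounded $t$-structure whose heart is equivalent to $\mod\Gamma$, while the target carries the standard bounded $t$-structure with heart $\mod\Lambda$. Applying Lemma~\ref{l:heart}(a) to both hearts yields
\[
\mathbb{Z}^n\;\cong\;K_0(\mod\Gamma)\;\cong\;K_0(\cd_{fd}(\tilde\Gamma))\;\cong\;K_0(\cd^b(\mod\Lambda))\;\cong\;K_0(\mod\Lambda),
\]
the middle isomorphism being induced by $F$. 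Since the rank of $K_0(\mod\Lambda)$ equals the number of simple $\Lambda$-modules, which equals the number of indecomposable projective $\Lambda$-modules, which equals the rank of $K_0(K^b(\proj\Lambda))=K_0(\proj\Lambda)$, this rank is $n$, establishing the first statement. The second statement is then immediate, as rank $K_0(K^b(\proj\Lambda))$ is an invariant of $\Lambda$ not depending on the choice of silting object.

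There is no genuine obstacle here; the argument is a bookkeeping exercise supplied by the derived equivalence $F$. The one point to verify carefully is that the $n$ primitive idempotents of $\Gamma$ produced by the decomposition of $M$ are indeed pairwise non-isomorphic in $\mod\Gamma$, which is exactly the translation under $F$ of the basic-ness of $M$.
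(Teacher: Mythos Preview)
Your proof is correct and follows essentially the same route as the paper: both argue that the number of indecomposable summands of $M$ equals the number of simple $\Gamma$-modules, which equals the rank of $K_0(\cd_{fd}(\tilde\Gamma))\cong K_0(\cd^b(\mod\Lambda))$ via Lemma~\ref{l:heart}(a) applied to the standard $t$-structure with heart $\mod\Gamma$. Your version is in fact slightly more explicit than the paper's, which stops at $K_0(\cd^b(\mod\Lambda))$ and leaves the identification with $K_0(K^b(\proj\Lambda))$ tacit; your chain through simple $\Lambda$-modules and indecomposable projectives spells this out, and your remark about basic-ness of $M$ translating to pairwise non-isomorphism of the $e_i\Gamma$ is the point the paper takes for granted.
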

\begin{proof} The number of indecomposable direct summands of $M$
equals the rank of the Grothendieck group of $\mod\Gamma$, which
equals the rank of the Grothendieck group of
$\cd_{fd}(\tilde\Gamma)\cong\cd^b(\mod\Lambda)$ since $\mod\Gamma$
is the heart of a bounded $t$-structure (Lemma~\ref{l:heart}).
\end{proof}

Write $M=M_1\oplus\ldots\oplus M_r$ with $M_i$ indecomposable.
Suppose that $X_1,\ldots,X_r$ are objects in $\cd^b(\mod\Lambda)$
such that {their endomorphism algebras $R_1,\ldots,R_r$ are division algebras and that the following formula holds} for $i,j=1,\ldots,r$ and $m\in\mathbb{Z}$
\[\Hom(M_i,\Sigma^m X_j)=\begin{cases} {{}_{R_j}R_j} & \text{ if } i=j \text{ and } m=0,\\ 0 & \text{ otherwise}.\end{cases}\]
Then up to isomorphism,  the objects
$X_1,\ldots,X_r$ are sent by the derived equivalence $?\lten_{\tilde\Gamma}M$ to a complete set of {pairwise} non-isomorphic simple
$\Gamma$-modules, see Section~\ref{ss:simpledg}.

\begin{lemma}\label{l:hom-duality}
\begin{itemize}
\item[(a)] Let $X_1',\ldots,X_r'$ be objects of $\cd^b(\mod\Lambda)$ such that {the following formula holds} for $1\leq i,j\leq r$ and $m\in\mathbb{Z}$
\[\Hom(M_i,\Sigma^m X_j')=\begin{cases} {{}_{R_j}R_j} & \text{ if } i=j \text{ and } m=0,\\ 0 & \text{ otherwise}.\end{cases}\]
Then $X_i\cong X_i'$ for any $i=1,\ldots,r$.
\item[(b)] Let $M_1',\ldots,M_r'$ be objects of $K^b(\proj\Lambda)$
such that the following formula holds for $1\leq i,j\leq r$ and $m\in\mathbb{Z}$
\[\Hom(M_i',\Sigma^m X_j)=\begin{cases} {{}_{R_j}R_j} & \text{ if } i=j \text{ and } m=0,\\ 0 & \text{ otherwise}.\end{cases}\]
Then $M_i\cong M_i'$ for any $i=1,\ldots,r$.
\end{itemize}
\end{lemma}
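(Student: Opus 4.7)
The plan is to transport the problem to $\cd(\tilde\Gamma)$ via the triangle equivalence $F = {?\lten_{\tilde\Gamma}M}$ constructed just above the lemma, whose quasi-inverse I will denote by $G$. Recall that $F$ restricts to equivalences $\cd_{fd}(\tilde\Gamma) \iso \cd^b(\mod\Lambda)$ and $\per(\tilde\Gamma) \iso K^b(\proj\Lambda)$, that $F(e_i\tilde\Gamma) = M_i$ for a decomposition $1 = e_1 + \ldots + e_r$ of the unit of $\tilde\Gamma$ into primitive orthogonal idempotents, and that $F(S_j) = X_j$, where $S_1, \ldots, S_r$ is a complete set of pairwise non-isomorphic simple $\Gamma$-modules viewed as dg $\tilde\Gamma$-modules through the projection $\tilde\Gamma \to \Gamma$. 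This last identification is exactly the content of the paragraph preceding the lemma. Under these identifications, the endomorphism algebra of $X_j$ corresponds to $\End_{\cd(\tilde\Gamma)}(S_j) = R_j$.

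For part (a), I apply $G$ to each $X_j'$. Since $F$ is a triangle equivalence, the hypothesis on $X_j'$ becomes the statement that $\Hom_{\cd(\tilde\Gamma)}(e_i\tilde\Gamma, \Sigma^m G(X_j'))$ equals ${}_{R_j}R_j$ when $i = j$ and $m = 0$ and vanishes otherwise. By the characterization of the simple modules established in Section~\ref{ss:simpledg}, any dg $\tilde\Gamma$-module with this property (for the fixed $j$) is isomorphic in $\cd(\tilde\Gamma)$ to $S_j$. Hence $G(X_j') \cong S_j \cong G(X_j)$, and applying $F$ yields $X_j' \cong X_j$.

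For part (b), the analogous transport converts the hypothesis into the statement that $\Hom_{\cd(\tilde\Gamma)}(G(M_i'), \Sigma^m S_j)$ equals ${}_{R_j}R_j$ when $i = j$ and $m = 0$ and vanishes otherwise. Since $M_i' \in K^b(\proj\Lambda) = \per(\Lambda)$, the object $G(M_i')$ lies in $\per(\tilde\Gamma)$, so by Lemma~\ref{l:minimal-perfect-resolution} it may be replaced by a minimal strictly perfect dg model. The dual characterization in Section~\ref{ss:simpledg} then forces $G(M_i') \cong e_i\tilde\Gamma \cong G(M_i)$, and applying $F$ yields $M_i' \cong M_i$. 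There is no real obstacle: both parts reduce instantly to the already established characterizations of Section~\ref{ss:simpledg}, and the only point requiring mild care is the ambient subcategory in which each characterization is valid — all of $\cd(\tilde\Gamma)$ for the simples but only $\per(\tilde\Gamma)$ for the indecomposable summands of $\tilde\Gamma$ — which is automatic from the hypotheses on $X_j'$ and $M_i'$.
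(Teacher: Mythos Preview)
Your proof is correct and follows exactly the approach indicated in the paper, which simply refers the reader to Section~\ref{ss:simpledg}: transport the problem through the triangle equivalence $F$ to $\cd(\tilde\Gamma)$ and invoke the characterizations of the simples and of the $e_i\tilde\Gamma$ established there. You have simply spelled out in detail what the paper compresses into a single sentence, including the point that part~(b) requires $G(M_i')\in\per(\tilde\Gamma)$ so that the minimal perfect resolution argument applies.
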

\begin{proof} This follows from the corresponding result in
$\cd(\tilde\Gamma)$, see Section~\ref{ss:simpledg}.
\end{proof}

\subsection{From co-$t$-structures to silting objects}\label{ss:from-co-t-str-to-silting}

Let $(\cc_{\geq 0},\cc_{\leq 0})$ be a bounded co-$t$-structure of
$K^b(\proj\Lambda)$. By Lemma~\ref{l:co-heart}, the co-heart $\ca=\cc_{\geq 0}\cap\cc_{\leq 0}$
is a silting subcategory of {$K^b(\proj\Lambda)$}. Since $\Lambda$ is a silting
object of $K^b(\proj\Lambda)$, it follows from
Theorem~\ref{t:silting-have-same-number-of-summands} that $\ca$ has
an additive generator, say $M$, \ie $\ca=\add(M)$. Then $M$ is a
silting object in $K^b(\proj\Lambda)$. Define \begin{eqnarray*}
\phi_{14}(\cc_{\geq 0},\cc_{\leq 0})&=&M.\end{eqnarray*}

\subsection{From $t$-structures to simple-minded collections}\label{ss:from-t-str-to-smc} Let
$(\cc^{\leq 0},\cc^{\geq 0})$ be a bounded $t$-structure of
$\cd^b(\mod\Lambda)$ with length heart $\ca$. Boundedness
implies that the Grothendieck group of $\ca$ is isomorphic
to the Grothendieck group of $\cd^b(\mod\Lambda)$, which is free, say, of
rank $r$. Therefore, $\ca$ has precisely $r$ isomorphism classes of
simple objects, say $X_1,\ldots,X_r$. By Lemma~\ref{l:heart} (f),
$X_1,\ldots,X_r$ is a simple-minded collection in
$\cd^b(\mod\Lambda)$. Define \begin{eqnarray*}\phi_{23}(\cc^{\leq
0},\cc^{\geq 0})&=&\{X_1,\ldots,X_r\}.\end{eqnarray*}

\subsection{From silting objects to simple-minded collections, $t$-structures and
co-$t$-structures}\label{s:from-silting}

Let $M$ be a silting object of $K^b(\proj\Lambda)$. Define full
subcategories of $\cc$
\begin{eqnarray*}
\cc^{\leq 0}&=& \{N\in\cd^b(\mod\Lambda)\mid\Hom(M,\Sigma^m N)=0,
~\forall~
m>0\},\\
\cc^{\geq 0}&=& \{N\in\cd^b(\mod\Lambda)\mid\Hom(M,\Sigma^m N)=0,
~\forall~
m<0\},\\
\cc_{\leq 0}&=& \text{the additive closure of the extension closure of $\Sigma^m M$, $m\geq 0$ in } K^b(\proj \Lambda), \\
\cc_{\geq 0}&=& \text{the additive closure of the extension closure of
$\Sigma^m M$, $m\leq 0$ in } K^b(\proj \Lambda).
\end{eqnarray*}

\begin{lemma}\label{l:from-silting}
\begin{itemize}
\item[(a)] The pair $(\cc^{\leq 0},\cc^{\geq 0})$ is a bounded $t$-structure
on $\cd^b(\mod\Lambda)$ whose heart is equivalent to $\mod\Gamma$
for $\Gamma=\End(M)$. Write $M=M_1\oplus\ldots\oplus M_r$ and let
$X_1,\ldots,X_r$ be the corresponding simple objects of the heart with endomorphism algebras $R_1,\ldots,R_r$ respectively.
Then the following formula holds for $1\leq i,j\leq r$ and $m\in\mathbb{Z}$
\[\Hom(M_i,\Sigma^m X_j)=\begin{cases} {{}_{R_j}R_j} & \text{ if } i=j \text{ and } m=0,\\ 0 & \text{ otherwise}.\end{cases}\]
\item[(b)] The pair $(\cc_{\geq 0},\cc_{\leq 0})$ is a bounded
co-$t$-structure on $K^b(\proj\Lambda)$ whose co-heart is $\add(M)$.
\end{itemize}
\end{lemma}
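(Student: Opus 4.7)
\smallskip

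The plan is to deduce both statements from the derived equivalence constructed in Section~\ref{ss:silting-der-equiv}. Set $\tilde\Gamma=\tau_{\leq 0}\cHom_\Lambda(M,M)$ and let
\[
F=?\lten_{\tilde\Gamma}M:\cd(\tilde\Gamma)\iso\cd(\Mod\Lambda)
\]
be the triangle equivalence of \cite[Lemma 6.1 (a)]{Keller94}, which restricts to equivalences $\cd_{fd}(\tilde\Gamma)\iso\cd^b(\mod\Lambda)$ and $\per(\tilde\Gamma)\iso K^b(\proj\Lambda)$, and which sends $\tilde\Gamma$ to $M$. Since $\tilde\Gamma$ is a finite-dimensional non-positive dg algebra with $H^0(\tilde\Gamma)=\Gamma$, Section~\ref{a:fin-dim-negative-dg} provides all the ingredients we need on the $\tilde\Gamma$-side.

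For part (a), I would first check that $F^{-1}$ identifies $\cc^{\leq 0}$ and $\cc^{\geq 0}$ with the intersections of $\cd_{fd}(\tilde\Gamma)$ with the standard aisle $\cd^{\leq 0}$ and coaisle $\cd^{\geq 0}$ of Section~\ref{s:standard-t-structure}. For $N\in\cd^b(\mod\Lambda)$ and any integer $m$,
\[
\Hom_{\cd^b(\mod\Lambda)}(M,\Sigma^m N)\cong\Hom_{\cd(\tilde\Gamma)}(\tilde\Gamma,\Sigma^m F^{-1}(N))=H^m(F^{-1}(N)),
\]
so the $\Hom$-vanishing conditions defining $\cc^{\leq 0}$ and $\cc^{\geq 0}$ translate exactly into the standard cohomological truncation conditions on $F^{-1}(N)$. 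Hence $(\cc^{\leq 0},\cc^{\geq 0})$ is the image under $F$ of the restriction of the standard $t$-structure to $\cd_{fd}(\tilde\Gamma)$; in particular it is a bounded $t$-structure, and its heart is equivalent via $H^0$ to $\mod\bar{\tilde\Gamma}=\mod\Gamma$. The decomposition $M=M_1\oplus\cdots\oplus M_r$ corresponds under $F^{-1}$ to a decomposition $\tilde\Gamma=e_1\tilde\Gamma\oplus\cdots\oplus e_r\tilde\Gamma$ of $\tilde\Gamma$ into indecomposable dg submodules after Morita reduction, and the simple objects $X_1,\dots,X_r$ of the heart correspond to the simples $S_1,\dots,S_r$ of $\mod\Gamma$, viewed as stalk dg $\tilde\Gamma$-modules. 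The Hom formula is then exactly the computation of $\Hom_{\cd(\tilde\Gamma)}(e_i\tilde\Gamma,\Sigma^m S_j)$ carried out in Section~\ref{ss:simpledg}, transported through~$F$.

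For part (b), two approaches are available and give the same answer. The quick one is to invoke Lemma~\ref{l:negative-dg-and-silting}(a), which says $M$ is a silting object of $K^b(\proj\Lambda)$, together with Proposition~\ref{p:AI's-from-silting-to-co-t-str} applied to $\ca=\add(M)$: this yields directly that $(\cc_{\geq 0},\cc_{\leq 0})$ is a bounded co-$t$-structure with co-heart $\add(M)$. Alternatively, and consistently with the pattern of (a), one checks that $F$ carries the standard co-$t$-structure $(\cp_{\geq 0},\cp_{\leq 0})$ of Lemma~\ref{l:standard-co-t-str} on $\per(\tilde\Gamma)$ to $(\cc_{\geq 0},\cc_{\leq 0})$; this is immediate because a triangle equivalence preserves extension closures and direct summands, and it sends $\Sigma^m\tilde\Gamma$ to $\Sigma^m M$ for every $m\in\Z$. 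The co-heart then transports to $\add(M)$ by Lemma~\ref{l:standard-co-t-str}.

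The only real point of care is the verification in part (a) that the conditions $\Hom(M,\Sigma^m N)=0$ for $m>0$ (respectively $m<0$) are equivalent, via $F$, to the vanishing of the positive (respectively negative) cohomology of $F^{-1}(N)$; this is not a deep step, but it hinges on $F$ sending $\tilde\Gamma$ to $M$ and on the identity $\Hom_{\cd(\tilde\Gamma)}(\tilde\Gamma,\Sigma^m -)=H^m(-)$. Once this is in place, both (a) and (b) fall out by transport of structure.
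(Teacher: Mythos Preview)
Your proposal is correct and follows essentially the same route as the paper: both argue by transporting the standard $t$-structure on $\cd_{fd}(\tilde\Gamma)$ and the standard co-$t$-structure on $\per(\tilde\Gamma)$ through the equivalence $F=?\lten_{\tilde\Gamma}M$. The paper's proof is terser (``one checks that\ldots''), while you spell out the key identification $\Hom(M,\Sigma^m N)\cong H^m(F^{-1}N)$ and note the alternative of invoking Proposition~\ref{p:AI's-from-silting-to-co-t-str} directly for~(b); the latter is a harmless shortcut, and the rest matches the paper's argument.
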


The first statement of part (a) is proved by Keller and Vossieck~\cite{KellerVossieck88} in the case when $\Lambda$
is the path algebra of a Dynkin quiver and by Assem, Souto and Trepode~\cite{AssemSoutoTrepode08}
 in the case when $\Lambda$ is hereditary.

\begin{proof}
Let $\tilde\Gamma$ be the truncated dg endomorphism algebra of $M$,
see Section~\ref{ss:silting-der-equiv}. Then $\per(\tilde\Gamma)$
has a standard bounded co-$t$-structure $(\cp_{\geq 0},\cp_{\leq
0})$ and $\cd_{fd}(\tilde\Gamma)$ has a standard bounded
$t$-structure $(\cd_{fd}^{\leq 0},\cd_{fd}^{\geq 0})$ with heart
equivalent to $\mod\Gamma$. One checks that the triangle equivalence
$?\lten_{\tilde\Gamma}M$ takes $(\cp_{\geq 0},\cp_{\leq 0})$ to
$(\cc_{\geq 0},\cc_{\leq 0})$ and it takes $(\cd_{fd}^{\leq
0},\cd_{fd}^{\geq 0})$ to $(\cc^{\leq 0},\cc^{\geq 0})$.
\end{proof}

Define \begin{eqnarray*}
\phi_{31}(M)&=&(\cc^{\leq 0},\cc^{\geq
0}),\\
\phi_{41}(M)&=&(\cc_{\geq 0},\cc_{\leq 0}),\\
\phi_{21}(M)&=&\{X_1,\ldots,X_r\}.
\end{eqnarray*}

\subsection{From simple-minded collections to $t$-structures}
\label{ss:from-smc-to-t-str}

Let $X_1,\ldots,X_r$ be a simple-minded collection of
$\cd^b(\mod\Lambda)$. Let $\cc^{\leq 0}$ (respectively, $\cc^{\geq
0}$) be  the extension closure of $\{\Sigma^m X_i\mid i=1,\ldots,r,
m\geq 0\}$ (respectively, $\{\Sigma^m X_i\mid i=1,\ldots,r, m\leq 0\}$)
in $\cd^b(\mod\Lambda)$.

\begin{proposition}\label{p:simpleminded-to-t-str}
 The pair $(\cc^{\leq 0},\cc^{\geq 0})$ is
 a bounded
$t$-structure on $\cd^b(\mod\Lambda)$. Moreover, the heart of this
$t$-structure is a length category with simple objects
$X_1,\ldots,X_r$. The same results hold true with $\cd^b(\mod\Lambda)$ replaced by a Hom-finite
Krull--Schmidt triangulated category $\cc$.
\end{proposition}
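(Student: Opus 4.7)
The approach is to verify the three defining axioms of a $t$-structure in turn, and then establish boundedness and the description of the heart. The inclusions $\Sigma\cc^{\leq 0}\subseteq\cc^{\leq 0}$ and $\Sigma^{-1}\cc^{\geq 0}\subseteq\cc^{\geq 0}$ are built into the definition by extension closure. For the orthogonality $\Hom(U,\Sigma^{-1}V)=0$ with $U\in\cc^{\leq 0}$ and $V\in\cc^{\geq 0}$, I would induct on the extension-closure construction: writing $U$ and $V$ as iterated extensions of shifts $\Sigma^a X_i$ (with $a\geq 0$) and $\Sigma^b X_j$ (with $b\leq 0$) respectively, and applying the long exact sequences of $\Hom$, the vanishing reduces to $\Hom(X_i,\Sigma^{b-a-1}X_j)=0$, which holds because $b-a-1<0$ and the $X_i$ form a simple-minded collection.

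The truncation axiom is the crux. My plan is to show by induction that every $Y\in\cc$ admits a triangle $Y'\to Y\to Y''\to\Sigma Y'$ with $Y'\in\cc^{\leq 0}$ and $Y''\in\Sigma^{-1}\cc^{\geq 0}$. Since $\cc=\thick(X_1,\ldots,X_r)$, each object arises from finitely many iterated cones, shifts and direct summands of the $X_i$. The base case is a single shift $\Sigma^n X_i$, for which the triangle is trivial since $\Sigma^n X_i$ itself lies wholly inside one of the two subcategories, with zero as the complementary term. Given truncation triangles for $U$ and $V$ and an extension $U\to Y\to V\to\Sigma U$, a double application of the octahedral axiom should combine them into a truncation triangle for $Y$; the composite vanishing needed for this gluing is supplied precisely by the orthogonality verified above. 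Closure of $\cc^{\leq 0}$ and $\cc^{\geq 0}$ under direct summands can then be deduced from the Krull--Schmidt property together with Hom-orthogonality, which absorbs summands into the correct piece. Propagating truncation through this inductive construction is the technical heart and main obstacle of the proof.

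Boundedness is then immediate: any object $Y$ is built from finitely many shifts of the $X_i$ in a bounded range of degrees, hence lies in $\Sigma^n\cc^{\leq 0}\cap\Sigma^{-n}\cc^{\geq 0}$ for $n$ large. To identify the heart $\ca=\cc^{\leq 0}\cap\cc^{\geq 0}$, each $X_i$ belongs to $\ca$ by definition. Simplicity of $X_i$ in $\ca$ follows from the Schur-type axioms of a simple-minded collection: any short exact sequence $0\to L\to X_i\to N\to 0$ in $\ca$ comes from a triangle in $\cc$, and decomposing $L$ and $N$ along their resulting filtrations by the $X_j$'s, together with $\Hom(X_i,X_j)=0$ for $i\neq j$ and $\End(X_i)$ being a division algebra, forces either $L$ or $N$ to vanish. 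Length of $\ca$ follows because each object of $\ca$ admits, by construction, a finite iterated extension by the $X_i$'s with no non-trivial shifts contributing. All these arguments use only the triangulated structure together with Hom-finiteness and the Krull--Schmidt property, so they apply verbatim to a general Hom-finite Krull--Schmidt triangulated category.
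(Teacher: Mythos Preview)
Your overall strategy is reasonable, and the orthogonality, boundedness, and heart arguments are essentially correct. However, the truncation axiom is where your proposal has a genuine gap, and the paper (following Al-Nofayee) proceeds quite differently there.

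The octahedral gluing you sketch does not go through. Concretely, given truncation triangles $U'\to U\to U''\to\Sigma U'$ and $V'\to V\to V''\to\Sigma V'$ and an extension $U\to Y\to V\to\Sigma U$, two applications of the octahedron produce an object $A$ sitting in a triangle
\[
U''\longrightarrow A\longrightarrow V'\longrightarrow \Sigma U'',
\]
with $U''\in\Sigma^{-1}\cc^{\geq 0}$ and $V'\in\cc^{\leq 0}$. To obtain a truncation of $Y$ you would need to ``flip'' this triangle, i.e.\ to show that $A\in\cc^{\leq 0}*\Sigma^{-1}\cc^{\geq 0}$. That would follow if the connecting morphism $V'\to\Sigma U''$ vanished, but $\Sigma U''\in\cc^{\geq 0}$ and $\Hom(\cc^{\leq 0},\cc^{\geq 0})$ is \emph{not} zero (it contains all morphisms in the heart). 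The orthogonality you established is only $\Hom(\cc^{\leq 0},\Sigma^{-1}\cc^{\geq 0})=0$, which is strictly weaker than what this step requires. Equivalently, the class $\cc^{\leq 0}*\Sigma^{-1}\cc^{\geq 0}$ is not obviously extension closed, so the induction on the thick closure stalls.

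The argument the paper cites (Al-Nofayee, \cite[Corollary~3 and Proposition~4]{Al-Nofayee09}) avoids this obstacle by constructing the truncation directly via an iterative approximation. For $Y\in\cc$, Hom-finiteness and the generation hypothesis ensure that $\Hom(\Sigma^m X_i,Y)\neq 0$ for only finitely many pairs $(m,i)$. One takes the largest such $m$, chooses a map $f\colon E\to Y$ with $E$ a finite direct sum of copies of $\Sigma^m X_i$ such that each $\Hom(\Sigma^m X_i,f)$ is surjective, and replaces $Y$ by the cone of $f$. The key point, which uses that each $\End(X_i)$ is a division algebra, is that this strictly lowers the top degree $m$, so the process terminates and yields a filtration of $Y$ exhibiting the required triangle $Y'\to Y\to Y''\to\Sigma Y'$. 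This is where Hom-finiteness and Krull--Schmidt are genuinely used; your inductive scheme invokes them only for the summand step and not for producing the truncation itself.
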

\begin{proof}  The first two statements are~\cite[Corollary 3 and Proposition 4]{Al-Nofayee09}.
The proof there still works if we replace $\cd^b(\mod\Lambda)$ by $\cc$.
\end{proof}

Define \begin{eqnarray*} \phi_{32}(X_1,\ldots,X_r)&=&(\cc^{\leq
0},\cc^{\geq 0}).
\end{eqnarray*}
Later we will show that the heart of this $t$-structure {always}
is equivalent to the category of finite-dimensional modules over a
finite-dimensional algebra (Corollary~\ref{c:length-heart}). This
was proved by Al-Nofayee for self-injective algebras $\Lambda$,
see~\cite[Theorem 7]{Al-Nofayee09}.

\begin{corollary}
Any two simple-minded collections in $\cd^b(\mod\Lambda)$ have the
same cardinality.
\end{corollary}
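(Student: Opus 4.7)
The plan is to use Proposition \ref{p:simpleminded-to-t-str} to reduce the question to a statement about Grothendieck groups, where Lemma \ref{l:heart}(a) provides the invariant we need.

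Given a simple-minded collection $X_1,\ldots,X_r$ in $\cd^b(\mod\Lambda)$, I would first invoke Proposition \ref{p:simpleminded-to-t-str} to produce a bounded $t$-structure $(\cc^{\leq 0},\cc^{\geq 0})$ on $\cd^b(\mod\Lambda)$ whose heart $\ca$ is a length category with simple objects $X_1,\ldots,X_r$. Since $\ca$ is a length category, every object admits a finite composition series, and Jordan--H\"older gives that the classes of the simple objects form a $\Z$-basis of the Grothendieck group $K_0(\ca)$; in particular $K_0(\ca)$ is free abelian of rank exactly $r$.

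Next I would apply Lemma \ref{l:heart}(a) to this $t$-structure: the inclusion $\ca\hookrightarrow\cd^b(\mod\Lambda)$ induces an isomorphism $K_0(\ca)\iso K_0(\cd^b(\mod\Lambda))$. Thus $r$ equals the rank of $K_0(\cd^b(\mod\Lambda))$, which is an invariant of $\Lambda$ independent of the chosen simple-minded collection. Consequently any other simple-minded collection $X'_1,\ldots,X'_{r'}$ yields $r' = \rank K_0(\cd^b(\mod\Lambda)) = r$.

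There is essentially no obstacle here, since all the non-trivial work has already been done in Proposition \ref{p:simpleminded-to-t-str} and Lemma \ref{l:heart}. The only subtlety worth flagging is the implicit use of Jordan--H\"older for the length category $\ca$; this is standard but should perhaps be mentioned, as it is what guarantees that the rank of $K_0(\ca)$ literally counts isomorphism classes of simple objects rather than merely bounding them.
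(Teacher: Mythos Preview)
Your proposal is correct and follows essentially the same approach as the paper: invoke Proposition~\ref{p:simpleminded-to-t-str} to obtain a bounded $t$-structure with length heart having the $X_i$ as simples, then identify the cardinality with the rank of $K_0(\cd^b(\mod\Lambda))$ via Lemma~\ref{l:heart}(a). Your version is simply more explicit about the Jordan--H\"older step than the paper's one-line proof.
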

\begin{proof}
By Proposition~\ref{p:simpleminded-to-t-str}, the cardinality of a
simple-minded collection equals the rank of the Grothendieck group
of $\cd^b(\mod\Lambda)$. The assertion follows.
\end{proof}

\subsection{From simple-minded collections to silting
objects}\label{s:from-simple-minded-to-silting}
 Let $X_1,\ldots,X_r$ be a
simple-minded collection in $\cd^b(\mod\Lambda)$. We will construct
a silting object $\nu^{-1}T$ of $K^b(\proj\Lambda)$ following a method of Rickard~\cite{Rickard02}. 
Then we define
\begin{eqnarray*}\phi_{12}(X_1,\ldots,X_r)&=&\nu^{-1}T.\end{eqnarray*}
The
same construction is studied by
Keller and Nicol\'as~\cite{KellerNicolas12} in the context of positive
dg algebras.
In the case of $\Lambda$ being hereditary, 
Buan, Reiten and Thomas~\cite{BuanReitenThomas12} give an elegant construction 
of $\nu^{-1}(T)$ using the Braid group action on exceptional
sequences. Unfortunately, their construction cannot be generalised.

\smallskip

{Let $R_1,\ldots,R_r$ be the endomorphism algebras of 
$X_1,\ldots,X_r$, respectively.}

Set $X_i^{(0)}=X_i$. Suppose $X_i^{(n-1)}$ is constructed. For
$i,j=1,\ldots,r$ and $m<0$, let $B(j,m,i)$ be a basis of
$\Hom(\Sigma^m X_j,X_i^{(n-1)})$ {over $R_j$}. Put
\[Z_i^{(n-1)}=\bigoplus_{m<0}\bigoplus_j \bigoplus_{B(j,m,i)}\Sigma^m
X_{j}\] and let $\alpha_i^{(n-1)}:Z_i^{(n-1)}\rightarrow
X_i^{(n-1)}$ be the map whose component corresponding to $f\in
B(j,m,i)$ is exactly $f$.

Let $X_i^{(n)}$ be a cone of $\alpha_i^{(n-1)}$ and form the
corresponding triangle
\[\xymatrix{Z_i^{(n-1)}\ar[r]^{\alpha_i^{(n-1)}}
& X_i^{(n-1)}\ar[r]^{\beta_i^{(n-1)}}& X_i^{(n)}\ar[r]& \Sigma
Z_i^{(n-1)}.}\] Inductively, a sequence of morphisms in
$\cd(\Mod\Lambda)$ is constructed:
\[\xymatrix{X_i^{(0)}\ar[r]^{\beta_i^{(0)}} & X_i^{(1)}\ar[r] &\ldots\ar[r] &
X_i^{(n-1)}\ar[r]^{\beta_i^{(n-1)}} & X_i^{(n)}\ar[r] & \ldots.}\]
Let $T_i$ be the homotopy colimit of this sequence. That is, up to isomorphism,
$T_i$ is defined by the following triangle
\[\xymatrix{\bigoplus_{n\geq 0}X_i^{(n)} \ar[r]^{id-\beta} & \bigoplus_{n\geq 0}X_i^{(n)}\ar[r] & T_i\ar[r] &\Sigma \bigoplus_{n\geq 0}X_i^{(n)}}.\]
Here $\beta=(\beta_{mn})$ is the square matrix with rows and columns labeled by non-negative integers and
with entries $\beta_{mn}=\beta_i^{(n)}$ if $n+1=m$ and $0$ otherwise.

These properties of $T_i$'s were proved by Rickard in~\cite{Rickard02}
for symmetric algebras $\Lambda$ over algebraically closed fields. Rickard remarked that they hold for arbitrary fields, see~\cite[Section 8]{Rickard02}. In fact, his proofs {verbatim} carry over  to general finite-dimensional algebras.

\begin{lemma}\label{L:T}
\begin{itemize}
\item[(a)] \emph{(\cite[Lemma 5.4]{Rickard02})} For $1\leq i,j\leq r$, and
$m\in\mathbb{Z}$,
\[\Hom(X_j,\Sigma^m T_{i})=\begin{cases} {(R_j)_{R_j}} & \text{if }i=j\text{ and } m=0,\\
                                           0 & \text{otherwise}.
                                           \end{cases}\]

\item[(b)] \emph{(\cite[Lemma 5.5]{Rickard02})} For each $1\leq i\leq r$, $T_i$ is
quasi-isomorphic to a bounded complex of finitely generated injective $\Lambda$-modules.

\item[(c)] \emph{(\cite[Lemma 5.8]{Rickard02})} Let $C$ be an object of
$\mathcal{D}^{-}(\mod \Lambda)$. If $\Hom(C,\Sigma^m T_i)=0$ for all
$m\in\mathbb{Z}$ and all $1\leq i\leq r$, then $C=0$.
\end{itemize}
\end{lemma}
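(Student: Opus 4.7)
The lemma has three parts, all attributed to Rickard~\cite{Rickard02}, where they are proved for symmetric algebras over an algebraically closed field. My plan is to follow Rickard's arguments, replacing basis choices over $K$ by basis choices over the division rings $R_j=\End(X_j)$ wherever needed.

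For (a), the plan is to compute $\Hom(X_j, \Sigma^m X_i^{(n)})$ by induction on $n$ using the defining triangle $Z_i^{(n-1)} \to X_i^{(n-1)} \to X_i^{(n)} \to \Sigma Z_i^{(n-1)}$, and then pass to the homotopy colimit. The range $m\geq 0$ is stable: since $Z_i^{(n-1)}$ is a sum of $\Sigma^k X_\ell$ with $k<0$, the simple-minded property forces $\Hom(X_j,\Sigma^m Z_i^{(n-1)})$ and $\Hom(X_j,\Sigma^{m+1}Z_i^{(n-1)})$ to vanish for $m\geq 0$, so the long exact sequence gives $\Hom(X_j,\Sigma^m X_i^{(n)})\cong\Hom(X_j,\Sigma^m X_i)$, which equals $\delta_{ij}(R_j)_{R_j}$ at $m=0$ and $0$ otherwise. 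For $m<0$, the map $\alpha_i^{(n-1)}$ is surjective on $\Hom(\Sigma^{-m}X_j,\cdot)$ by construction, so the long exact sequence kills $\Hom(X_j,\Sigma^m X_i^{(n)})$ in that range, with the killed range growing as $n$ grows. Since the tower $\{\Hom(X_j,\Sigma^m X_i^{(n)})\}_n$ is therefore eventually constant for each $(j,m)$, taking the homotopy colimit and invoking the Milnor triangle kills the $\lim^1$ term and yields the claimed formula for $T_i$.

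For (b), the plan is first to replace each $X_i$ by a quasi-isomorphic bounded complex of finitely generated injective $\Lambda$-modules, which is possible since $X_i\in\cd^b(\mod\Lambda)$. Every $Z_i^{(n-1)}$ is then a finite direct sum of shifts of such complexes, and every cone $X_i^{(n)}$ stays in $K^b(\inj\Lambda)$. To conclude that the homotopy colimit $T_i$ itself lies in $K^b(\inj\Lambda)$, one shows that in each fixed cohomological degree the sequence $\{X_i^{(n)}\}$ stabilizes, using the inductive control from (a): in any bounded range only finitely many steps contribute, because the relevant Hom-spaces from $\Sigma^m X_j$ vanish once $n$ is large enough, so $T_i$ has bounded and finite-dimensional cohomology.

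For (c), suppose $C\in\cd^-(\mod\Lambda)$ is nonzero and let $d$ be maximal with $H^d(C)\neq 0$. For any simple composition factor $S$ of $H^d(C)$, the canonical truncation produces a nonzero map $C\to\Sigma^{-d}S$, so it suffices to show $S\in\thick(T_1,\ldots,T_r)$: the hypothesis then forces $\Hom(C,\Sigma^{-d}S)=0$, a contradiction. The approach to this containment is to transfer the thick generation of $\cd^b(\mod\Lambda)$ by $\{X_1,\ldots,X_r\}$ (Proposition~\ref{p:simpleminded-to-t-str}) across the Hom-duality of part (a); concretely, one builds $S$ from finitely many $T_i$'s via a filtration dual to the composition series of $S$ in the length heart of the $t$-structure defined by $\{X_j\}$. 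The main obstacle is precisely this step: converting the numerical duality between $\{X_j\}$ and $\{T_i\}$ from (a) into a genuine thick generation statement for $\{T_1,\ldots,T_r\}$ requires either a careful filtration or spectral sequence argument, or a direct recursive construction of each simple module from the $T_i$'s via the intermediate objects $X_i^{(n)}$. Secondary care is needed throughout to track the action of the division rings $R_j$, but this does not affect any structural argument.
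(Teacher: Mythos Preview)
The paper itself gives no proof of this lemma; it simply records that Rickard's arguments from \cite{Rickard02} carry over verbatim to general finite-dimensional algebras. So there is nothing to compare against except Rickard's original proofs, which your sketch for (a) follows correctly.

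There is, however, a genuine gap in your plan for (b). You begin by replacing each $X_i$ by a bounded complex of finitely generated injectives, but this is not possible in general: an object of $\cd^b(\mod\Lambda)$ lies in $K^b(\inj\Lambda)$ only when it has finite injective dimension, which fails already for the simple module over $\Lambda=K[x]/(x^2)$. Your stabilisation argument does show that $T_i$ has bounded finite-dimensional total cohomology, i.e.\ $T_i\in\cd^b(\mod\Lambda)$, but that is strictly weaker than the claim. The missing step is to use part (a): since every simple $\Lambda$-module $S$ lies in $\thick(X_1,\ldots,X_r)$, part (a) forces $\Hom(S,\Sigma^m T_i)=0$ for $|m|\gg 0$, and for an object of $\cd^b(\mod\Lambda)$ this vanishing is exactly what guarantees a bounded minimal injective resolution, hence $T_i\in K^b(\inj\Lambda)$.

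Your plan for (c) has a more serious error. You propose to show that every simple $\Lambda$-module $S$ lies in $\thick(T_1,\ldots,T_r)$, but this is false in general: by (b) the $T_i$ lie in $K^b(\inj\Lambda)$, so $\thick(T_1,\ldots,T_r)\subseteq K^b(\inj\Lambda)$, and simples of infinite injective dimension are not there. Concretely, for $\Lambda=K[x]/(x^2)$ with $X_1=S$ one gets $T_1\cong\Lambda$, and $S\notin\thick(\Lambda)=K^b(\proj\Lambda)$. The correct route, following Rickard, is instead to prove directly that the hypothesis forces $\Hom(C,\Sigma^m X_j)=0$ for all $m,j$: one uses that $C\in\cd^-(\mod\Lambda)$ to obtain a maximal degree $m_1$ with $\Hom(C,\Sigma^{m_1}X_j)\neq 0$ for some $j$, and then propagates this nonvanishing along the tower $X_j^{(0)}\to X_j^{(1)}\to\cdots$ to contradict $\Hom(C,\Sigma^{m_1}T_j)=0$. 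Once $\Hom(C,\Sigma^m X_j)=0$ for all $m,j$, generation by the $X_j$'s gives $\Hom(C,N)=0$ for all $N\in\cd^b(\mod\Lambda)$, and your truncation argument finishes the job.
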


From now on we assume that $T_i$ is a bounded complex of finitely generated injective $\Lambda$-modules.
Recall from Section~\ref{ss:nakayama-for-fd-alg} that the Nakayama
functor $\nu$ and  the inverse Nakayama functor  $\nu^{-1}$ are
quasi-inverse triangle equivalences between $K^b(\proj\Lambda)$ and
$K^b(\inj\Lambda)$ The following is a consequence of Lemma~\ref{L:T}
and the Auslander--Reiten formula.

\begin{lemma}\label{L:nuinverseT}
\begin{itemize}
\item[(a)] For $1\leq i,j\leq r$, and $m\in\mathbb{Z}$,
\[\Hom(\nu^{-1}T_{i},\Sigma^m X_j)=\begin{cases} {}_{R_j}R_j & \text{if }i=j\text{ and } m=0,\\
                                           0 & \text{otherwise}.
                                           \end{cases}\]
\item[(b)] For each $1\leq i\leq r$, $\nu^{-1}T_i$ is a
bounded complex of finitely generated projective $\Lambda$-modules.
\item[(c)] Let $C$
be an object of $\mathcal{D}^{-}(\mod \Lambda)$. If
$\Hom(\nu^{-1}T_i,\Sigma^m C)=0$ for all $m\in\mathbb{Z}$ and all
$1\leq i\leq r$, then $C=0$.
\end{itemize}
\end{lemma}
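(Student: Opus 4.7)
The plan is to deduce each of (a), (b) and (c) from the corresponding part of Lemma~\ref{L:T} by transporting it across the Nakayama equivalence $\nu^{-1}\colon K^b(\inj\Lambda)\iso K^b(\proj\Lambda)$ and applying the Auslander--Reiten formula $\mathrm{D}\Hom(M,N)\cong\Hom(N,\nu M)$ recalled in Section~\ref{ss:nakayama-for-fd-alg}.

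I would dispose of (b) first, since it is needed in order to apply the AR formula in parts (a) and (c): the formula requires its first argument to lie in $K^b(\proj\Lambda)$. By Lemma~\ref{L:T}(b), $T_i$ is quasi-isomorphic to an object of $K^b(\inj\Lambda)$, and since $\nu^{-1}$ restricts to a triangle equivalence $K^b(\inj\Lambda)\iso K^b(\proj\Lambda)$, the object $\nu^{-1}T_i$ is represented by a bounded complex of finitely generated projective $\Lambda$-modules.

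For (a), I would apply the AR formula with $M=\nu^{-1}T_i\in K^b(\proj\Lambda)$ and $N=\Sigma^m X_j$ to obtain
\[
\mathrm{D}\Hom(\nu^{-1}T_i,\Sigma^m X_j)\;\cong\;\Hom(\Sigma^m X_j,\nu\nu^{-1}T_i)\;\cong\;\Hom(X_j,\Sigma^{-m}T_i),
\]
and then invoke Lemma~\ref{L:T}(a): this last space is $(R_j)_{R_j}$ precisely when $i=j$ and $m=0$, and vanishes otherwise. Dualising, and keeping in mind that $\mathrm{D}$ swaps a right $R_j$-module into a left one (and sends a one-dimensional right $R_j$-vector space to a one-dimensional left one), yields the asserted formula. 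Part (c) is handled by the same identity
\[
\mathrm{D}\Hom(\nu^{-1}T_i,\Sigma^m C)\;\cong\;\Hom(C,\Sigma^{-m}T_i),
\]
which shows that the hypothesis of (c) (for all $i$ and all $m\in\mathbb{Z}$) is equivalent to the hypothesis of Lemma~\ref{L:T}(c) (again for all $i$ and all $m$), whence $C=0$.

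I do not expect any real obstacle: the whole proof is a three-fold application of the Auslander--Reiten formula. The only care needed is to establish (b) before (a) and (c) so that $\nu^{-1}T_i$ is known to lie in $K^b(\proj\Lambda)$ where the AR formula applies, and to track the sided $R_j$-module structure through $\mathrm{D}$ in part (a).
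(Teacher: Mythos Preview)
Your proposal is correct and matches the paper's approach exactly: the paper states only that the lemma ``is a consequence of Lemma~\ref{L:T} and the Auslander--Reiten formula'' and gives no further details. You have simply spelled out this consequence, including the care about establishing (b) first so that $\nu^{-1}T_i\in K^b(\proj\Lambda)$ and the AR formula applies, and the passage of the $R_j$-module side under $\mathrm{D}$.
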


Put $T=\bigoplus_{i=1}^{r} T_i$ and
$\nu^{-1}T=\bigoplus_{i=1}^{r}\nu^{-1}T_i$.

\begin{lemma}\label{L:vanishingofposext}
We have $\Hom(\nu^{-1}T,\Sigma^m T)=0$ for $m<0$. Equivalently,
$\Hom(\nu^{-1}T,\Sigma^m \nu^{-1}T)=\Hom(T,\Sigma^m T)=0$ for $m>0$.
\end{lemma}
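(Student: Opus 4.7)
The plan is to establish $\Hom(\nu^{-1}T, \Sigma^m T) = 0$ for $m < 0$; the equivalence with the other two formulations is then formal. Indeed, $\nu$ is a triangle equivalence $K^b(\proj\Lambda) \simeq K^b(\inj\Lambda)$, which yields $\Hom(T, \Sigma^m T) = \Hom(\nu^{-1}T, \Sigma^m \nu^{-1}T)$; and applying the Auslander--Reiten formula $\mathrm{D}\Hom(M, N) \cong \Hom(N, \nu M)$ with $M = \nu^{-1}T$ and $N = \Sigma^m \nu^{-1}T$ gives $\mathrm{D}\Hom(\nu^{-1}T, \Sigma^m \nu^{-1}T) \cong \Hom(\nu^{-1}T, \Sigma^{-m} T)$, interchanging the sign of $m$.

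For the main vanishing I would proceed in two steps. First, I would show by induction on $n$ that $\Hom(\nu^{-1}T_i, \Sigma^m X_j^{(n)}) = 0$ for all $m < 0$ and all $1 \le i, j \le r$. The base case $n = 0$ is Lemma~\ref{L:nuinverseT}(a), since $X_j^{(0)} = X_j$. For the inductive step, apply $\Hom(\nu^{-1}T_i, \Sigma^m -)$ to the defining triangle $Z_j^{(n-1)} \to X_j^{(n-1)} \to X_j^{(n)} \to \Sigma Z_j^{(n-1)}$. By construction $Z_j^{(n-1)}$ is a coproduct of copies of $\Sigma^k X_l$ with $k < 0$, and Lemma~\ref{L:nuinverseT}(a) gives $\Hom(\nu^{-1}T_i, \Sigma^{m+k} X_l) = 0$ unless $m + k = 0$ and $i = l$. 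For $m < 0$ and $k < 0$ this forces both $\Hom(\nu^{-1}T_i, \Sigma^m Z_j^{(n-1)})$ and $\Hom(\nu^{-1}T_i, \Sigma^{m+1} Z_j^{(n-1)})$ to vanish (the latter because $m + k + 1 = 0$ would demand $k = -m - 1 \ge 0$). The long exact sequence then carries the inductive hypothesis from $X_j^{(n-1)}$ to $X_j^{(n)}$.

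Second, I would pass to the homotopy colimit. By Lemma~\ref{L:nuinverseT}(b), $\nu^{-1}T_i$ lies in $K^b(\proj\Lambda) = \per(\Lambda)$, hence is compact in $\cd(\Mod\Lambda)$. The construction of $T_j$ as a homotopy colimit of $X_j^{(0)} \to X_j^{(1)} \to \cdots$ together with compactness yields a natural isomorphism $\Hom(\nu^{-1}T_i, \Sigma^m T_j) \cong \colim_n \Hom(\nu^{-1}T_i, \Sigma^m X_j^{(n)})$, and every term on the right vanishes for $m < 0$ by the first step.

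I expect the induction in the first step to carry all of the weight; once the double vanishing of $\Hom(\nu^{-1}T_i, \Sigma^m Z_j^{(n-1)})$ at degrees $m$ and $m + 1$ is isolated, the argument reduces to a routine long exact sequence chase. The seemingly delicate boundary case $m = -1$ in the second step needs no separate treatment, as it is absorbed into the colimit formula for compact objects.
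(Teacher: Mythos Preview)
Your proof is correct and follows the same approach as the paper, which simply cites Rickard's proof of \cite[Lemma~5.7]{Rickard02} with $T_i$ in the first argument replaced by $\nu^{-1}T_i$; that proof proceeds exactly as you describe, by inducting on $n$ to get the vanishing for $X_j^{(n)}$ via the defining triangles and Lemma~\ref{L:nuinverseT}(a), and then passing to the homotopy colimit using compactness of $\nu^{-1}T_i$.
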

\begin{proof} Same as the proof of~\cite[Lemma 5.7]{Rickard02}, with the
$T_i$ in the first entry of $\Hom$ there replaced by $\nu^{-1}T_i$.
\end{proof}

It follows from Lemma~\ref{L:nuinverseT} (c) that $\nu^{-1}T$
generates $K^b(\proj\Lambda)$. Combining this with
Lemma~\ref{L:vanishingofposext} implies 
\begin{proposition}
$\nu^{-1}T$ is a silting object of $K^b(\proj\Lambda)$.
\end{proposition}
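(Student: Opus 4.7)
To verify that $\nu^{-1}T$ is silting, I would check the two conditions in the definition: $\thick(\nu^{-1}T)=K^b(\proj\Lambda)$ and $\Hom(\nu^{-1}T,\Sigma^m\nu^{-1}T)=0$ for all $m>0$.

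The Hom-vanishing is exactly Lemma~\ref{L:vanishingofposext}, so it requires no further work. The generation claim is asserted in the sentence immediately preceding the proposition as a consequence of Lemma~\ref{L:nuinverseT}(c); to justify that step in detail, I would argue by devissage. Since both $P\in K^b(\proj\Lambda)$ and each $\nu^{-1}T_i$ are bounded complexes of finitely generated projectives, the set of $m$ with $\Hom(\nu^{-1}T_i,\Sigma^m P)\neq 0$ for some $i$ is a finite interval. Taking $m_0$ to be its maximum, form the universal evaluation map $E_0=\bigoplus_i\Hom(\nu^{-1}T_i,\Sigma^{m_0}P)\otimes_{R_i}\nu^{-1}T_i\to\Sigma^{m_0}P$ and let $P_1$ be its cone. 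Applying $\Hom(\nu^{-1}T_j,-)$ to the resulting triangle and using Lemma~\ref{L:vanishingofposext} to kill the contributions from $E_0$ at positive shifts, one sees that the maximum nonvanishing degree for $P_1$ strictly decreases.

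Iterating, and then treating the minimum degree by the symmetric procedure, after finitely many steps the remaining object has $\Hom(\nu^{-1}T_i,\Sigma^m -)=0$ for all $i$ and all $m$; it therefore vanishes by Lemma~\ref{L:nuinverseT}(c). Unwinding the tower of cones displays $P$ as an iterated extension of shifts of summands of $\nu^{-1}T$, so $P\in\thick(\nu^{-1}T)$, completing the generation claim. The main obstacle is confirming that the devissage genuinely terminates on both ends of the degree range: without the positive Hom-vanishing of $\nu^{-1}T$ on itself (condition (ii) above), cancelling the top nonvanishing degree could in principle introduce new Hom's even higher up. It is exactly the two inputs working together that make the argument go through, and combining them yields the proposition.
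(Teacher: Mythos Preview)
Your identification of the two inputs---Lemma~\ref{L:vanishingofposext} for the Hom-vanishing and Lemma~\ref{L:nuinverseT}(c) for generation---matches the paper's one-sentence proof exactly. The paper does not spell out how Lemma~\ref{L:nuinverseT}(c) yields $\thick(\nu^{-1}T)=K^b(\proj\Lambda)$; your d\'evissage is an attempt to fill that in.

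The d\'evissage as written has a termination gap. Each top-approximation step does strictly lower the maximum Hom-degree, but it can also push the \emph{minimum} down: in the triangle $E_0\to\Sigma^{m_0}P\to P_1\to\Sigma E_0$, the term $\Sigma E_0$ contributes to $\Hom(\nu^{-1}T_j,\Sigma^{m}P_1)$ for $m$ as low as roughly $m_0-N-1$, where $N$ bounds the negative self-extensions of $\nu^{-1}T$. So the Hom-support may drift downward step by step and the process need not reach an object with empty support. Your ``symmetric procedure'' at the minimum is also not available in the form you need: the natural coevaluation out of $P$ lands in shifts of $T=\nu(\nu^{-1}T)\in K^b(\inj\Lambda)$, not in shifts of $\nu^{-1}T$, so the resulting cones need not remain in $K^b(\proj\Lambda)$ and do not directly exhibit $P$ as built from $\nu^{-1}T$. (Also, the tensor $\otimes_{R_i}$ is not quite right: $R_i=\End(X_i)$, not $\End(\nu^{-1}T_i)$.) One cleaner route is to pass to the compactly generated category $\cd(\Mod\Lambda)$: since $\nu^{-1}T$ is compact there, generation of $\per(\Lambda)=K^b(\proj\Lambda)$ follows once the right $\mathbb{Z}$-perpendicular of $\nu^{-1}T$ vanishes, and this is what Lemma~\ref{L:nuinverseT}(c) provides (one must check, going back to Rickard's argument, that it applies to arbitrary objects and not only those in $\cd^-(\mod\Lambda)$).
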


\medskip

Rickard's construction was originally motivated by constructing tilting complexes over symmetric algebras which yield certain derived
equivalences, {see \cite[Theorem 5.1]{Rickard02}. His work was later generalised by Al-Nofayee to self-injective algebras, see \cite[Theorem 4]{Al-Nofayee07}.}

\subsection{From co-$t$-structures to
$t$-structures}\label{ss:from-co-t-to-t}

Let $(\cc_{\geq 0},\cc_{\leq 0})$ be a bounded co-$t$-structure of
$K^b(\proj\Lambda)$. Let \begin{eqnarray*} \cc^{\leq 0} &=&
\{N\in\cd^b(\mod\Lambda)\mid \Hom(M,N)=0,~\forall~M\in\Sigma^{-1}\cc_{\geq
0}\}\\
\cc^{\geq 0} &=&
\{N\in\cd^b(\mod\Lambda)\mid \Hom(M,N)=0,~\forall~M\in\Sigma^{-1}\cc_{\leq
0}\}.
\end{eqnarray*}

\begin{lemma}
The pair $(\cc^{\leq 0},\cc^{\geq 0})$ is a bounded $t$-structure on
$\cd^b(\mod\Lambda)$ with length heart.
\end{lemma}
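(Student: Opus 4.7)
The plan is to identify $(\cc^{\leq 0},\cc^{\geq 0})$ with the $t$-structure produced from the silting object underlying the given co-$t$-structure (i.e.\ the composite $\phi_{31}\circ\phi_{14}$), thereby reducing the claim to Lemma~\ref{l:from-silting}(a). By Lemma~\ref{l:co-heart} the co-heart $\ca=\cc_{\geq 0}\cap\cc_{\leq 0}$ is a silting subcategory of $K^b(\proj\Lambda)$; since $\Lambda$ is itself a silting object, Theorem~\ref{t:silting-have-same-number-of-summands} forces $K_0(K^b(\proj\Lambda))$ to have finite rank, so $\ca$ has only finitely many indecomposables up to isomorphism and equals $\add(M)$ for the basic silting object $M=\phi_{14}(\cc_{\geq 0},\cc_{\leq 0})$.

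By Proposition~\ref{p:AI's-from-silting-to-co-t-str}, $\cc_{\geq 0}$ and $\cc_{\leq 0}$ are the extension closures in $K^b(\proj\Lambda)$ of $\{\Sigma^m M\mid m\leq 0\}$ and $\{\Sigma^m M\mid m\geq 0\}$ respectively. Shifting, the subcategory $\Sigma^{-1}\cc_{\geq 0}$ that enters the definition of $\cc^{\leq 0}$ is the extension closure of $\{\Sigma^m M\mid m\leq -1\}$, and analogously the shifted co-aisle controlling $\cc^{\geq 0}$ is the extension closure of $\{\Sigma^m M\mid m\geq 1\}$.

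Applying Lemma~\ref{l:from-silting}(a) to $M$ yields a bounded $t$-structure $(\tilde\cc^{\leq 0},\tilde\cc^{\geq 0})$ on $\cd^b(\mod\Lambda)$ with length heart equivalent to $\mod\End(M)$, where $\tilde\cc^{\leq 0}=\{N\mid\Hom(M,\Sigma^m N)=0\ \forall\, m>0\}$ and $\tilde\cc^{\geq 0}=\{N\mid\Hom(M,\Sigma^m N)=0\ \forall\, m<0\}$. Since $\Hom(-,N)$ is cohomological, vanishing on a set of objects is equivalent to vanishing on its extension closure, and therefore
\[
\cc^{\leq 0}=\{N\mid\Hom(\Sigma^m M,N)=0\ \forall\, m\leq -1\}=\{N\mid\Hom(M,\Sigma^k N)=0\ \forall\, k\geq 1\}=\tilde\cc^{\leq 0},
\]
and symmetrically $\cc^{\geq 0}=\tilde\cc^{\geq 0}$, proving the claim.

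The only genuine obstacle is the bookkeeping of shifts in this final identification: once Proposition~\ref{p:AI's-from-silting-to-co-t-str} supplies the generators of the (shifted) aisle and co-aisle of the co-$t$-structure in terms of $M$, the translation between Hom-orthogonality against these generators and Hom-vanishing against shifts of $M$ is a routine index computation. Everything else is a direct assembly of Lemma~\ref{l:co-heart}, Proposition~\ref{p:AI's-from-silting-to-co-t-str}, and Lemma~\ref{l:from-silting}(a).
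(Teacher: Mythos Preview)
Your proof is correct and follows exactly the paper's approach: the paper's proof is the single line ``Because $(\cc^{\leq 0},\cc^{\geq 0})=\phi_{31}\circ\phi_{14}(\cc_{\geq 0},\cc_{\leq 0})$'', and you have unpacked precisely this identification. The one minor remark is that your appeal to Proposition~\ref{p:AI's-from-silting-to-co-t-str} to recover $\cc_{\geq 0}$ and $\cc_{\leq 0}$ as extension closures of shifts of $M$ implicitly uses that a bounded co-$t$-structure is determined by its co-heart (i.e.\ $\phi_{41}\circ\phi_{14}=\mathrm{id}$, which is Lemma~\ref{l:silting-and-co-t-str}); citing that lemma directly would make the logic slightly cleaner.
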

\begin{proof}
Because $(\cc^{\leq 0},\cc^{\geq
0})=\phi_{31}\circ\phi_{14}(\cc_{\geq 0},\cc_{\leq 0})$.
\end{proof}
By definition $(\cc^{\leq 0},\cc^{\geq 0})$ is right orthogonal to the given co-$t$-structure in the sense of Bondarko~\cite{Bondarko10a}.
Define \begin{eqnarray*} \phi_{34}(\cc_{\geq 0},\cc_{\leq
0})&=&(\cc^{\leq 0},\cc^{\geq 0}).
\end{eqnarray*}

If $\Lambda$ has finite global dimension, then
$K^b(\proj\Lambda)$ is identified with $\cd^b(\mod\Lambda)$. As a
consequence, $\cc_{\leq 0}=\cc^{\leq 0}$ and $\cc^{\geq 0}=\nu\cc_{\geq 0}$. Thus the $t$-structure
$(\cc^{\leq 0},\cc^{\geq 0})$ is right adjacent to the given
co-$t$-structure $(\cc_{\geq 0},\cc_{\leq 0})$ in the sense of
Bondarko~\cite{Bondarko10}.

\subsection{Some remarks}\label{ss:generality-of-the-maps}
Some of the maps $\phi_{ij}$ are defined in more general setups:

\begin{itemize}
\item[--] $\phi_{14}$ and
$\phi_{41}$ are defined for all triangulated categories, with
silting objects replaced by silting subcategories, by
Proposition~\ref{p:AI's-from-silting-to-co-t-str} and Lemma~\ref{l:co-heart},
see also~\cite{Bondarko10,KellerNicolas11,MendozaSaenzSantiagoSouto10}.
\item[--] $\phi_{23}$ is defined for all triangulated
categories, with simple-minded collections allowed to contain
infinitely many objects (Lemma~\ref{l:heart}).
\item[--] $\phi_{32}$ is defined for all algebraic
triangulated categories (see~\cite{KellerNicolas12}) and for Hom-finite
Krull--Schmidt triangulated categories
(see Proposition~\ref{p:simpleminded-to-t-str}).
\item[--] $\phi_{21}$ and $\phi_{31}$ are defined for all
algebraic triangulated categories (replacing $K^b(\proj\Lambda)$),
with $\cd^b(\mod\Lambda)$ replaced by a suitable triangulated
category; then we may follow the arguments in
Sections~\ref{s:standard-t-structure} and~\ref{s:from-silting}.
\item[--] $\phi_{34}$ is defined for all algebraic triangulated
categories (replacing $K^b(\proj\Lambda)$), with
$\cd^b(\mod\Lambda)$ replaced by a suitable triangulated category.
Then we may follow the argument in Section~\ref{ss:from-co-t-to-t}.
\item[--] $\phi_{12}$ is defined for {finite-dimensional non-positive dg algebras, since these dg algebras behave like finite-dimensional algebras from the perspective of derived categories. Similarly, $\phi_{12}$ is defined for  homologically
smooth non-positive dg algebras,} see~\cite{KellerNicolas11}.
\end{itemize}

\section{The correspondences are bijections}\label{s:bijectivity} \label{section-bijections}


Let $\Lambda$ be a finite-dimensional $K$-algebra.
In the preceding section we defined the maps $\phi_{ij}$. In this section we will show that they are bijections. See \cite{AssemSoutoTrepode08,Vitoria12} for related work, focussing on
piecewise hereditary algebras.

\begin{theorem}\label{t:main-thm-bijections} \label{maintheorembijections}
The $\phi_{ij}$'s defined in Section~\ref{section-maps} are bijective.
In particular, there are one-to-one correspondences between
\begin{itemize}
 \item[(1)] equivalence classes of silting objects in $K^b(\proj \Lambda)$,
 \item[(2)] equivalence classes of simple-minded collections in $\cd^b(\mod \Lambda)$,
 \item[(3)] bounded $t$-structures on $\cd^b(\mod \Lambda)$ with length heart,
 \item[(4)] bounded co-$t$-structures on $K^b(\proj \Lambda)$.
\end{itemize}

\end{theorem}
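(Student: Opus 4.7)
The plan is to reduce bijectivity to three pairs of mutually inverse maps, from which the remaining correspondences will follow by composition; the three pairs are silting $\leftrightarrow$ co-$t$-structure, simple-minded collection $\leftrightarrow$ $t$-structure, and silting $\leftrightarrow$ simple-minded collection.

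For the first pair, $\phi_{14}\circ\phi_{41}=\id$ follows directly from Proposition~\ref{p:AI's-from-silting-to-co-t-str}: the co-heart of the co-$t$-structure manufactured from a silting object $M$ is $\add(M)$. The reverse composition requires showing that a bounded co-$t$-structure is recovered from its co-heart by taking extension closures of shifts; this I would obtain by running the inductive argument used in the proof of Lemma~\ref{l:co-heart}, namely that any object in $\cc_{\leq 0}\cap\Sigma^n\cc_{\geq 0}$ sits in a triangle whose outer terms lie in the co-heart and in $\cc_{\leq 0}\cap\Sigma^{n+1}\cc_{\geq 0}$, with boundedness supplying the base case. For the second pair, $\phi_{23}\circ\phi_{32}=\id$ is precisely Proposition~\ref{p:simpleminded-to-t-str}, while $\phi_{32}\circ\phi_{23}=\id$ is the content of Lemma~\ref{l:heart}(d).

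The third (and most substantial) pair relies on Rickard's construction together with the uniqueness statements of Lemma~\ref{l:hom-duality}. Given a silting object $M=M_1\oplus\cdots\oplus M_r$, the simple-minded collection $\{X_i\}=\phi_{21}(M)$ is singled out by the Hom-duality formula of Section~\ref{ss:silting-der-equiv}. Applying $\phi_{12}$ to $\{X_i\}$ yields $\nu^{-1}T=\bigoplus\nu^{-1}T_i$, and Lemma~\ref{L:nuinverseT}(a) shows that each $\nu^{-1}T_i$ satisfies the same Hom-duality identity with $X_j$ as does $M_i$. The uniqueness statement Lemma~\ref{l:hom-duality}(b) then forces $\nu^{-1}T_i\cong M_i$, giving $\phi_{12}\circ\phi_{21}=\id$. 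The reverse composition is symmetric: given a simple-minded collection, the silting object produced by $\phi_{12}$ yields, under $\phi_{21}$, a collection satisfying the same characterising Hom-formula as the original, and Lemma~\ref{l:hom-duality}(a) forces them to coincide.

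Once $(1)\leftrightarrow(2)$, $(1)\leftrightarrow(4)$ and $(2)\leftrightarrow(3)$ are established, the remaining pair $(3)\leftrightarrow(4)$ is a composition, and one needs only verify the consistency of $\phi_{34}$ and $\phi_{31}$ with $\phi_{21}$, $\phi_{32}$ and $\phi_{14}$; this is immediate from Lemma~\ref{l:from-silting}, which identifies all the induced structures through the derived equivalence $?\lten_{\tilde\Gamma}M$. The main obstacle is Step~3: matching $\nu^{-1}T_i$ with $M_i$ rests on the uniqueness in Lemma~\ref{l:hom-duality}, which in turn depends on the characterisation of simple dg modules and their indecomposable projective partners over the truncated dg endomorphism algebra $\tilde\Gamma$ developed in Section~\ref{ss:simpledg}, transported back through the derived equivalence. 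Everything else is a relatively formal consequence of the material assembled in Sections~\ref{section-fourconcepts}--\ref{section-maps}.
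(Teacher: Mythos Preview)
Your proposal is correct and follows essentially the same route as the paper: reduce to the three pairs $\phi_{14}\leftrightarrow\phi_{41}$, $\phi_{23}\leftrightarrow\phi_{32}$, $\phi_{12}\leftrightarrow\phi_{21}$, each established via the cited lemmas (co-heart description, Proposition~\ref{p:simpleminded-to-t-str} plus Lemma~\ref{l:heart}, and the Hom-duality characterisation of Lemma~\ref{l:hom-duality}), and then handle $\phi_{31}$ and $\phi_{34}$ by composition. Your treatment of $\phi_{41}\circ\phi_{14}=\id$ is slightly more explicit than the paper's terse citation of Lemma~\ref{l:co-heart}, but the underlying argument is the same.
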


There is an immediate consequence:
\begin{corollary}\label{c:length-heart}
Let $\ca$ be the heart of a bounded $t$-structure on
$\cd^b(\mod\Lambda)$. If $\ca$ is a length category, then $\ca$ is
equivalent to $\mod\Gamma$ for some finite-dimensional algebra
$\Gamma$. \end{corollary}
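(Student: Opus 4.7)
The strategy is to extract this corollary directly from the bijections of Theorem~\ref{maintheorembijections}. First, given a bounded $t$-structure $(\cc^{\leq 0}, \cc^{\geq 0})$ on $\cd^b(\mod\Lambda)$ with length heart $\ca$, I would invoke Lemma~\ref{l:heart}(a) to conclude that $K_0(\ca) \cong K_0(\cd^b(\mod\Lambda))$, which is free of finite rank $r$. Hence $\ca$ has exactly $r$ isomorphism classes of simple objects, say $X_1, \ldots, X_r$, and by Lemma~\ref{l:heart}(f) these form a simple-minded collection in $\cd^b(\mod\Lambda)$. By the definition of $\phi_{32}$ together with Lemma~\ref{l:heart}(d), the original $t$-structure is recovered as $\phi_{32}(X_1, \ldots, X_r)$.

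Next, I would pass to the silting side via $\phi_{12}$: applied to $\{X_1, \ldots, X_r\}$, it produces a silting object $M = \nu^{-1}T$ of $K^b(\proj\Lambda)$. By Lemma~\ref{l:from-silting}(a), the associated $t$-structure $\phi_{31}(M)$ on $\cd^b(\mod\Lambda)$ has heart equivalent to $\mod\Gamma$, where $\Gamma = \End_{K^b(\proj\Lambda)}(M)$; this is a finite-dimensional $K$-algebra because $M$ is a bounded complex of finite-dimensional projective $\Lambda$-modules.

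Finally, I would identify $\phi_{31}(M)$ with the starting $t$-structure $(\cc^{\leq 0}, \cc^{\geq 0})$. By Lemma~\ref{l:from-silting}(a), the simples of the heart of $\phi_{31}(M)$ satisfy the $\Hom$-characterisation that, via Lemma~\ref{l:hom-duality}(a), forces them to be $X_1, \ldots, X_r$ up to isomorphism. Since a bounded $t$-structure with length heart is determined by its simples through extension closure (Lemma~\ref{l:heart}(d)), we obtain $\phi_{31}(M) = \phi_{32}(X_1, \ldots, X_r) = (\cc^{\leq 0}, \cc^{\geq 0})$, and therefore $\ca$ is equivalent to $\mod\Gamma$. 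The corollary is then immediate; there is no serious obstacle once Theorem~\ref{maintheorembijections} is available — the only point worth highlighting is that the length-heart hypothesis is precisely what is needed to trigger Lemma~\ref{l:heart}(f) and thereby enter the bijection with simple-minded collections.
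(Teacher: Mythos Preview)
Your proposal is correct and follows essentially the same route as the paper: both arguments pass from the given $t$-structure to a silting object $M$ and then invoke Lemma~\ref{l:from-silting}(a) to identify the heart with $\mod\End(M)$. The only difference is one of packaging: the paper simply cites the bijectivity of $\phi_{31}$ from Theorem~\ref{maintheorembijections} to assert that the $t$-structure has the form $\phi_{31}(M)$ for some silting object $M$, whereas you explicitly construct $M$ as $\phi_{12}\circ\phi_{23}$ of the $t$-structure and then verify (via Lemma~\ref{L:nuinverseT}(a), Lemma~\ref{l:hom-duality}(a), and Lemma~\ref{l:heart}(d)) that $\phi_{31}(M)$ recovers the original $t$-structure --- in effect re-deriving the relevant piece of Lemma~\ref{l:transitivity} rather than quoting it.
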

\begin{proof}{
By Theorem~\ref{maintheorembijections}, such a $t$-structure is of the form 
$\phi_{31}(M)$ for some silting object $M$ of $K^b(\proj\Lambda)$. The 
result then follows from Lemma~\ref{l:from-silting} (a).}
\end{proof}

The proof of the theorem is divided into several lemmas, which are
consequences of the material collected in the previous sections.

\begin{lemma}\label{l:silting-and-co-t-str}
 The maps $\phi_{14}$ and $\phi_{41}$ are inverse to each other.
\end{lemma}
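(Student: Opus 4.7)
The plan is to verify the two composites $\phi_{14}\circ\phi_{41}$ and $\phi_{41}\circ\phi_{14}$ separately; both reduce to material already set up in Sections~\ref{s:the-four-str} and~\ref{s:the-maps}, and no genuinely new idea is required. For $\phi_{14}\circ\phi_{41}=\mathrm{id}$, I would start from a silting object $M$. Lemma~\ref{l:from-silting}(b), which is a special case of Proposition~\ref{p:AI's-from-silting-to-co-t-str}, says that $\phi_{41}(M)$ is a bounded co-$t$-structure on $K^b(\proj\Lambda)$ whose co-heart equals $\add(M)$. The map $\phi_{14}$ then selects an additive generator of this co-heart, and any such generator is equivalent to $M$ in the sense that it additively generates the same subcategory, which is all that is needed.

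For $\phi_{41}\circ\phi_{14}=\mathrm{id}$, I would start from a bounded co-$t$-structure $(\cc_{\geq 0},\cc_{\leq 0})$ with co-heart $\ca$. By Lemma~\ref{l:co-heart}, $\ca$ is a silting subcategory, and since $\Lambda\in K^b(\proj\Lambda)$ is itself a silting object, Theorem~\ref{t:silting-have-same-number-of-summands} forces $\ca=\add(M)$ for some silting object $M$, so $\phi_{14}(\cc_{\geq 0},\cc_{\leq 0})=M$. Writing $(\cc'_{\geq 0},\cc'_{\leq 0})=\phi_{41}(M)$, the easy containments $\cc'_{\leq 0}\subseteq\cc_{\leq 0}$ and $\cc'_{\geq 0}\subseteq\cc_{\geq 0}$ follow because $M\in\ca\subseteq\cc_{\leq 0}\cap\cc_{\geq 0}$, and $\cc_{\leq 0}$ is stable under $\Sigma$, extensions, and direct summands (Bondarko's Proposition~1.3.3.3, already invoked in the proof of Lemma~\ref{l:co-heart}), with the dual properties for $\cc_{\geq 0}$.

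The hard part will be the reverse inclusions, which I would handle by replaying the inductive argument from the proof of Lemma~\ref{l:co-heart}. Given $X\in\cc_{\leq 0}$, boundedness provides an integer $n\leq 0$ with $X\in\Sigma^n\cc_{\geq 0}\cap\cc_{\leq 0}$, and the truncation triangle whose middle term lies in $\ca=\add(M)$ (Bondarko's Proposition~1.3.3.6) together with descending induction on $n$ exhibits $X$ as a successive extension of shifts $\Sigma^m M$ with $0\leq m\leq -n$, placing $X$ in $\cc'_{\leq 0}$. A symmetric argument gives $\cc_{\geq 0}\subseteq\cc'_{\geq 0}$. Beyond this bookkeeping, no further obstacle arises, and the two co-$t$-structures coincide.
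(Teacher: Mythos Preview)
Your proposal is correct and follows the same route as the paper: Lemma~\ref{l:from-silting}(b) handles $\phi_{14}\circ\phi_{41}$, and the inductive argument behind Lemma~\ref{l:co-heart} handles $\phi_{41}\circ\phi_{14}$. The paper's own proof is far terser than yours---it simply cites these two lemmas and leaves the induction implicit---so your expanded treatment of the reverse inclusions is exactly what is being glossed over. Two small bookkeeping slips to fix: for $X\in\cc_{\leq 0}$, boundedness gives $X\in\Sigma^n\cc_{\geq 0}$ with $n\geq 0$ (not $n\leq 0$, since $\Sigma^n\cc_{\geq 0}$ grows with $n$), and in the corresponding truncation triangle it is an \emph{outer} term that lies in $\ca=\add(M)$, not the middle term.
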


\begin{proof} Let $M$ be a basic silting object. The definitions of
$\phi_{14}$ and $\phi_{41}$ and Lemma~\ref{l:from-silting} (b) imply that
$\phi_{14}\circ\phi_{41}(M)\cong M$.

Let $(\cc_{\geq 0},\cc_{\leq 0})$ be a bounded co-$t$-structure on
$K^b(\proj\Lambda)$. It follows from Lemma~\ref{l:co-heart} that
$\phi_{41}\circ\phi_{14}(\cc_{\geq 0},\cc_{\leq 0})=(\cc_{\geq
0},\cc_{\leq 0})$.
\end{proof}

Recall from Section~\ref{ss:generality-of-the-maps} that $\phi_{14}$ and
$\phi_{41}$ are defined in full generality. Lemma~\ref{l:silting-and-co-t-str}
holds in full generality as well,
see~\cite[Corollary 5.8]{MendozaSaenzSantiagoSouto10} and \cite{KellerNicolas11}.

\begin{lemma}
 The maps $\phi_{21}$ and $\phi_{12}$ are inverse to each other.
\end{lemma}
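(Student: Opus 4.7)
The plan is to recognise both $\phi_{12}$ and $\phi_{21}$ as constructions whose output is pinned down uniquely by the Hom orthogonality formula appearing in Lemma~\ref{l:from-silting}(a), and then to invoke the uniqueness statements of Lemma~\ref{l:hom-duality} to conclude. In each direction the argument becomes a one-step comparison: exhibit two candidate families satisfying the same characterising Hom formula, and appeal to uniqueness.

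For $\phi_{21}\circ\phi_{12} = \id$, I would start with a simple-minded collection $\{X_1,\ldots,X_r\}$ with endomorphism algebras $R_j = \End(X_j)$ and form $\nu^{-1}T = \phi_{12}(X_1,\ldots,X_r)$, with indecomposable summands $\nu^{-1}T_1,\ldots,\nu^{-1}T_r$. By Lemma~\ref{L:nuinverseT}(a) these summands satisfy
\[
\Hom(\nu^{-1}T_i,\Sigma^m X_j) \;=\; \begin{cases}{}_{R_j}R_j & \text{if } i=j \text{ and } m=0,\\ 0 & \text{otherwise.}\end{cases}
\]
On the other hand, $\phi_{21}(\nu^{-1}T)$ is by definition the set of simple objects $\{Y_1,\ldots,Y_r\}$ of the heart of $\phi_{31}(\nu^{-1}T)$, and Lemma~\ref{l:from-silting}(a) asserts that the $Y_j$ satisfy exactly the same Hom formula relative to the $\nu^{-1}T_i$. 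Uniqueness in Lemma~\ref{l:hom-duality}(a) (applied with $M := \nu^{-1}T$) then forces $Y_j \cong X_j$ for each $j$, which is precisely the claim.

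For the reverse composition $\phi_{12}\circ\phi_{21} = \id$, I would run the dual argument. Starting from a basic silting object $M = M_1\oplus\cdots\oplus M_r$, Lemma~\ref{l:from-silting}(a) says that the simples $X_j$ of the heart of $\phi_{31}(M)$ satisfy the characterising Hom formula with respect to the summands $M_i$; applying $\phi_{12}$ then produces a silting object $\nu^{-1}T'$ whose indecomposable summands $\nu^{-1}T'_i$ satisfy the same formula with respect to the $X_j$ by Lemma~\ref{L:nuinverseT}(a); and Lemma~\ref{l:hom-duality}(b) identifies $\nu^{-1}T'_i$ with $M_i$ up to isomorphism, giving $\nu^{-1}T' \cong M$ as required.

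The only point needing mild care is the bookkeeping of the indexing together with the endomorphism algebras $R_j$ attached to each $X_j$: one has to confirm that both $\phi_{12}$ and $\phi_{21}$ preserve the summand-by-summand matching $M_i \leftrightarrow X_i$ together with the correct $R_j$, so that the Hom formula to which uniqueness is applied is the \emph{same} formula on both sides. I do not expect a genuine obstacle here, as the heavy lifting has already been done in Lemmas~\ref{l:from-silting}, \ref{L:nuinverseT} and~\ref{l:hom-duality}; once the characterisation is set up, both identifications are immediate.
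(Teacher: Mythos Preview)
Your proposal is correct and follows essentially the same approach as the paper: the paper's proof consists of the single sentence ``This follows from the Hom-duality: Lemma~\ref{L:nuinverseT}(a), Lemma~\ref{l:from-silting}(a) and Lemma~\ref{l:hom-duality},'' and what you have written is precisely a careful unpacking of that sentence into the two compositions, invoking the same three lemmas in the same roles. Your remark about bookkeeping the $R_j$'s is fair but, as you anticipated, causes no difficulty once one traces through the characterisation in Section~\ref{ss:simpledg}.
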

\begin{proof}
 This follows from the Hom-duality:
Lemma~\ref{L:nuinverseT} (a), Lemma~\ref{l:from-silting} (a) and
Lemma~\ref{l:hom-duality}.
\end{proof}

\begin{lemma}
 The maps $\phi_{23}$ and $\phi_{32}$ are inverse to each other.
\end{lemma}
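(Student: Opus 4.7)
The plan is to prove both compositions are identities by appealing directly to the structural results already established for $t$-structures and simple-minded collections.

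First, consider $\phi_{32}\circ\phi_{23}$. Start with a bounded $t$-structure $(\cc^{\leq 0},\cc^{\geq 0})$ on $\cd^b(\mod\Lambda)$ with length heart $\ca$. By definition, $\phi_{23}$ produces the (finite, by Lemma~\ref{l:heart}(a) and boundedness) collection $\{X_1,\ldots,X_r\}$ of isomorphism classes of simple objects of $\ca$. Applying $\phi_{32}$ then produces the pair of extension closures of $\{\Sigma^m X_i\mid m\geq 0\}$ and $\{\Sigma^m X_i\mid m\leq 0\}$ respectively. I would then invoke Lemma~\ref{l:heart}(d), which states exactly that these extension closures recover $\cc^{\leq 0}$ and $\cc^{\geq 0}$. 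So $\phi_{32}\circ\phi_{23}$ is the identity.

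Second, consider $\phi_{23}\circ\phi_{32}$. Start with a simple-minded collection $\{X_1,\ldots,X_r\}$ in $\cd^b(\mod\Lambda)$. By Proposition~\ref{p:simpleminded-to-t-str}, the pair $\phi_{32}(X_1,\ldots,X_r)=(\cc^{\leq 0},\cc^{\geq 0})$ is a bounded $t$-structure whose heart is a length category with simple objects precisely $X_1,\ldots,X_r$. Applying $\phi_{23}$ returns this same collection of simples (up to isomorphism and reordering), so $\phi_{23}\circ\phi_{32}$ is the identity on equivalence classes of simple-minded collections.

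The argument is essentially a bookkeeping combination of Lemma~\ref{l:heart}(d) (which controls how the aisle and co-aisle of a bounded $t$-structure with length heart are reconstructed from the simple objects) and Proposition~\ref{p:simpleminded-to-t-str} (which guarantees that the extension-closure construction applied to a simple-minded collection really produces a $t$-structure whose simples are exactly the given objects). There is no genuine obstacle here once those two ingredients are in hand; the only small subtlety is ensuring that the finiteness of the simple-minded collection matches the finiteness of the set of simple objects of the heart, but this is guaranteed by boundedness together with Lemma~\ref{l:heart}(a), which identifies $K_0(\ca)$ with $K_0(\cd^b(\mod\Lambda))$ and hence forces $\ca$ to have finitely many isomorphism classes of simples.
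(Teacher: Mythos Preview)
Your proof is correct and follows essentially the same approach as the paper: both directions are handled by invoking Proposition~\ref{p:simpleminded-to-t-str} (for $\phi_{23}\circ\phi_{32}$) and Lemma~\ref{l:heart} (for $\phi_{32}\circ\phi_{23}$), with the only difference being that you are more explicit in citing part (d) of the lemma and in remarking on the finiteness of the set of simples.
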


\begin{proof} Let $X_1,\ldots,X_r$ be a simple-minded collection in
$\cd^b(\mod\Lambda)$. It follows from
Proposition~\ref{p:simpleminded-to-t-str} that
$\phi_{23}\circ\phi_{32}(X_1,\ldots,X_r)=\{X_1,\ldots,X_r\}$.

Let $(\cc^{\leq 0},\cc^{\geq 0})$ be a bounded $t$-structure on
$\cd^b(\mod\Lambda)$ with length heart. It follows from
Lemma~\ref{l:heart} that $\phi_{32}\circ\phi_{23}(\cc^{\leq
0},\cc^{\geq 0})=(\cc^{\leq 0},\cc^{\geq 0})$.
\end{proof}

\begin{lemma}\label{l:transitivity} For a triple $i,j,k$ such that
$\phi_{ij}$, $\phi_{jk}$ and
$\phi_{ik}$ are defined, there is the equality $\phi_{ij}\circ\phi_{jk}=\phi_{ik}$. In
particular, $\phi_{31}$ and $\phi_{34}$ are bijective.
\end{lemma}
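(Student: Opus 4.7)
The plan is to verify the identity $\phi_{ij}\circ\phi_{jk}=\phi_{ik}$ case by case for each triple where all three maps are defined. A quick enumeration (using that $\phi_{12},\phi_{14},\phi_{21},\phi_{23},\phi_{31},\phi_{32},\phi_{34},\phi_{41}$ are the available maps) shows that the relevant triples are $(3,1,2)$, $(3,1,4)$, $(3,2,1)$, $(2,3,1)$, and $(3,4,1)$. In view of the three inverse pairs $\phi_{14}\leftrightarrow\phi_{41}$, $\phi_{23}\leftrightarrow\phi_{32}$, $\phi_{12}\leftrightarrow\phi_{21}$ already established, it suffices to prove two identities directly; the remaining three will then follow by formal manipulation, and the bijectivity of $\phi_{31}$ and $\phi_{34}$ will drop out as an immediate corollary.

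The first identity I would prove is $\phi_{31}\circ\phi_{14}=\phi_{34}$. Given a bounded co-$t$-structure $(\cc_{\geq 0},\cc_{\leq 0})$ with $M=\phi_{14}(\cc_{\geq 0},\cc_{\leq 0})$, Lemma~\ref{l:silting-and-co-t-str} combined with Lemma~\ref{l:from-silting}\,(b) expresses $\cc_{\geq 0}$ as the extension closure of $\{\Sigma^m M\mid m\leq 0\}$, so that $\Sigma^{-1}\cc_{\geq 0}$ is the extension closure of $\{\Sigma^m M\mid m<0\}$. The vanishing condition defining the aisle of $\phi_{34}(\cc_{\geq 0},\cc_{\leq 0})$ is then equivalent to $\Hom(M,\Sigma^m N)=0$ for all $m>0$, which is precisely the defining condition for the aisle of $\phi_{31}(M)$. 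The co-aisles match analogously, giving the required equality; this is essentially the computation already indicated in Section~\ref{ss:from-co-t-to-t}.

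The second identity is $\phi_{32}\circ\phi_{21}=\phi_{31}$. Let $M=M_1\oplus\cdots\oplus M_r$ be a basic silting object and $\{X_1,\ldots,X_r\}=\phi_{21}(M)$. By Lemma~\ref{l:from-silting}\,(a), the heart of the bounded $t$-structure $\phi_{31}(M)$ is a length category having the $X_i$ as a complete set of simple objects. By Proposition~\ref{p:simpleminded-to-t-str}, the $t$-structure $\phi_{32}(X_1,\ldots,X_r)$ is bounded with length heart having the same simples. Since a bounded $t$-structure on $\cd^b(\mod\Lambda)$ with length heart is determined by the isomorphism classes of simples in its heart (Lemma~\ref{l:heart}\,(d)), the two $t$-structures coincide.

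The remaining triples now fall out by formal manipulation, using that the pairs $\phi_{14},\phi_{41}$; $\phi_{23},\phi_{32}$; $\phi_{21},\phi_{12}$ are mutually inverse:
\begin{align*}
\phi_{23}\circ\phi_{31} &= \phi_{23}\circ\phi_{32}\circ\phi_{21}=\phi_{21},\\
\phi_{31}\circ\phi_{12} &= \phi_{32}\circ\phi_{21}\circ\phi_{12}=\phi_{32},\\
\phi_{34}\circ\phi_{41} &= \phi_{31}\circ\phi_{14}\circ\phi_{41}=\phi_{31}.
\end{align*}
Bijectivity of $\phi_{31}$ then follows as the composition $\phi_{32}\circ\phi_{21}$ of two bijections, and bijectivity of $\phi_{34}$ follows as the composition $\phi_{31}\circ\phi_{14}$ of two bijections. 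The only step requiring actual care is the shift bookkeeping in the first identity (matching the $\Sigma^{-1}$ appearing in the definition of $\phi_{34}$ against the $m>0$ threshold in the definition of $\phi_{31}$); everything else is routine unwinding of definitions against the structural results of the preceding sections.
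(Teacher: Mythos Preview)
Your proof is correct and follows essentially the same route as the paper. The paper reduces to two ``clear from the definitions'' identities, namely $\phi_{23}\circ\phi_{31}=\phi_{21}$ and $\phi_{31}\circ\phi_{14}=\phi_{34}$; you prove the equivalent pair $\phi_{32}\circ\phi_{21}=\phi_{31}$ and $\phi_{31}\circ\phi_{14}=\phi_{34}$ instead, which differ only by applying the already-known inverse pair $\phi_{23},\phi_{32}$, and then deduce the remaining triples formally exactly as the paper does. If anything, the paper's choice $\phi_{23}\circ\phi_{31}=\phi_{21}$ is a hair more direct (both sides are by definition the simple objects in the heart of $\phi_{31}(M)$), whereas your version goes through Lemma~\ref{l:heart}\,(d); but this is a cosmetic difference, not a substantive one.
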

\begin{proof} In view of the preceding three lemmas, it suffices to
prove $\phi_{23}\circ\phi_{31}=\phi_{21}$ and
$\phi_{31}\circ\phi_{14}=\phi_{34}$, which is clear from the
definitions.
\end{proof}

\section{Mutations and partial orders}\label{s:mutation} \label{section-mutation}

In this section we introduce mutations and partial orders on the four concepts
in Section~\ref{section-fourconcepts}, and we show that
the maps defined in Section~\ref{section-maps} commute with mutations and
preserve the partial orders.

Let $\cc$ be a Hom-finite Krull--Schmidt triangulated category with suspension functor $\Sigma$.

\subsection{Silting objects}\label{ss:silting-obj-mutation}

We follow~\cite{AiharaIyama12,BuanReitenThomas11} to define silting mutation.
Let $M$ be a silting object in $\cc$. We assume that $M$ is basic and $M=M_1\oplus\ldots\oplus M_r$ is a
decomposition into indecomposable objects. Let $i=1,\ldots,r$.
The \emph{left mutation} of $M$ at the direct summand $M_i$ is the object
$\mu^+_i(M)=M_i'\oplus\bigoplus_{j\neq i}M_j$ where $M_i'$ is the cone of
the minimal left $\add(\bigoplus_{j\neq i}M_j)$-approximation of $M_i$
$$\xymatrix{M_i\ar[r]& E.}$$
Similarly one can define the \emph{right mutation} $\mu^-_i(M)$.

\begin{theorem}\label{t:mutation-silting}
 \emph{(\cite[Theorem {2.31} and Proposition {2.33}]{AiharaIyama12})} The objects $\mu^+_i(M)$ and $\mu^-_i(M)$ are silting objects.
Moreover, $\mu^{+}_i\circ\mu^-_i(M)\cong M\cong\mu^{-}_i\circ\mu^+_i(M)$.
\end{theorem}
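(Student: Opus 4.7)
The plan is to verify both halves of the theorem working directly from the defining triangle of the mutation, then exhibit the inverse pair of mutations by recognising the same triangle as an approximation triangle in two directions.

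\textbf{Step 1: $\mu_i^+(M)$ is silting.} Write $N=\bigoplus_{j\neq i}M_j$ and let $M_i\xrightarrow{f} E\to M_i'\to\Sigma M_i$ be the triangle with $f$ a minimal left $\add(N)$-approximation. Since $M_i$ fits into a triangle built from $E\in\add(N)$ and $M_i'$, we have $M_i\in\thick(N\oplus M_i')$, so $\thick(\mu_i^+(M))=\thick(M)=\cc$. The vanishing $\Hom(\mu_i^+(M),\Sigma^m\mu_i^+(M))=0$ for $m>0$ splits into four pieces: $(N,\Sigma^m N)$ holds because $M$ is silting. For $(M_i',\Sigma^m N)$, applying $\Hom(-,\Sigma^m N)$ to the defining triangle and using that $\Hom(E,\Sigma^m N)=0$ for $m\geq 1$ and that $f^*:\Hom(E,N)\to\Hom(M_i,N)$ is surjective (left approximation property) yields the vanishing. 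For $(N,\Sigma^m M_i')$, apply $\Hom(N,-)$ and use $\Hom(N,\Sigma^m E)=0$ and $\Hom(N,\Sigma^{m+1}M_i)=0$ for $m\geq 1$. Finally $(M_i',\Sigma^m M_i')$: apply $\Hom(M_i',-)$ to the defining triangle and use the two vanishings just established, together with $\Hom(M_i',\Sigma^{m+1}M_i)=0$ for $m\geq 1$, which itself follows from $\Hom(-,\Sigma^{m+1}M_i)$ applied to the triangle.

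\textbf{Step 2: The mutations are mutually inverse.} The key observation is that the very triangle
\[
M_i\xrightarrow{f} E\to M_i'\to\Sigma M_i
\]
used to define $\mu_i^+(M)$ is, read backwards, the triangle witnessing a right $\add(N)$-approximation $E\to M_i'$ of $M_i'$: applying $\Hom(N,-)$ gives $\Hom(N,E)\twoheadrightarrow\Hom(N,M_i')$ since $\Hom(N,\Sigma M_i)=0$. Because $f$ was taken minimal on the left, the induced right approximation is minimal up to summands (and in particular the triangle coincides, up to isomorphism, with the minimal one used in the definition of $\mu_i^-$). Hence $\mu_i^-\circ\mu_i^+(M)\cong M_i\oplus N=M$. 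The reverse composition $\mu_i^+\circ\mu_i^-$ is handled dually by starting from the triangle $M_i''\to E'\to M_i\to \Sigma M_i''$ defining $\mu_i^-$ and observing it serves as a left $\add(N)$-approximation triangle for $M_i''$.

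\textbf{Anticipated obstacle.} The genuinely delicate point is Step 2, because the abstract definitions of $\mu_i^\pm$ demand \emph{minimal} approximations, whereas what the triangle directly provides is merely \emph{an} approximation of the correct variance. One must argue that the triangle produced from a minimal left approximation of $M_i$ is, after discarding any direct summand of the form $(Q\xrightarrow{=}Q,0)$, the minimal right approximation triangle of $M_i'$; this uses that in a Hom-finite Krull--Schmidt category minimality of an approximation is equivalent to the absence of such trivially split summands, together with the fact that the connecting map $M_i'\to\Sigma M_i$ has no direct summand that is an isomorphism onto a shifted indecomposable (otherwise $M_i'$ would have $\Sigma^{-1}$ of a summand of $\add N$ as a summand, contradicting minimality of $f$). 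Once this compatibility of minimalities is settled, the bijectivity $\mu_i^-\mu_i^+\cong\id\cong\mu_i^+\mu_i^-$ follows immediately from the uniqueness of cones.
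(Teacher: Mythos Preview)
The paper does not prove this theorem; it merely cites Aihara--Iyama. Your proposal supplies essentially the standard argument from that reference, and both steps are correct.

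One remark on Step~2. Your ``anticipated obstacle'' is the right worry, but the resolution you sketch is more involved than necessary. The clean way through is this: if $g\colon E\to M_i'$ were not right minimal, then in the Krull--Schmidt category one could split $E=E_1\oplus E_2$ with $g|_{E_2}=0$ and $g|_{E_1}$ right minimal; taking cocones gives $M_i\cong C_1\oplus E_2$ with $E_2\in\add(N)$. Since $M_i$ is indecomposable and (because $M$ is basic) not isomorphic to any $M_j$ for $j\neq i$, we get $E_2=0$. Thus $g$ was already the minimal right approximation, and the cocone is exactly $M_i$, giving $\mu_i^-\mu_i^+(M)\cong M$ on the nose. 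Your argument via the connecting map $M_i'\to\Sigma M_i$ is aiming at the same conclusion but is phrased awkwardly; indecomposability of $M_i$ is the point that makes the minimality issue evaporate.
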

\newcommand{\silt}{\opname{silt}}

Let $\silt\cc$ be the set of isomorphism classes of
basic tilting objects of $\cc$. The \emph{silting quiver} of $\cc$ has the
elements in $\silt\cc$ as vertices.
For $P,P'\in\silt\cc$, there are arrows from $P$ to $P'$ if and only if
$P'$ is obtained from $P$ by a left mutation, in which case there is
precisely one arrow. See~\cite[Section 2.6]{AiharaIyama12}.

For $P,P'\in\silt\cc$, define $P\geq P'$ if $\Hom(P,\Sigma^m P')=0$ for any $m>0$.
According to~\cite[Theorem {2.11}]{AiharaIyama12}, $\geq$ is a partial order on $\silt\cc$.

\begin{theorem}
\emph{(\cite[Theorem {2.35}]{AiharaIyama12})} The Hasse diagram of $(\silt\cc,\geq)$ is the silting quiver of $\cc$.
\end{theorem}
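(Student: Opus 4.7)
The plan is to prove the two directions needed to identify the Hasse diagram of $(\silt\cc,\geq)$ with the silting quiver: (i) every left mutation produces a covering relation in $\geq$, and (ii) every covering relation arises from a left mutation. Throughout I fix a basic silting object $M=M_1\oplus\cdots\oplus M_r$ with indecomposable summands, and I set $P=M$, $P'=\mu_i^+(M)=M_i'\oplus\bigoplus_{j\neq i}M_j$.

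First I would verify that $P\geq P'$ strictly. Applying $\Hom(P,\Sigma^m-)$ to the defining triangle $M_i\to E\to M_i'\to \Sigma M_i$ of the left mutation, the silting property of $P$ gives $\Hom(P,\Sigma^m M_i)=0=\Hom(P,\Sigma^m E)$ for $m>0$ (since $E\in\add(\bigoplus_{j\neq i}M_j)\subseteq\add P$), so $\Hom(P,\Sigma^m M_i')=0$ for $m>0$; combined with the analogous vanishing on the other summands, this yields $P\geq P'$. Strictness ($P\not\cong P'$) follows because $M_i$ is not a summand of $P'$: otherwise, since $P'$ has $r$ indecomposable summands by Theorem~\ref{t:silting-have-same-number-of-summands} and Theorem~\ref{c:silting-have-fixed-number-of-summands}, one would identify $M_i$ with $M_i'$, contradicting the non-triviality of the minimal approximation $M_i\to E$.

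Next, to show the cover property, I would take any basic silting object $N$ with $P\geq N\geq P'$ and show $N\cong P$ or $N\cong P'$. The strategy is to analyse the indecomposable summands of $N$ relative to the common part $\bigoplus_{j\neq i}M_j$. Using the triangle defining $M_i'$ together with the vanishing $\Hom(P,\Sigma^m N)=0=\Hom(N,\Sigma^m P')$ for $m>0$, one shows that every summand $M_j$ with $j\neq i$ occurs as a summand of $N$; since $N$ has precisely $r$ indecomposable summands, $N$ differs from $P$ in at most one summand $N_i$. A direct computation with the triangles $M_i\to E\to M_i'\to\Sigma M_i$ and the Hom-vanishing conditions then forces $N_i\in\{M_i,M_i'\}$, so $N\cong P$ or $N\cong P'$.

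For the converse direction, suppose $P>P'$ is a covering relation in $\silt\cc$. Both $P$ and $P'$ have $r$ indecomposable summands, and since $P\not\cong P'$ I may pick an indecomposable summand $M_i$ of $P$ which is not a summand of $P'$. I would then show that $\mu_i^+(P)\geq P'$: taking the minimal left $\add(\bigoplus_{j\neq i}M_j)$-approximation $M_i\to E$ and applying $\Hom(-,\Sigma^m P')$ to the defining triangle, the hypotheses $\Hom(P,\Sigma^m P')=0$ for $m>0$ and $\Hom(M_j,\Sigma^m P')=0$ for $j\neq i$ and $m>0$ combine to give $\Hom(M_i',\Sigma^m P')=0$, whence $\mu_i^+(P)\geq P'$. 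Since $P\geq\mu_i^+(P)\geq P'$ is a covering interval and $\mu_i^+(P)\not\cong P$, necessarily $\mu_i^+(P)\cong P'$, as required.

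The main obstacle is the covering argument in the second paragraph: pinning down the indecomposable $N_i$ of an interpolating silting object requires a careful analysis of approximation triangles and the minimality of $M_i\to E$, together with the fact that $\add(\bigoplus_{j\neq i}M_j)$-approximations are controlled by the silting property. Once this technical step is in place, the remaining arguments reduce to bookkeeping with the vanishing conditions defining $\geq$ and the constancy of the number of indecomposable summands established earlier.
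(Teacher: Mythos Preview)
First, note that the paper does not prove this theorem at all: it is quoted verbatim from \cite[Theorem~2.35]{AiharaIyama12} and used as a black box. So there is no ``paper's proof'' to compare against; what follows is an assessment of your outline on its own merits.

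Your plan has the right shape, but two of the key steps are genuine gaps rather than routine bookkeeping.

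\textbf{The converse direction.} You claim that from $P\geq P'$ and the choice of $M_i$ one gets $\Hom(M_i',\Sigma^m P')=0$ for all $m>0$. Applying $\Hom(-,\Sigma^m P')$ to the triangle $M_i\xrightarrow{f}E\to M_i'\to\Sigma M_i$ does give the vanishing for $m\geq 2$, but for $m=1$ the long exact sequence yields only a surjection
\[
\Hom(M_i,P')\big/\mathrm{im}\,f^*\;\twoheadrightarrow\;\Hom(M_i',\Sigma P'),
\]
so you would need every morphism $M_i\to P'$ to factor through the approximation $f\colon M_i\to E$. This is \emph{not} a consequence of $P\geq P'$ and ``$M_i$ not a summand of $P'$'': $f$ is only a left $\add(\bigoplus_{j\neq i}M_j)$-approximation, and $P'$ need not lie in $\add(\bigoplus_{j\neq i}M_j)$. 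In Aihara--Iyama the converse is obtained by a different route, using the classification of complements to the almost complete silting object $\bigoplus_{j\neq i}M_j$ (they form a linearly ordered family in which consecutive members are related by mutation), not by directly comparing $\mu_i^+(P)$ with an arbitrary covered $P'$.

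\textbf{The covering direction.} Your assertion that every $M_j$ with $j\neq i$ is a summand of an interpolating silting $N$ is correct, but it is not a ``direct computation'': it requires the characterisation of the co-heart of the co-$t$-structure attached to $N$ as $\add N=\cc_{\leq 0}(N)\cap\cc_{\geq 0}(N)$ together with the equivalences $\Hom(N,\Sigma^{>0}X)=0\Leftrightarrow X\in\cc_{\leq 0}(N)$ and $\Hom(X,\Sigma^{>0}N)=0\Leftrightarrow X\in\cc_{\geq 0}(N)$ (this is \cite[Proposition~2.14]{AiharaIyama12}). Likewise, the claim that the remaining summand $N_i$ is forced to be $M_i$ or $M_i'$ is exactly the two-complement statement for almost complete silting objects, which again is a nontrivial input from \cite{AiharaIyama12}. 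You should at least cite these results explicitly; as written, the phrase ``a direct computation with the triangles'' does not cover them.
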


Next we define (a generalisation of)
the Brenner--Butler tilting module for a
finite-dimensional algebra, and show that it is a
left mutation of the free module of rank $1$. The corresponding
right mutation is the Okuyama--Rickard complex,
see~\cite[Section 2.7]{AiharaIyama12}. Let $\Lambda$ be a
finite-dimensional basic algebra and $1=e_1+\ldots+e_n$ be a
decomposition of the unity into the sum of primitive idempotents
and $\Lambda=P_1\oplus\ldots\oplus P_n$ the corresponding decomposition of the free module of rank $1$.
Fix $i=1,\ldots,n$ and let $S_i$ be the corresponding simple module and let $S^+_i=\mathrm{D}(\Lambda/\Lambda
(1-e_i)\Lambda)$. Assume that
\begin{itemize}
 \item[$\cdot$] $S^+_i$ is not injective,
 \item[$\cdot$] the projective dimension of $\tau^{-1}_{\mod\Lambda}S^+_i$ is at most $1$.
\end{itemize}
\begin{definition}Define the \emph{BB tilting module} with respect to $i$ by
$$T=\tau^{-1}_{\mod\Lambda}S^+_i\oplus\bigoplus_{j\neq i}P_j.$$
We call it the \emph{APR tilting module} if
$\Lambda/\Lambda(1-e_i)\Lambda$ is projective as a $\Lambda$-module.
\end{definition}
When $\Lambda/\Lambda(1-e_i)\Lambda$ is {a division algebra} {(\ie there are no loops in the quiver of $\Lambda$ at the vertex $i$)}, this
specialises to the `classical' BB tilting
module~\cite{BrennerButler80} and APR tilting
module~\cite{AuslanderPlatzeckReiten79}. The following proposition generalises~\cite[Theorem 2.53]{AiharaIyama12}.

\begin{proposition}\label{p:bb-tilting}
\begin{itemize}
 \item[(a)] $T$ is isomorphic to the left mutation $\mu^+_i(\Lambda)$ of $\Lambda$.
 \item[(b)] $T$ is a tilting $\Lambda$-module of projective dimension at most $1$.
\end{itemize}
\end{proposition}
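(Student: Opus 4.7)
The plan is to identify $\mu^+_i(\Lambda)$ explicitly by constructing the minimal left $\add(\bigoplus_{j\neq i} P_j)$-approximation of $P_i$ from the minimal injective copresentation of $S^+_i$, and then checking that the resulting complex is isomorphic to $T$ in $K^b(\proj \Lambda)$.

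The key structural observation is that $S^+_i \subseteq I_i$ is the largest submodule of $I_i$ whose composition factors are all isomorphic to $S_i$. Indeed, any such submodule $M$ satisfies $M \cdot (1-e_i) = 0$---the claim holds for $S_i$ and propagates along a composition filtration---so $M$ is annihilated by the two-sided ideal $\Lambda(1-e_i)\Lambda$. Using the identification $I_i = \mathrm{D}(\Lambda e_i)$ together with the left-module isomorphism $\Lambda/\Lambda(1-e_i)\Lambda \cong \Lambda e_i/\Lambda(1-e_i)\Lambda e_i$, this forces $M$ into $\mathrm{D}(\Lambda/\Lambda(1-e_i)\Lambda) = S^+_i$. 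Consequently $I_i/S^+_i$ has no $S_i$ in its socle, and so in the minimal injective copresentation $0\to S^+_i \to I_i \to I^1$ we have $I^1 \in \add(\bigoplus_{j\neq i} I_j)$.

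Applying the equivalence $\nu^{-1}\colon \inj \Lambda \to \proj \Lambda$ yields a map $f\colon P_i \to E := \nu^{-1}(I^1)$ with $E \in \add(\bigoplus_{j\neq i} P_j)$ and $\cok(f) = \tau^{-1}_{\mod\Lambda} S^+_i$. Since $E$ has no $P_i$ summand, $\im(f) \subseteq \mathrm{rad}(E)$, so $E \twoheadrightarrow \tau^{-1}_{\mod\Lambda} S^+_i$ is a projective cover and $\im(f)$ is the first syzygy of $\tau^{-1}_{\mod\Lambda} S^+_i$. The hypothesis that $\tau^{-1}_{\mod\Lambda} S^+_i$ has projective dimension at most $1$ forces this syzygy to be projective; being a non-zero quotient of the local projective $P_i$, it must equal $P_i$, so $f$ is injective and we obtain a short exact sequence
\[0 \to P_i \xrightarrow{f} E \to \tau^{-1}_{\mod\Lambda} S^+_i \to 0.\]
To check that $f$ is a left approximation, applying $\Hom(-, P_j)$ for $j\neq i$ reduces matters to the vanishing of $\Ext^1(\tau^{-1}_{\mod\Lambda} S^+_i, P_j)$; by the Auslander--Reiten formula this group equals $\mathrm{D}\overline{\Hom}(P_j, S^+_i)$, which vanishes because $S^+_i$ has only $S_i$-composition factors while $\mathrm{top}(P_j) = S_j \not\cong S_i$. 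Minimality of $f$ is automatic. Hence $\mu^+_i(\Lambda) = \mathrm{cone}(f)\oplus \bigoplus_{j\neq i} P_j$, and injectivity of $f$ identifies $[P_i \to E]$ with $\tau^{-1}_{\mod\Lambda} S^+_i$ in $K^b(\proj \Lambda)$, proving (a).

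For (b), $T$ has projective dimension at most $1$ since each summand does. By part (a) and Theorem~\ref{t:mutation-silting}, $T$ is a silting object, so $\Hom_{K^b(\proj\Lambda)}(T, \Sigma^m T) = 0$ for all $m > 0$; for $m = 1$ this gives $\Ext^1_\Lambda(T,T)=0$, and $\Ext^{\geq 2}_\Lambda(T,T)$ vanishes from the projective dimension bound. Together with $\thick(T) = K^b(\proj \Lambda)$, this proves $T$ is a tilting $\Lambda$-module. The main technical hurdle is the structural identification of $S^+_i$ as the largest $S_i$-isotypic submodule of $I_i$; once this is in place, the remaining arguments are standard applications of Auslander--Reiten duality and properties of $\tau^{-1}$.
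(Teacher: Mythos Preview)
Your proof is correct and follows essentially the same approach as the paper: both start from a minimal injective copresentation of $S^+_i$, show that $I^1$ has no $I_i$-summand, apply $\nu^{-1}$ to obtain $f\colon P_i\to E$ with $E\in\add(\bigoplus_{j\neq i}P_j)$, and deduce injectivity of $f$ from the projective-dimension hypothesis. Your argument is more detailed than the paper's (which simply asserts that $\nu^{-1}f$ is a minimal left approximation and cites \cite[Theorem~2.32]{AiharaIyama12} for (b)); in particular, you verify the approximation property via the Auslander--Reiten formula and unpack the tilting argument for (b), and your structural identification of $S^+_i$ as the maximal $S_i$-isotypic submodule of $I_i$ is an explicit version of the paper's observation that $\Ext^1_\Lambda(S_i,S^+_i)=\Ext^1_{\Lambda/\Lambda(1-e_i)\Lambda}(S_i,S^+_i)=0$.
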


\begin{proof} {We modify the proof in}~\cite{AiharaIyama12}.
Take a minimal injective copresentation of $S^+_i$:
\[\xymatrix{0\ar[r] & S^+_i\ar[r] & \mathrm{D}(e_i\Lambda)\ar[r]^(0.6)f&I.}\]
Since
$\Ext^1_\Lambda(S_i,S^+_i)=\Ext^1_{\Lambda/\Lambda(1-e_i)\Lambda}(S_i,S^+_i)=0$,
it follows that the injective module $I$ belongs to
\mbox{$\add\mathrm{D}((1-e_i)\Lambda)$.} Applying the inverse Nakayama
functor $\nu^{-1}_{\mod\Lambda}$ yields an exact sequence
\[\xymatrix{P_i\ar[r]^(0.4){\nu^{-1}_{\mod\Lambda}f}&\nu^{-1}_{\mod\Lambda}I\ar[r]&\tau^{-1}_{\mod\Lambda}S^+_i\ar[r] & 0.}\]
Moreover, $\nu^{-1}_{\mod\Lambda}f$ is a minimal left approximation
of $P_i$ in $\add(P_j,j\neq i)$. Since the projective dimension of
$\tau^{-1}_{\mod\Lambda}S^+_i$ is at most $1$, it follows that
$\nu^{-1}_{\mod\Lambda}f$ is injective. This completes the proof for
(a).

(b) follows from~\cite[Theorem {2.32}]{AiharaIyama12}.
\end{proof}

\subsection{Simple-minded collections}\label{ss:simple-minded-collection-mutation}

Let $X_1,\ldots,X_r$ be a simple-minded collection in $\cc$ and fix
$i=1,\ldots,r$. Let $\cx_i$ denote the extension closure of $X_i$ in
$\cc$. Assume that for any $j$ the object $\Sigma^{-1}X_j$ admits a
minimal left approximation $g_j:\Sigma^{-1}X_j\rightarrow X_{ij}$ in
$\cx_i$. \begin{definition}The \emph{left mutation}
$\mu^+_i(X_1,\ldots,X_r)$ of $X_1,\ldots,X_r$ at $X_i$ is a new
collection $X_1',\ldots,X_r'$ such that $X_i'=\Sigma X_i$ and $X_j'$
($j\neq i$) is the cone of the above left approximation
\[\xymatrix{\Sigma^{-1}X_j\ar[r]^{g_j}&
X_{ij}.}\] Similarly one defines the \emph{right mutation}
$\mu^-_i(X_1,\ldots,X_r)$. \end{definition}
This generalises
Kontsevich--Soibelman's mutation of spherical
collections~\cite[Section 8.1]{KontsevichSoibelman08} {and appeared in \cite{KingQiu11} in the case of derived categories of acyclic quivers}.

\begin{proposition}\label{p:mutation-simple-minded}
\begin{itemize}
\item[(a)] $\mu^{+}_i\circ\mu^-_i(X_1,\ldots,X_r)\cong(X_1,\ldots,X_r)\cong \mu^{-}_i\circ\mu^+_i(X_1,\ldots,X_r)$.
\item[(b)]
Assume that
\begin{itemize}
\item[$\cdot$] for any $j\neq i$ the object $\Sigma^{-1}X_j$ admits a
minimal left approximation $g_j:\Sigma^{-1}X_j\rightarrow X_{ij}$ in
$\cx_i$,
\item[$\cdot$] the induced map
$\Hom(g_j,X_i):\Hom(X_{ij},X_i)\rightarrow\Hom(\Sigma^{-1}X_j,X_i)$
is injective,
\item[$\cdot$] the induced map
$\Hom(g_j,\Sigma X_i):\Hom(X_{ij},\Sigma
X_i)\rightarrow\Hom(\Sigma^{-1}X_j,\Sigma X_i)$ is injective.
\end{itemize}
Then the collection $\mu^+_i(X_1,\ldots,X_r)$ is simple-minded.
\item[(c)]
Assume that
\begin{itemize}
\item[$\cdot$] for any $j\neq i$ the object $X_j$ admits a
minimal right approximation $g_j^-:\Sigma^{-1}X_{ij}^-\rightarrow
X_{j}$ in $\Sigma^{-1}\cx_i$,
\item[$\cdot$] the induced map
$\Hom(X_i,\Sigma g_j^-):\Hom(X_{i},
X_{ij}^-)\rightarrow\Hom(X_i,\Sigma X_j)$ is injective,
\item[$\cdot$] the induced map
$\Hom(X_i,\Sigma^2 g_j^-,):\Hom(X_i,\Sigma
X_{ij}^-)\rightarrow\Hom(X_i,\Sigma^2 X_j)$ is injective.
\end{itemize}
Then the collection $\mu^-_i(X_1,\ldots,X_r)$ is simple-minded.
\end{itemize}
\end{proposition}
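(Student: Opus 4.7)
The plan is to establish (a) via rotation of the defining triangles, and (b) by verifying the three axioms of a simple-minded collection directly; (c) is dual to (b).

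For (a), start with the defining triangle $\Sigma^{-1}X_j\xrightarrow{g_j}X_{ij}\to X_j'\to X_j$. Rotate it to $X_{ij}\to X_j'\to X_j\to\Sigma X_{ij}$ and observe that the extension closure of $X_i'=\Sigma X_i$ is $\Sigma\cx_i$, so the right mutation of $(X_1',\ldots,X_r')$ at $X_i'$ seeks minimal right approximations in $\cx_i$. I will verify that $X_{ij}\to X_j'$ is a minimal right $\cx_i$-approximation of $X_j'$: for any $Z\in\cx_i$ the next term in the long exact sequence is $\Hom(Z,X_j)$, which vanishes by induction on the extension filtration of $Z$ (with base $\Hom(X_i,X_j)=0=\Hom(X_i,\Sigma^{-1}X_j)$ for $j\neq i$ from simple-mindedness). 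The cone is then $X_j$, so $\mu^-_i\circ\mu^+_i=\mathrm{id}$; the opposite composition is handled dually.

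For (b), write $(X_1',\ldots,X_r')=\mu^+_i(X_1,\ldots,X_r)$. Generation is immediate from the triangles since $X_{ij}\in\cx_i\subseteq\thick(X_i')$. For the negative Hom-vanishing axiom I first record auxiliary vanishings, each by induction on the extension filtration of $A\in\cx_i$: $\Hom(X_i,\Sigma^m A)=0$ for $m<0$, $\Hom(X_k,\Sigma^m A)=0$ for $k\neq i$ and $m\leq 0$, and $\Hom(A,\Sigma^m X_k)=0$ for $m<0$. Together with the simple-mindedness of $(X_\bullet)$, these handle $\Hom(X_a',\Sigma^m X_b')$ for $m<0$ in every case except $(a,b,m)=(j,i,-1)$ with $j\neq i$; that case follows from
\[\Hom(X_j,X_i)\to\Hom(X_j',X_i)\to\Hom(X_{ij},X_i)\xrightarrow{g_j^*}\Hom(\Sigma^{-1}X_j,X_i),\]
whose first term is zero and whose last map is injective by the first hypothesis, giving $\Hom(X_j',X_i)=0$.

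For the third axiom, I first propagate the injectivity hypotheses to all $A\in\cx_i$: the approximation property of $g_j$ makes $g_j^*\colon\Hom(X_{ij},A)\to\Hom(\Sigma^{-1}X_j,A)$ surjective for $A\in\cx_i$, and an induction along the extension filtration using the Mono Four-Lemma (with the auxiliary vanishing $\Hom(X_j,A)=0$ for $A\in\cx_i$ ensuring the required left-side surjectivity in the inductive step) upgrades the hypotheses to: $g_j^*$ is bijective on $\Hom(X_{ij},A)$ and injective on $\Hom(X_{ij},\Sigma A)$ for every $A\in\cx_i$. With these strengthened forms, standard long-exact-sequence chases yield $\Hom(X_j',\Sigma X_i)=\Hom(X_j',X_i')=0$ (bijectivity on $X_i$ kills the incoming connecting map, and then injectivity on $\Sigma X_i$ kills the surviving piece), $\Hom(X_i',X_j')=0$ directly, $\Hom(X_j',X_k')=0$ for distinct $j,k\neq i$ (via $\Hom(X_j',X_{ik})=\ker(g_j^*\text{ at }X_{ik})=0$), and a comparison along the triangle for $X_j'$ gives $\End(X_j')\cong\End(X_j)$, which is a division algebra. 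Part (c) is dual. The main obstacle is the third axiom, and in particular the propagation step together with the degree-zero orthogonality $\Hom(X_j',X_i')=0$: both rest on a subtle interplay of the approximation property with the two injectivity hypotheses, with intricate bookkeeping in the long exact sequences.
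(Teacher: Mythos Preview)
Your argument is correct and follows exactly the route the paper indicates: for (a) the paper records the single observation that in the triangle $\Sigma^{-1}X_j\xrightarrow{g_j}X_{ij}\to X_j'\to X_j$ the map $g_j$ is a minimal left $\cx_i$-approximation if and only if $X_{ij}\to X_j'$ is a minimal right $\cx_i$-approximation (you verify one direction directly via the vanishing $\Hom(Z,X_j)=0$ for $Z\in\cx_i$, which amounts to the same thing), and for (b) and (c) the paper simply says ``the proof uses long exact $\Hom$ sequences induced from the defining triangles of the $X_j'$; we leave it to the reader''---your proposal is precisely such a reader's verification, including the propagation step and the delicate case $\Hom(X_j',\Sigma X_i)=0$.
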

\begin{proof}
(a) Because in the triangle
\[\xymatrix{\Sigma^{-1}X_j\ar[r]^{g_j}&X_{ij}\ar[r]^{g_j^-}&X_j'\ar[r]&X_j}\]
$g_j$ is a minimal left approximation of $\Sigma^{-1}X_j$ in $\cx_i$
if and only if $g_j^-$ is a minimal right approximation of $X_j$ in
$\cx_i=\Sigma^{-1}(\Sigma \cx_i)$.

(b) and (c) The proof uses long exact Hom sequences induced from the
defining triangles of the $X_j'$. We leave it to the reader.
\end{proof}

\begin{remark}
In the course of the proof of Proposition~\ref{p:mutation-simple-minded} (b) and (c), one notices that the collection of endomorphism algebras of the mutated simple-minded collection is the same as that of the given simple-minded collection.
\end{remark}

If $\Hom(X_i,\Sigma X_i)=0$, then $\cx_i=\add (X_i)$. In this case,
all six assumptions in Proposition~\ref{p:mutation-simple-minded} (b) and (c) are satisfied.

\begin{lemma} Let $\Lambda$ be a finite-dimensional algebra and let $X_1,\ldots,X_r$ be a simple-minded collection in $\cd^b(\mod\Lambda)$. Let $i=1,\ldots,r$.
Then the left mutation $\mu^+_i(X_1,\ldots,X_r)$ and the right
mutation $\mu^-_i(X_1,\ldots,X_r)$ are again simple-minded
collections.
\end{lemma}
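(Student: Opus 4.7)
The plan is to verify the three hypotheses of Proposition~\ref{p:mutation-simple-minded}(b) for the left mutation; the right mutation case is entirely dual. The key enabling fact is that, by Proposition~\ref{p:simpleminded-to-t-str} and Corollary~\ref{c:length-heart}, the collection $X_1,\ldots,X_r$ generates a bounded $t$-structure on $\cd^b(\mod\Lambda)$ whose heart $\ca$ is equivalent to $\mod\Gamma$ for some finite-dimensional algebra $\Gamma$, with $X_1,\ldots,X_r$ the simples of $\ca$. Under this identification, $\cx_i$ becomes the Serre subcategory of $\ca$ consisting of modules whose composition factors are all $X_i$, and all relevant Hom- and Ext-computations take place in this concrete setting. (Note that an extension in $\cc$ with outer terms in $\ca$ automatically lies in $\ca$, so extension closure in $\cc$ coincides with extension closure in $\ca$.)

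To build $g_j$ for $j\neq i$, I start from a projective cover $P_j\twoheadrightarrow X_j$ in $\ca$ with kernel $K_j$, yielding a triangle $\Sigma^{-1}X_j\xrightarrow{\alpha_j}K_j\to P_j\to X_j$. Since $\ca$ is Artinian and $\cx_i$ is Serre, $K_j$ admits a unique smallest submodule $N_j$ with $Y_j:=K_j/N_j\in\cx_i$; let $\pi_j:K_j\twoheadrightarrow Y_j$ denote the projection and set $g_j:=\pi_j\circ\alpha_j$. Applying $\Hom(-,Z)$ to the triangle for $Z\in\cx_i$, and using the vanishings $\Hom(P_j,Z)=0=\Hom(X_j,Z)$ (any nonzero element of $Z$ generates a submodule whose top contains $X_j$, contradicting $Z\in\cx_i$), combined with the universal property of the Serre reflection $K_j\twoheadrightarrow Y_j$, shows that $g_j$ is a left $\cx_i$-approximation. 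The minimal version is obtained by removing superfluous direct summands of $Y_j$ in the usual Krull--Schmidt fashion.

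Condition (ii), injectivity of $\Hom(g_j,X_i)$, is immediate, since the approximation property applied with $Z=X_i$ in fact gives an isomorphism $\Hom(Y_j,X_i)\cong\Hom(\Sigma^{-1}X_j,X_i)$. For condition (iii), write $\Hom(g_j,\Sigma X_i)=\alpha_j^*\pi_j^*$: the map $\alpha_j^*$ is injective because $\Hom(P_j,\Sigma X_i)=\Ext^1_\ca(P_j,X_i)=0$ by projectivity of $P_j$; the map $\pi_j^*$ fits in the long exact sequence obtained by applying $\Hom(-,\Sigma X_i)$ to $N_j\to K_j\to Y_j\to\Sigma N_j$, and its kernel is the image of the connecting map from $\Hom(N_j,X_i)$. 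It therefore suffices to show $\Hom(N_j,X_i)=0$. This is where the minimality of $N_j$ plays its decisive role: any nonzero (necessarily surjective) $f:N_j\to X_i$ would have kernel $N_j'\subsetneq N_j$, and $K_j/N_j'$ would then be an extension of $X_i$ by $Y_j$ inside the Serre subcategory $\cx_i$, contradicting minimality of $N_j$.

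For the right mutation, the dual construction uses the injective envelope $X_j\hookrightarrow I_j$ in $\ca$ with cokernel $C_j$, takes $W_j\subseteq C_j$ to be the \emph{largest} submodule lying in $\cx_i$, and defines $g_j^-:\Sigma^{-1}W_j\to X_j$ by pullback along the connecting map $\Sigma^{-1}C_j\to X_j$. The two injectivity conditions of Proposition~\ref{p:mutation-simple-minded}(c) reduce, via the same pattern of long exact sequences (now using $\Hom(X_i,I_j)=0$ and $\Ext^1_\ca(X_i,I_j)=0$), to showing $\Hom(X_i,C_j/W_j)=0$, which in turn follows from the maximality of $W_j$ by the dual of the minimality argument. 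I expect condition (iii), and its dual, to be the main technical obstacle: the naive computation fails, and the statement is rescued precisely by the minimality/maximality property of the Serre reflection in the heart.
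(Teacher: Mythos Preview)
Your proof is correct and follows the same overall architecture as the paper's: both identify the heart $\ca$ with $\mod\Gamma$ via Corollary~\ref{c:length-heart}, construct the approximation from the projective cover $P_j\twoheadrightarrow X_j$ by taking the maximal quotient of the syzygy lying in $\cx_i$, and then verify the three hypotheses of Proposition~\ref{p:mutation-simple-minded}(b). The endpoint of the hardest verification is also the same: both arguments ultimately reduce condition~(iii) to the vanishing $\Hom(N_j,X_i)=0$ (in the paper's notation, $\Hom_\Gamma(\ker\gamma,X_i)=0$), deduced from the maximality of the quotient $Y_j$.

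Where you genuinely diverge is in the mechanism for reaching that reduction. The paper invokes the Beilinson--Bernstein--Deligne realisation functor $\mathsf{real}:\cd^b(\mod\Gamma)\to\cd^b(\mod\Lambda)$ to compare $\Ext^2_\Gamma$ with $\Hom_\Lambda(-,\Sigma^2-)$, and then passes to second syzygies and uses the snake lemma inside $\mod\Gamma$. You bypass all of this by factoring $g_j=\pi_j\circ\alpha_j$ through $K_j$ and showing separately that $\alpha_j^*$ and $\pi_j^*$ are injective on $\Hom(-,\Sigma X_i)$; this only requires the degree-one identification $\Hom_\cc(M,\Sigma N)\cong\Ext^1_\ca(M,N)$, which holds for the heart of any $t$-structure and does not need the realisation functor. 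Similarly, for condition~(ii) the paper does a dimension count over $\End(X_i)$, whereas you observe directly that $\Hom(g_j,X_i)$ is an isomorphism. Your route is therefore more elementary and more portable: it would work in any Hom-finite Krull--Schmidt triangulated category once one knows the heart is module-finite, without appealing to the ambient algebraic structure needed for $\mathsf{real}$. The price is that you do not obtain any information about higher Ext comparison, but none is needed here.

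One small point: you should note that although your $g_j$ as constructed need not be minimal, the injectivity conditions~(ii) and~(iii) pass to any direct summand of $g_j$, so they hold for the minimal approximation as well.
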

\begin{proof}
 We will show that the three assumptions in Proposition~\ref{p:mutation-simple-minded} (b) are satisfied,
so the left-mutated collection $\mu^+_i(X_1,\ldots,X_r)$ is a simple-minded collection. The case
for $\mu^-_i(X_1,\ldots,X_r)$ is similar.

By Proposition~\ref{p:simpleminded-to-t-str}, $X_1,\ldots,X_r$ are
the simple objects in the heart of a bounded $t$-structure on
$\cd^b(\mod\Lambda)$. Moreover, by Corollary~\ref{c:length-heart},
the heart is equivalent to $\mod\Gamma$ for some finite-dimensional
algebra $\Gamma$. We identify $\mod\Gamma$ with the heart via this
equivalence. In this way we consider $X_1,\ldots,X_r$ as simple
$\Gamma$-modules.

\newcommand{\real}{\opname{real}}
By~\cite[Section 3.1]{BeilinsonBernsteinDeligne82}, there is a
triangle functor
\[\real:\cd^b(\mod\Gamma)\rightarrow\cd^b(\mod\Lambda)\]
such that
\begin{itemize}
 \item[--] restricted to $\mod\Gamma$, $\real$ is the identity;
 \item[--] for $M,N\in\mod\Gamma$, the induced map $$\Ext^1_\Gamma(M,N)=\Hom_{\cd^b(\mod\Gamma)}(M,\Sigma N)\rightarrow\Hom_{\cd^b(\mod\Lambda)}(M,\Sigma N)$$
is bijective;
 \item[--] for $M,N\in\mod\Gamma$, the induced map $$\Ext^2_\Gamma(M,N)=\Hom_{\cd^b(\mod\Gamma)}(M,\Sigma^2 N)\rightarrow\Hom_{\cd^b(\mod\Lambda)}(M,\Sigma^2 N)$$
is injective.
\end{itemize}

For $j=1,\ldots,r$, there is a short exact sequence
\[\xymatrix{0\ar[r] &\Omega X_j\ar[r] & P_j \ar[r] & X_j\ar[r] & 0,}\]
where $P_j$ is the projective cover of $X_j$ and $\Omega X_j$ is the
first syzygy of $X_j$. Let $\cx_i$ be the extension closure of $X_i$
in $\mod\Gamma$ (by the second property of $\real$
listed in the preceding paragraph, this
is the same as the extension closure of $X_i$ in
$\cd^b(\mod\Lambda)$) and let $X_{ij}$ denote the maximal quotient
of $\Omega X_j$ belonging to $\cx_i$. There is the following push-out
diagram
\[\xymatrix{&0\ar[r] &\Omega X_j\ar[d]\ar[r] & P_j\ar[d] \ar[r] & X_j\ar[r] \ar@{=}[d]& 0\\
\xi:\hspace{-20pt}&0\ar[r] & X_{ij}\ar[r]&X_j'\ar[r] & X_j\ar[r] &
0}\]

(a) Suppose we are given an object $Y$ of $\cx_i$ and a short exact
sequence
\[\xymatrix{\eta:\hspace{-20pt}&0\ar[r]&Y\ar[r] &Z\ar[r] & X_j\ar[r] & 0.}\]
Then there is a commutative diagram
\[\xymatrix{&0\ar[r] &\Omega X_j\ar[d]\ar[r] & P_j\ar[d] \ar[r] & X_j\ar[r] \ar@{=}[d]& 0\\
\eta:\hspace{-20pt}&0\ar[r]&Y\ar[r]&Z \ar[r] & X_j\ar[r] & 0.}\]
Because $X_{ij}$ is the maximal quotient of $\Omega X_j$ belonging
to $\cx_i$, this morphism of short exact sequences factors through
$\xi$. In other words, the morphism $g_j:X_j\rightarrow\Sigma
X_{ij}$ corresponding to $\xi$ is a minimal left
$\Sigma\cx_i$-approximation.

(b) The dimension of the space $\Hom_{\cd^b(\mod\Lambda)}(\Sigma
X_{ij},\Sigma X_i)\cong\Hom_{\Gamma}(X_{ij},X_i)$ {over $\End(X_i)$} equals the number
of indecomposable direct summands of $\mathrm{top}(X_{ij})$, which
clearly equals the dimension of
$\Ext^1_{\Gamma}(X_j,X_i)\cong\Hom_{\cd^b(\mod\Lambda)}(X_j,\Sigma
X_i)$ {over $\End(X_i)$}. Therefore the induced map
\[\Hom(g_j,\Sigma X_i):\Hom_{\cd^b(\mod\Lambda)}(\Sigma X_{ij},\Sigma X_i)\longrightarrow \Hom_{\cd^b(\mod\Lambda)}(X_j,\Sigma X_i)\]
is injective since by (a) it is surjective.

(c) First observe that the following diagram is commutative
\[\xymatrix{
\Hom_{\cd^b(\mod\Lambda)}(\Sigma X_{ij},\Sigma^2 X_i)\ar[rr]^{\Hom_{\Lambda}(g_j,\Sigma^2 X_i)}&&\Hom_{\cd^b(\mod\Lambda)}(X_j,\Sigma^2 X_i)\\
\Hom_{\cd^b(\mod\Gamma)}(\Sigma X_{ij},\Sigma^2
X_i)\ar[u]^{\real}\ar[rr]^{\Hom_{\Gamma}(g_j,\Sigma^2
X_i)}&&\Hom_{\cd^b(\mod\Gamma)}(X_j,\Sigma^2
X_i)\ar[u]^{\real}}\] The left vertical map is a bijection and
the right vertical map is injective, so to prove the injectivity of
$\Hom_{\Lambda}(g_j,\Sigma^2 X_i)$ it suffices to prove the
injectivity of $\Hom_{\Gamma}(g_j,\Sigma^2 X_i)$. Writing
\[\Hom_{\cd^b(\mod\Gamma)}(\Sigma X_{ij},\Sigma^2
X_i)=\Ext^1_\Gamma(X_{ij},X_i)=\Hom_\Gamma(\Omega X_{ij},X_i)\] and
\[\Hom_{\cd^b(\mod\Gamma)}(X_j,\Sigma^2 X_i)=\Ext^2_{\Gamma}(X_j,X_i)=\Ext^1_\Gamma(\Omega X_j,X_i)=\Hom_{\Gamma}(\Omega^2 X_j,X_i),\]
we see that
$\Hom_{\Gamma}(g_j,\Sigma^2 X_i)$ is $\Hom_{\Gamma}(\alpha,X_i)$,
where $\alpha$ is defined by the following commutative diagram
\[\xymatrix{0\ar[r]&\Omega^2 X_j\ar[r]\ar[d]^\alpha&P^0\ar[r]\ar[d]^\beta&\Omega X_j\ar[r]\ar[d]^\gamma & 0\\
0\ar[r]&\Omega X_{ij}\ar[r]&Q^0\ar[r] & X_{ij}\ar[r] & 0,}\] Here,
$P^0$ and $Q^0$ are projective covers of $\Omega X_j$ and $X_{ij}$,
respectively, and $\gamma$ is the canonical quotient map. As the
map $\gamma$ is surjective, the map $\beta$ is a split epimorphism.
By the snake lemma, there is {an} exact sequence
\[\xymatrix{\ker(\gamma)\ar[r] & \cok(\alpha)\ar[r]& 0.}\]
Since $X_{ij}$ is the maximal quotient of $\Omega X_j$ in $\cx_i$,
it follows that $\Hom_\Gamma(\ker(\gamma),X_i)=0$, and hence
$\Hom_\Gamma(\cok(\alpha),X_i)=0$. Therefore
$\Hom_\Gamma(\alpha,X_i)$ is injective.
\end{proof}

For two simple-minded collections $\{X_1,\ldots,X_r\}$ and $\{X'_1,\ldots,X'_r\}$ of $\cc$,
define $$\{X_1,\ldots,X_r\}\geq \{X'_1,\ldots,X'_r\}$$ if $\Hom(X'_i,\Sigma^m X_j)=0$ for any $m<0$ and any
$i,j=1,\ldots,r$.

\begin{proposition}\label{p:partial-order-simple-minded}
The relation $\geq$ defined above is a partial order on the set of equivalence classes of simple-minded
collections of $\cc$.
\end{proposition}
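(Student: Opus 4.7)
The plan is to prove the three axioms (reflexivity, antisymmetry, transitivity) by transporting the relation $\geq$ through the bijection $\phi_{32}$ of Section~\ref{section-bijections} to the partial order on bounded $t$-structures given by inclusion of aisles. Reflexivity is immediate: the defining vanishing condition of a simple-minded collection gives $\Hom(X_i,\Sigma^m X_j)=0$ for all $m<0$, so $\{X_1,\ldots,X_r\}\geq\{X_1,\ldots,X_r\}$.

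The central step is the following reformulation. Let $(\cc^{\leq 0},\cc^{\geq 0})$ and $(\cc'^{\leq 0},\cc'^{\geq 0})$ be the bounded $t$-structures attached to $\{X_1,\ldots,X_r\}$ and $\{X'_1,\ldots,X'_r\}$ via $\phi_{32}$. I claim
\[\{X_1,\ldots,X_r\}\geq\{X'_1,\ldots,X'_r\}\ \Longleftrightarrow\ \cc'^{\leq 0}\subseteq\cc^{\leq 0}.\]
By Proposition~\ref{p:simpleminded-to-t-str}, $\cc^{\geq 1}=\Sigma^{-1}\cc^{\geq 0}$ is the extension closure of $\{\Sigma^m X_j \mid m\leq -1\}$. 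Since $\Hom(X'_i,-)$ is cohomological and sends triangles to long exact sequences, the vanishing $\Hom(X'_i,\Sigma^m X_j)=0$ for all $m<0$ and all $j$ propagates to $\Hom(X'_i,\cc^{\geq 1})=0$, that is, $X'_i\in{}^\perp\cc^{\geq 1}=\cc^{\leq 0}$. Conversely, if all $X'_i\in\cc^{\leq 0}$, then since $\cc^{\leq 0}$ is stable under $\Sigma$ (in the positive direction) and under extensions, it contains the extension closure $\cc'^{\leq 0}$ of $\{\Sigma^m X'_i\mid m\geq 0\}$; and in either direction one recovers the numerical vanishing by applying $\Hom(-,\Sigma^m X_j)$ to $X'_i$.

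With the claim in hand, transitivity is inherited from the transitivity of set inclusion among aisles. For antisymmetry, if both $\cc'^{\leq 0}\subseteq\cc^{\leq 0}$ and $\cc^{\leq 0}\subseteq\cc'^{\leq 0}$, then the two $t$-structures coincide, so they have the same heart; since the simple objects of a length abelian category are determined up to isomorphism and reordering, the two simple-minded collections coincide as equivalence classes. Thus $\geq$ is antisymmetric on equivalence classes.

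The only mildly delicate point is propagating the vanishing from generators to the whole co-aisle $\cc^{\geq 1}$; this is the routine extension-closure argument using that $\Hom(X'_i,-)$ is cohomological, so no serious obstacle arises. Everything else is formal from the bijection $\phi_{32}$ and Lemma~\ref{l:heart}.
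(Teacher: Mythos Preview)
Your proof is correct and follows essentially the same route as the paper: both establish reflexivity directly from the definition, then prove the key equivalence $\{X_1,\ldots,X_r\}\geq\{X'_1,\ldots,X'_r\}\Leftrightarrow\cc'^{\leq 0}\subseteq\cc^{\leq 0}$ by propagating the Hom-vanishing through the extension closures defining the aisles (via Proposition~\ref{p:simpleminded-to-t-str}), and deduce transitivity and antisymmetry from inclusion of aisles. One cosmetic remark: you invoke ``the bijection $\phi_{32}$'', but neither your argument nor the paper's actually uses bijectivity---only the construction of the $t$-structure and the fact (Proposition~\ref{p:simpleminded-to-t-str}) that its simple objects recover the collection; this matters because the proposition is stated for a general Hom-finite Krull--Schmidt $\cc$, where bijectivity of $\phi_{32}$ has not been established.
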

\begin{proof}
The reflexivity is clear by the definition
of a simple-minded collection. Next we show the antisymmetry and transitivity.
Let $\{X_1,\ldots,X_r\}$ and $\{X'_1,\ldots,X'_r\}$ be two simple-minded collections of $\cc$ and let
$(\cc^{\leq 0},\cc^{\geq 0})$ and $(\cc'^{\leq 0},\cc'^{\geq 0})$ be the
corresponding $t$-structures
given in Proposition~\ref{p:simpleminded-to-t-str} (the general case). Then
\begin{eqnarray*}
\{X_1,\ldots,X_r\}\geq \{X'_1,\ldots,X'_r\}
&\Leftrightarrow& \Hom(X'_i,\Sigma^m X_j)=0 \text{ for any } m<0 \text{ and }
i,j=1,\ldots,r\\
&\Leftrightarrow&\Hom(\Sigma^{m'}X'_i,\Sigma^m X_j)=0 \text{ for any } m<0,~m'\geq 0 \\
&&\text{ and }
i,j=1,\ldots,r\\
&\Leftrightarrow& \cc'^{\leq 0}\perp \Sigma^{-1}\cc^{\leq 0}\\
&\Leftrightarrow& \cc'^{\leq 0}\subseteq\cc^{\leq 0}.
\end{eqnarray*}

(a) If $\{X_1,\ldots,X_r\}\geq \{X'_1,\ldots,X'_r\}$ and
$\{X'_1,\ldots,X'_r\}\geq \{X_1,\ldots,X_r\}$, then
$(\cc^{\leq 0},\cc^{\geq 0})=(\cc'^{\leq 0},\cc'^{\geq 0})$. In particular, the two
$t$-structures have the same heart. Therefore, both
$\{X_1,\ldots,X_r\}$ and $\{X'_1,\ldots,X'_r\}$ are complete sets of pairwise non-isomorphic
simple objects of the same abelian category,
and hence they are equivalent.

(b) Let $\{X''_1,\ldots,X''_r\}$ be a third simple-minded collection of $\cc$, with corresponding $t$-structure
$(\cc''^{\leq 0},\cc''^{\geq 0})$. Suppose $\{X_1,\ldots,X_r\}\geq \{X'_1,\ldots,X'_r\} $ and
$\{X'_1,\ldots,X'_r\}\geq \{X''_1,\ldots,X''_r\}$. Then $\cc''^{\leq 0}\subseteq\cc'^{\leq 0}\subseteq\cc^{\leq 0}$. Consequently,
$\{X_1,\ldots,X_r\}\geq \{X''_1,\ldots,X''_r\}$.
\end{proof}

\subsection{t-structures}\label{ss:t-str-mutation}

Let $(\cc^{\leq 0},\cc^{\geq 0})$ be a bounded $t$-structure of
$\cc$ such that the heart $\ca$ is a length category which has only
finitely many simple objects $S_1,\ldots,S_r$ up to isomorphism. Then $\{S_1,\ldots, S_r\}$ is a simple-minded collection.
Let
$\cf=\cs_i$ be the extension closure of $S_i$ in $\ca$ and let
$\ct={}^\perp\cs_i$ be the left perpendicular category of $\cs_i$ in
$\ca$. It is easy to show that $(\ct,\cf)$ is a torsion pair of
$\ca$. Define the \emph{left mutation} $\mu^+_i(\cc^{\leq
0},\cc^{\geq 0})=(\cc'^{\leq 0},\cc'^{\geq 0})$ by
\begin{eqnarray*}
 \cc'^{\leq 0}&=&\{M\in\cc\mid H^m(M)=0 \text{ for } m>0 \text{ and } H^0(M)\in\ct\},\\
 \cc'^{\geq 0}&=&\{M\in\cc\mid H^m(M)=0 \text{ for } m<-1 \text{ and } H^{-1}(M)\in \cf\}.
\end{eqnarray*}
Similarly one defines the \emph{right mutation} $\mu^-_i(\cc^{\leq
0},\cc^{\geq 0})$. These mutations provide an effective method to compute the space of Bridgeland's stability conditions on $\cc$ by gluing different charts, see~\cite{Bridgeland05,Woolf10}.

\begin{proposition}\label{p:mutation-t-str}
 The pairs $\mu^+_i(\cc^{\leq
0},\cc^{\geq 0})$ and $\mu^-_i(\cc^{\leq 0},\cc^{\geq 0})$ are
bounded $t$-structures of $\cc$. The heart of $\mu_i^+(\cc^{\leq 0},\cc^{\geq 0})$ has a torsion pair
$(\Sigma\cf,\ct)$ and the heart of $\mu_i^-(\cc^{\leq 0},\cc^{\geq 0})$ has a torsion pair
$(\cs_i^\perp,\Sigma^{-1}\cs_i)$.
Moreover, $\mu^+_i\circ\mu^-_i(\cc^{\leq
0},\cc^{\geq 0})=(\cc^{\leq
0},\cc^{\geq 0})=\mu^-_i\circ\mu^+_i(\cc^{\leq
0},\cc^{\geq 0})$.
\end{proposition}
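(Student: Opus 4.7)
The plan is to recognize $\mu^+_i$ and $\mu^-_i$ as instances of the Happel--Reiten--Smal\o\ tilting construction, which produces a new $t$-structure from an old one together with a torsion pair in its heart.

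First, I will verify that $(\ct,\cf)=({}^\perp\cs_i,\cs_i)$ is a torsion pair in $\ca$. Since $\ca$ is a length category and $S_i$ is simple, the extension closure $\cs_i$ consists of objects all of whose composition factors are isomorphic to $S_i$, and is therefore closed under subobjects, quotients and extensions; in particular $\cs_i$ is a torsion-free class in $\ca$, and the torsion decomposition for $A\in\ca$ is obtained by taking the maximal subobject $T\subseteq A$ with $A/T\in\cs_i$, which exists by finiteness of length. The same reasoning shows that $(\cs_i,\cs_i^\perp)$ is also a torsion pair in $\ca$, now with $\cs_i$ as torsion class.

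Second, I will identify $\mu^+_i(\cc^{\leq 0},\cc^{\geq 0})$ with the Happel--Reiten--Smal\o\ left tilt of $(\cc^{\leq 0},\cc^{\geq 0})$ along $(\ct,\cf)$, and $\mu^-_i(\cc^{\leq 0},\cc^{\geq 0})$ with the right tilt along $(\cs_i,\cs_i^\perp)$. The orthogonality axiom for $\mu^+_i(\cc^{\leq 0},\cc^{\geq 0})$ follows from $\Hom_\ca(\ct,\cf)=0$ together with the vanishing $\Hom(\ca,\Sigma^{m}\ca)=0$ for $m<0$ in $\cc$, and the truncation triangle of an arbitrary $M\in\cc$ is assembled from the original triangle $\tau^{\leq 0}M\to M\to\tau^{>0}M$ and the torsion decomposition $0\to T\to H^0(M)\to F\to 0$ via the octahedral axiom. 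The new heart consists precisely of those $M$ with $H^{-1}(M)\in\cf$, $H^0(M)\in\ct$ and $H^i(M)=0$ otherwise; the triangle $\Sigma H^{-1}(M)\to M\to H^0(M)\to \Sigma^2 H^{-1}(M)$ coming from the original $t$-structure becomes a short exact sequence in the new heart that exhibits $(\Sigma\cf,\ct)$ as the claimed torsion pair. Boundedness is inherited from $(\cc^{\leq 0},\cc^{\geq 0})$ because the new aisle lies between $\Sigma\cc^{\leq 0}$ and $\cc^{\leq 0}$. The argument for $\mu^-_i$ is dual, and the heart of $\mu^-_i(\cc^{\leq 0},\cc^{\geq 0})$ consists of $M$ with $H^0(M)\in\cs_i^\perp$, $H^1(M)\in\cs_i$, with torsion pair $(\cs_i^\perp,\Sigma^{-1}\cs_i)$.

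Third, for the identities $\mu^-_i\circ\mu^+_i=\id=\mu^+_i\circ\mu^-_i$, I will compute directly. The heart $\ca':=\mu^+_i(\ca)$ has the same simple objects as $\ca$ except that $S_i$ is replaced by $\Sigma S_i$; the extension closure of $\Sigma S_i$ inside $\ca'$ is $\Sigma\cs_i=\Sigma\cf$, which is the torsion class of the pair $(\Sigma\cf,\ct)$ on $\ca'$. Consequently applying $\mu^-_i$ to $\mu^+_i(\cc^{\leq 0},\cc^{\geq 0})$ amounts to the right HRS tilt of $\ca'$ along $(\Sigma\cf,\ct)$, and its aisle and co-aisle can be tracked through the new cohomology functors and matched, degree by degree, with those of $(\cc^{\leq 0},\cc^{\geq 0})$. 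The other composition is symmetric. The main obstacle I anticipate is the bookkeeping of shifts and cohomology degrees in this last step; once the identification with the HRS construction is made, the individual verifications are standard.
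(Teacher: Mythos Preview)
Your proposal is correct and follows the same route as the paper: the paper's own proof consists solely of the citation ``This follows from~\cite[Proposition 2.1, Corollary 2.2]{HappelReitenSmaloe96} and~\cite[Proposition 2.5]{Bridgeland05}'', which is precisely the Happel--Reiten--Smal\o\ tilting construction you unpack by hand. Your three steps (verifying the torsion pair, identifying $\mu_i^\pm$ with HRS tilts, and checking that tilting back along the induced torsion pair $(\Sigma\cf,\ct)$ recovers the original $t$-structure) are exactly the content of those references.
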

\begin{proof}
 This follows from~\cite[Proposition 2.1, Corollary 2.2]{HappelReitenSmaloe96} and~\cite[Proposition 2.5]{Bridgeland05}.
\end{proof}

In general the heart of the mutation of a bounded $t$-structure with
length heart is not necessarily a length category. For an example,
let $Q$ be the quiver
\[\xymatrix{1\ar@(ul,dl) & 2\ar[l]}\]
and consider  the bounded derived category $\cc=\cd^b(\nilrep Q)$ of
finite-dimensional nilpotent representations of $Q$. Let $S_1$ and
$S_2$ be the one-dimensional nilpotent representations associated to
the two vertices. Let $\cf=\cs_1$ be the extension closure of $S_1$
and $\ct={}^\perp\cf=\{M\in\nilrep Q\mid \mathrm{top}(M)\in\add(S_2)\}$.
Then the heart $\ca'$ of the left mutation at $1$ of the standard
$t$-structure has a torsion pair $(\Sigma\cf,\ct)$. Due to
$\nilrep Q$ being hereditary, there are no extensions of
$\Sigma\cf$ by $\ct$, and hence any indecomposable object of $\ca'$
belongs to either $\ct$ or $\Sigma \cf$. Suppose that $\ca'$ is a
length category. Then $\ca'$ has two isomorphism classes of simple
modules, which respectively belong to $\ct$ and $\Sigma\cf$, say
$S_2'\in\ct$ and $S_1'\in\Sigma\cf$. For $n\in\mathbb{N}$ define an
indecomposable object $M_n$ in $\ct$ as
\[\xymatrix{k\ar@(ul,dl)_{J_n(0)} && k\ar[ll]_{(0,\ldots,0,1)^{tr}},}\]
where $J_n(0)$ is the (upper triangular) Jordan block of size $n$ and
with eigenvalue $0$. There are no morphisms from $S_1'$ to $M_n$ for any $n$.
Suppose that the Loewy length of $S_2'$ in $\ca$ is $l$.
Then for $n>l$, any morphism from $S_2'$ to $M_n$ factors through
$\mathrm{rad}^{n-l}M_n$ which lies in $\cf$, and hence the morphism has to be
zero. Therefore $M_n$ ($n>l$), considered as an object in $\ca'$, does not have finite length,
a contradiction.

\medskip

For two bounded $t$-structures $(\cc^{\leq 0},\cc^{\geq 0})$ and $(\cc'^{\leq 0},\cc'^{\geq 0})$ on $\cc$,
define $$(\cc^{\leq 0},\cc^{\geq 0})\geq (\cc'^{\leq 0},\cc'^{\geq 0})$$ if $\cc^{\leq 0}\supseteq \cc'^{\leq 0}$.
This defines a partial order on the set of bounded $t$-structures on $\cc$.

\subsection{Co-t-structures}\label{ss:co-t-str-mutation}

Let $(\cc_{\geq 0},\cc_{\leq 0})$ be a bounded co-$t$-structure of
$\cc$. Assume that the co-heart admits a basic additive generator
$M=M_1\oplus\ldots\oplus M_r$ with $M_i$ indecomposable. Then $M$ is
a silting object of $\cc$. Let $i=1,\ldots,r$. Define $\cc'_{\leq
0}$ as the additive closure of the extension closure of $\Sigma ^m M_j$,
$j\neq i$, and $\Sigma^{m+1} M_i$ for $m\geq 0$ and define
$\cc'_{\geq 0}$ as the left perpendicular category of $\Sigma\cc'_{\leq
0}$. The \emph{left mutation} $\mu^+_i(\cc_{\geq 0},\cc_{\leq 0})$
is defined as the pair $(\cc'_{\geq 0},\cc'_{\leq 0})$. Similarly
one defines the \emph{right mutation} $\mu^-_i(\cc_{\geq
0},\cc_{\leq 0})$.

\begin{proposition}\label{p:mutation-co-t-str}
The pairs $\mu^+_i(\cc_{\geq 0},\cc_{\leq 0})$ and
$\mu^-_i(\cc_{\geq 0},\cc_{\leq 0})$ are bounded co-$t$-structures
on $\cc$. Moreover, $\mu^+_i\circ\mu^-_i(\cc_{\geq 0},\cc_{\leq
0})=(\cc_{\geq 0},\cc_{\leq 0})=\mu^-_i\circ\mu^+_i(\cc_{\geq
0},\cc_{\leq 0})$.
\end{proposition}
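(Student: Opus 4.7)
The strategy is to transport the claim to silting objects via the bijection $\phi_{41}$ of Lemma~\ref{l:silting-and-co-t-str} and then invoke Theorem~\ref{t:mutation-silting}. By Lemma~\ref{l:co-heart}, the co-heart $\add(M)$ is a silting subcategory of $\cc$, so $M=M_1\oplus\ldots\oplus M_r$ is a basic silting object. Combining Proposition~\ref{p:AI's-from-silting-to-co-t-str} with Lemma~\ref{l:silting-and-co-t-str} (valid here, see Section~\ref{ss:generality-of-the-maps}) shows that $(\cc_{\geq 0},\cc_{\leq 0})=\phi_{41}(M)$; in particular, $\cc_{\leq 0}$ is the additive closure of the extension closure of $\{\Sigma^m M\mid m\geq 0\}$ in $\cc$.

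Let $M'=\mu^+_i(M)=M'_i\oplus\bigoplus_{j\neq i}M_j$ be the silting left mutation, defined by the triangle $M_i\to E\to M'_i\to\Sigma M_i$ whose first map is the minimal left $\add(\bigoplus_{j\neq i}M_j)$-approximation of $M_i$. By Theorem~\ref{t:mutation-silting}, $M'$ is again a basic silting object, so $(\tilde\cc_{\geq 0},\tilde\cc_{\leq 0})=\phi_{41}(M')$ is a bounded co-$t$-structure on $\cc$. The task is then to check the equality
\[
(\tilde\cc_{\geq 0},\tilde\cc_{\leq 0})=\mu^+_i(\cc_{\geq 0},\cc_{\leq 0}),
\]
with the right-hand side given by the definition in Section~\ref{ss:co-t-str-mutation}.

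It suffices to prove $\tilde\cc_{\leq 0}=\cc'_{\leq 0}$: the co-aisles then agree automatically, since in any co-$t$-structure one has $\cc_{\geq 0}={}^\perp(\Sigma\cc_{\leq 0})$ (a short argument using the defining triangle together with closure of $\cc_{\geq 0}$ under direct summands), while $\cc'_{\geq 0}$ was defined exactly as ${}^\perp(\Sigma\cc'_{\leq 0})$. To identify the aisles, shift the triangle of the preceding paragraph by $\Sigma^m$ for $m\geq 0$ and rotate to
\[
\Sigma^m E\to\Sigma^m M'_i\to\Sigma^{m+1}M_i\to\Sigma^{m+1}E.
\]
Reading it one way, $\Sigma^m M'_i$ is an extension of $\Sigma^{m+1}M_i$ by $\Sigma^m E\in\add(\Sigma^m M_j:j\neq i)$, so every generator of $\tilde\cc_{\leq 0}$ lies in $\cc'_{\leq 0}$; rotating once more, $\Sigma^{m+1}M_i$ becomes an extension of $\Sigma^{m+1}E$ by $\Sigma^m M'_i$, so every generator of $\cc'_{\leq 0}$ lies in $\tilde\cc_{\leq 0}$. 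Since $\Sigma^m M_j=\Sigma^m M'_j$ for $j\neq i$ sits trivially in both closures, the two additive-extension closures coincide.

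The case of $\mu^-_i$ is completely symmetric, using the right silting mutation. Finally, the identities $\mu^+_i\circ\mu^-_i=\id=\mu^-_i\circ\mu^+_i$ for bounded co-$t$-structures are obtained by transporting the corresponding silting identities of Theorem~\ref{t:mutation-silting} along the bijection $\phi_{41}$. The only real calculation is the triangle rotation in the third paragraph, which, while forming the main technical step, is nevertheless routine.
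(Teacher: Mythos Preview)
Your proposal is correct and follows essentially the same route as the paper: transport the problem to silting objects via $\phi_{41}$, then invoke Theorem~\ref{t:mutation-silting}. The paper merely says it is ``straightforward to check, using the defining triangle for $\mu_i^+(M)$,'' that $\mu_i^+(\cc_{\geq 0},\cc_{\leq 0})=\phi_{41}(\mu_i^+(M))$; your triangle-rotation argument and the observation that $\cc_{\geq 0}={}^\perp(\Sigma\cc_{\leq 0})$ in any co-$t$-structure are exactly the details the paper omits.
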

\begin{proof} This can be proved directly. Here we alternatively make use of
the results in Sections~\ref{ss:silting-obj} and~\ref{ss:silting-obj-mutation}.
Recall from Theorem~\ref{t:mutation-silting} that there is a mutated
silting object $\mu_i^+(M)$. It is straightforward to check, using the defining
triangle for $\mu_i^+(M)$, that $\mu_i^+(\cc_{\geq 0},\cc_{\leq 0})$
is the bounded co-$t$-structure associated to $\mu_i^+(M)$ as
defined in Proposition~\ref{p:AI's-from-silting-to-co-t-str}, and
similarly for $\mu_i^-$. The second statement follows
from Theorem~\ref{t:mutation-silting}.
\end{proof}

For two bounded co-$t$-structures $(\cc_{\geq 0},\cc_{\leq 0})$ and $(\cc'_{\geq 0},\cc'_{\leq 0})$ on $\cc$, define
$$(\cc_{\geq 0},\cc_{\leq 0})\geq(\cc'_{\geq 0},\cc'_{\leq 0})$$ if $\cc_{\leq 0}\supseteq\cc'_{\leq 0}$.
This defines a partial order on the set of bounded co-$t$-structures on $\cc$.

\subsection{The bijections commute with mutations}\label{ss:commuting-with-mutations}

Let $\Lambda$ a
finite-dimensional algebra over $K$.

\begin{theorem}\label{t:main-theorem-commuting-with-mutations}
\label{maintheoremmutations}
The $\phi_{ij}$'s defined in Section~\ref{s:the-maps} commute with
the left and right mutations defined in previous subsections.
\end{theorem}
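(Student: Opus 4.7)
The plan is to reduce, via the transitivity of Lemma \ref{l:transitivity}, to three bijections: since $\phi_{ij} = \phi_{ik}\circ\phi_{kj}$ whenever defined, it suffices to establish commutation with mutation for three of the $\phi$'s. I will take the three bijections emanating from silting objects, namely $\phi_{41}$, $\phi_{21}$ and $\phi_{31}$, and treat only the left mutation $\mu^+_i$; the case of $\mu^-_i$ follows, since $\mu^-_i$ is the inverse of $\mu^+_i$ on each of the four sides (Theorem \ref{t:mutation-silting}, Propositions \ref{p:mutation-simple-minded}(a), \ref{p:mutation-t-str}, \ref{p:mutation-co-t-str}).

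For $\phi_{41}$, commutation is essentially built into the definitions: the proof of Proposition \ref{p:mutation-co-t-str} identifies $\mu^+_i(\cc_{\geq 0}, \cc_{\leq 0})$ with the co-$t$-structure attached via Proposition \ref{p:AI's-from-silting-to-co-t-str} to the silting mutation $\mu^+_i(M)$, where $M = \phi_{14}(\cc_{\geq 0}, \cc_{\leq 0})$. So $\phi_{41}(\mu^+_i(M)) = \mu^+_i(\phi_{41}(M))$ is immediate.

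The main content is the commutation of $\phi_{21}$ with $\mu^+_i$. Fix a basic silting $M = M_1 \oplus \cdots \oplus M_r$ with $\phi_{21}(M) = \{X_1, \ldots, X_r\}$, so that $\Hom(M_k, \Sigma^m X_l) = R_l$ for $k = l$, $m = 0$ and is zero otherwise, by Lemma \ref{l:from-silting}(a). Let $M_i \xrightarrow{f} E \to M'_i \to \Sigma M_i$ define the silting mutation $\mu^+_i(M)$, and let $\Sigma^{-1}X_l \xrightarrow{g_l} X_{il} \to X'_l \to X_l$ (for $l \neq i$), together with $X'_i = \Sigma X_i$, define the simple-minded mutation. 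By Lemma \ref{l:hom-duality}, it suffices to verify the Hom-duality
\[
\Hom(M'_k, \Sigma^m X'_l) = R_l \text{ if } k = l \text{ and } m = 0, \text{ and } 0 \text{ otherwise},
\]
where $M'_k = M_k$ for $k \neq i$. I would split into four cases according as $k, l$ equal $i$ or not, applying the long exact $\Hom$-sequences to the two defining triangles, and using two elementary vanishings: $\Hom(M_k, \Sigma^\bullet X_i) = 0$ for $k \neq i$, which extends to $\Hom(M_k, \Sigma^\bullet X_{il}) = 0$ since $X_{il} \in \cx_i$; and $\Hom(M_i, \Sigma^m X_{il}) = 0$ for $m \neq 0$, proved by induction on the length of $X_{il}$ as an iterated extension of $X_i$. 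With the commutation of $\phi_{21}$ in hand, the commutation of $\phi_{31}$ with $\mu^+_i$ follows by Proposition \ref{p:mutation-t-str}: the simples of the tilted heart are $\mu^+_i(\phi_{21}(M)) = \phi_{21}(\mu^+_i(M))$, and $\phi_{32}$ then recovers the mutated $t$-structure.

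The delicate case is $k = i$, $l \neq i$, where neither triangle alone yields the vanishing $\Hom(M'_i, \Sigma^m X'_l) = 0$. One is reduced to proving injectivity of the map $f^* \colon \Hom(E, X'_l) \to \Hom(M_i, X'_l)$ induced by $f$, and this is the point where the two minimalities -- of the $\add(\bigoplus_{j \neq i}M_j)$-approximation $f$ and of the $\cx_i$-approximation $g_l$ -- must be coupled. Concretely, both the multiplicity of $M_l$ in $E$ and the multiplicity of $X_i$ in $X_{il}$ are controlled by $\dim_K \Ext^1_{\ca}(X_i, X_l) \cong \dim_K \Hom(M_i, \Sigma X_l)$, where $\ca$ is the heart of $\phi_{31}(M)$, and matching these dimensions will force $f^*$ to be bijective onto $\Hom(M_i, X'_l)$, yielding the required vanishing of its kernel $\Hom(M'_i, X'_l)$.
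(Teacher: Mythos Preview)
Your reduction and the case $\phi_{41}$ match the paper. But the paper does \emph{not} attack $\phi_{21}$ directly; it proves $\phi_{31}$ (via the dg algebra $\tilde\Gamma$) and $\phi_{23}$ (via a short heart-containment argument), and then everything else follows by transitivity. Your route through $\phi_{21}$ is genuinely different, and it has two gaps.

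First, your deduction of $\phi_{31}$ from $\phi_{21}$ is circular. The claim ``the simples of the tilted heart are $\mu^+_i(\phi_{21}(M))$'' is precisely the statement that $\phi_{23}$ commutes with $\mu^+_i$; Proposition~\ref{p:mutation-t-str} only gives the torsion pair $(\Sigma\cf,\ct)$ in the tilted heart, not its simples. You still need to supply the argument that the $X'_j$ lie in the tilted heart and hence generate the same aisle and co-aisle --- this is short (it is the paper's part (c)), but it is not automatic.

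Second, and more seriously, the delicate case $k=i$, $l\neq i$ is not handled. The quantity you name as the common control, $\Hom(M_i,\Sigma X_l)$, vanishes identically by the very Hom-duality in Lemma~\ref{l:from-silting}(a), so it cannot measure anything. The relevant quantity on the simple-minded side is $\Hom(\Sigma^{-1}X_l,X_i)\cong\Ext^1_{\ca}(X_l,X_i)$, with the order of arguments reversed; on the silting side, the multiplicity of $M_l$ in $E$ is governed by $\Hom(M_i,M_l)$ modulo the radical of $\End(M)$, and relating these through the non-semisimple algebra $\Gamma=\End(M)$ is not a one-line dimension count. Even granting equal dimensions, you would still owe a proof of injectivity or surjectivity of $f^*$: the observation that $\Hom(E,X'_l)\cong\Hom(E,X_l)$ and $\Hom(M_i,X'_l)\cong\Hom(M_i,X_{il})$ identifies the source and target, but the map $f^*$ between them has no evident reason to be injective or surjective from the approximation property of $f$ alone, since neither $X'_l$ nor $X_{il}$ lies in $\add(\bigoplus_{j\neq i}M_j)$. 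The paper sidesteps this entirely by working with $\phi_{31}$: passing through $\cd_{fd}(\tilde\Gamma)$, one reduces to showing $\ct\subseteq\cc''^{\leq 0}$, which amounts to surjectivity of a map $\Hom(E,T)\to\Hom(e_i\tilde\Gamma,T)$ for $T\in\ct$, and this \emph{does} follow from the approximation property after reducing to $H^0$.
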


A priori it it not known that the heart of the mutation
of a bounded $t$-structure with length heart is again a length
category. So the theorem becomes well-stated only when the proof
has been finished.

\begin{proof}

In view of Lemma~\ref{l:transitivity},
Theorem~\ref{t:mutation-silting}, and
Propositions~\ref{p:mutation-simple-minded}, ~\ref{p:mutation-t-str}
and~\ref{p:mutation-co-t-str}, it suffices to prove that
$\phi_{41}$, $\phi_{31}$ and $\phi_{23}$ commute with the
corresponding left mutations.

(a) $\phi_{41}$ commutes with $\mu^+_i$: this was already shown in
the proof of Proposition~\ref{p:mutation-co-t-str}.

(b) $\phi_{31}$ commutes with $\mu^+_i$: Let
$M=M_1\oplus\ldots\oplus M_r$ be a silting object with $M_i$
indecomposable and $(\cc^{\leq 0},\cc^{\geq 0})=\phi_{31}(M)$. We want to show $\mu_i^+(\cc^{\leq 0},\cc^{\geq 0})=\phi_{31}(\mu_i^+(M))$. 

Let $\tilde{\Gamma}$ be the truncated dg endomorphism algebra of $M$ as in Section~\ref{ss:silting-der-equiv}. Then there is a triangle equivalence $F=?\lten_{\tilde{\Gamma}}M:\cd_{fd}(\tilde{\Gamma})\rightarrow\cd^b(\mod \Lambda)$, which takes $\tilde{\Gamma}$ to $M$ and takes the standard $t$-structure $(\cd^{\leq 0},\cd^{\geq 0})$ on $\cd_{fd}(\tilde{\Gamma})$ to $(\cc^{\leq 0},\cc^{\geq 0})$. There is a decomposition $1=e_1+\ldots+e_r$, where $e_1,\ldots,e_r$ are (not necessarily primitive) idempotents of $\tilde{\Gamma}$ such that $F$ takes $e_j\tilde{\Gamma}$ to $M_j$ for $1\leq j\leq r$. 

Let $\Gamma=H^0(\tilde{\Gamma})$ and $\pi:\tilde{\Gamma}\rightarrow\Gamma$ be the canonical projection. By abuse of notation, write $e_1=\pi(e_1),\ldots,e_r=\pi(e_r)$. Then $e_1\Gamma,\ldots,e_r\Gamma$ are indecomposable projective $\Gamma$-modules. Let $S_1,\ldots,S_r$ be the corresponding simple modules. Recall that the heart of the $t$-structure $(\cd^{\leq 0},\cd^{\geq 0})$ is $\mod\Gamma$.
Let $\cf=\add(S_i)\subseteq \mod\Gamma$ and $\ct={}^\perp S_i$. Define $\cd'^{\leq 0}$ (respectively, $\cd'^{\geq 0}$) to be the extension closure of $\Sigma\cd^{\leq 0}$ and $\ct$ (respectively, of $\Sigma\cf$ and $\cd^{\geq 0}$). Then $F(\cd'^{\leq 0},\cd'^{\geq 0})=\mu_i^+(\cc^{\leq 0},\cc^{\geq 0})$.

The left mutation of $\tilde{\Gamma}$ at $e_i\tilde{\Gamma}$ is  $\mu_i^+(\tilde{\Gamma})=Q_i\oplus\bigoplus_{j\neq i}e_j\tilde{\Gamma}$, where $Q_i$ is defined by the triangle
\begin{eqnarray}\label{eq:triangle-commutativity}
\xymatrix{e_i\tilde{\Gamma}\ar[r]^f & E\ar[r] & P_i'\ar[r] & \Sigma e_i\tilde{\Gamma}},
\end{eqnarray}
where $f$ is a minimal left $\add(\bigoplus_{j\neq i}e_j\tilde{\Gamma})$-approximation. Then $F(\mu_i^+(\tilde{\Gamma}))=\mu_i^+(M)$. 
Define 
\begin{eqnarray*}
\cd''^{\leq 0} &=& \{N\in \cd_{fd}(\tilde{\Gamma})~|~\Hom(\mu_i^+(\tilde{\Gamma}),\Sigma^m N)=0, \forall m>0\},\\
\cd''^{\geq 0} &=& \{N\in \cd_{fd}(\tilde{\Gamma})~|~\Hom(\mu_i^+(\tilde{\Gamma}),\Sigma^m N)=0, \forall m<0\}.
\end{eqnarray*}

Thus  showing $\mu_i^+(\cc^{\leq 0},\cc^{\geq 0})=\phi_{31}(\mu_i^+(M))$ is equivalent to showing the equality $(\cd'^{\leq 0},\cd'^{\geq 0})=(\cd''^{\leq 0},\cd''^{\geq 0})$, equivalently, the inclusions $\cd'^{\leq 0}\subseteq \cd''^{\leq 0}$ and $\cd'^{\geq 0}\subseteq \cd''^{\geq 0}$. It suffices to prove $\ct\subseteq \cd''^{\leq 0}$, $\Sigma\cd^{\leq 0}\subseteq \cd''^{\leq 0}$, $\Sigma\cf\subseteq\cd''^{\geq 0}$ and $\cd^{\geq 0}\subseteq\cd''^{\geq 0}$. We only show the first inclusion, the other three are easy.

Let $T\in\ct$. To show $T\in \cd''^{\leq 0}$, it suffices to show $\Hom(Q_i,\Sigma T)=0$. Applying $\Hom(?,T)$ to the triangle (\ref{eq:triangle-commutativity}), we obtain a long exact sequence
\[
\xymatrix{
\Hom(E, T)\ar[r]^{f^*} & \Hom(e_i\tilde{\Gamma},\Sigma T)\ar[r] &\Hom(Q_i,\Sigma T)\ar[r] &\Hom(E,\Sigma T)=0
}.
\]
We claim that $f^*$ is surjective. Then the desired result follows. Consider the commutative diagram
\begin{eqnarray}\label{d:commutativity}
\begin{split}
\xymatrix@C=3.5pc{
\Hom(e_i\Gamma,T)\ar[r]^{\pi_i^*} & \Hom(e_i\tilde{\Gamma},T)\\
\Hom(H^0(E),T)\ar[r]^{\pi_E^*}\ar[u]_{H^0(f)^*} & \Hom(E,T),\ar[u]_{f^*}
}
\end{split}
\end{eqnarray}
where $\pi_i:e_i\tilde{\Gamma}\rightarrow e_i\Gamma$ and $\pi_E:E\rightarrow H^0(E)$ are the canonical projections. Let $C=\ker(\pi_i)$. Then  there is a triangle
\[
\xymatrix{
C\ar[r] & e_i\tilde{\Gamma}\ar[r]^{\pi_i} & e_i\Gamma \ar[r] &\Sigma C
}.
\]
Note that $C$ belongs to $\Sigma\cd^{\leq 0}$, which implies that $\Hom(C,T)=0=\Hom(\Sigma C,T)$. 
It follows that the map $\pi_i^*$ is bijective. Similarly, the map $\pi_E^*$ is also bijective. Thus it suffices to show the surjectivity of $H^0(f)^*$. Now let $P_T$ be a projective cover of $T$ in $\mod \Gamma$. Then $P_T$ belongs to $\add(\bigoplus_{j\neq i}e_j\Gamma)$ because $T\in\ct={}^\perp S_i$. It follows that any morphism $e_i\Gamma\rightarrow T$ factors through $P_T$, and hence factors through $H^0(f):e_i\Gamma\rightarrow H^0(E)$, since $H^0(f)$ is a minimal left  $\add(\bigoplus_{j\neq i}e_j\Gamma)$-approximation (for $H^0|_{\add(\tilde{\Gamma})}:\add(\tilde{\Gamma})\rightarrow \add(\Gamma)$ is an equivalence). This shows that $H^0(f)^*$ is surjective, completing the proof of the claim.




(c) $\phi_{23}$ commutes with $\mu^+_i$: Let $(\cc^{\leq
0},\cc^{\geq 0})$ be a bounded $t$-structure on $\cd^b(\mod\Lambda)$
with length heart. Let $\{X_1,\ldots,X_r\}=\phi_{23}(\cc^{\leq
0},\cc^{\geq 0})$. The mutated simple-minded
collection $\mu^+_i(X_1,\ldots,X_r)$ is contained in the heart of the mutated
$t$-structure $\mu^+_i(\cc^{\leq
0},\cc^{\geq 0})$. Consequently, the aisle and co-aisle of $\phi_{32}\circ\mu^+_i(X_1,\ldots,X_r)$
are respectively contained in the aisle and co-aisle of $\mu^+_i(\cc^{\leq
0},\cc^{\geq 0})$, and hence $\phi_{32}\circ\mu^+_i(X_1,\ldots,X_r)=\mu^+_i(\cc^{\leq
0},\cc^{\geq 0})$, \ie $\phi_{23}\circ\mu^+_i(\cc^{\leq
0},\cc^{\geq 0})=\mu^+_i\circ\phi_{23}(\cc^{\leq
0},\cc^{\geq 0})$.
\end{proof}


\subsection{The bijections are isomorphisms of partially ordered sets}

Let $\Lambda$ be a finite-dimensional algebra over $K$.
\begin{theorem}\label{t:main-thm-partially-ordered-sets}
 The $\phi_{ij}$'s defined in Section~\ref{section-maps} are isomorphisms of partially ordered sets with respect to
 the partial orders defined in previous subsections.
\end{theorem}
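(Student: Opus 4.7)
The plan is to reduce the theorem to checking order-preservation for three of the twelve bijections, specifically $\phi_{23}$, $\phi_{14}$, and $\phi_{31}$, and then propagate this to all $\phi_{ij}$ via the composition relation $\phi_{ij}\circ\phi_{jk}=\phi_{ik}$ of Lemma~\ref{l:transitivity}. It suffices to show that each of the three is a poset isomorphism (monotone in both directions), since compositions and inverses of poset isomorphisms are again such, and these three already connect all four of the sets in (1)--(4).

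For $\phi_{23}$ there is nothing new to prove: the biconditional $\{X_1,\ldots,X_r\}\geq\{X'_1,\ldots,X'_r\}\Leftrightarrow\cc'^{\leq 0}\subseteq\cc^{\leq 0}$ is already spelled out in the chain of equivalences at the beginning of the proof of Proposition~\ref{p:partial-order-simple-minded}, and the right-hand side is exactly the $t$-structure order on $\phi_{32}(\{X_i\})$ and $\phi_{32}(\{X'_i\})$.

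For $\phi_{14}$, write $M=\phi_{14}(\cc_{\geq 0},\cc_{\leq 0})$ and $M'=\phi_{14}(\cc'_{\geq 0},\cc'_{\leq 0})$ for the additive generators of the two co-hearts. The direction $\cc_{\leq 0}\supseteq\cc'_{\leq 0}\Rightarrow M\geq M'$ is routine: $M'\in\cc_{\leq 0}$, the $\Sigma$-stability of $\cc_{\leq 0}$ puts $\Sigma^{m-1}M'$ in $\cc_{\leq 0}$ for $m\geq 1$, and the Hom-vanishing axiom applied to $M\in\cc_{\geq 0}$ then yields $\Hom(M,\Sigma^m M')=0$. For the converse I will show $M\geq M'$ implies $M'\in\cc_{\leq 0}^M$, which is enough because $\cc_{\leq 0}^{M'}$ is the extension closure of $\Sigma^{\geq 0}M'$. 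The main tool is the orthogonality identity $\cc_{\leq 0}^M=(\Sigma^{-1}\cc_{\geq 0}^M)^{\perp}$, valid for any co-$t$-structure since the defining triangle $A\to X\to B\to\Sigma A$ splits as soon as the middle map vanishes (\confer~\cite[Proposition 1.3.3.6]{Bondarko10}). It then suffices to check $\Hom(N,\Sigma M')=0$ for every $N\in\cc_{\geq 0}^M$, and since $\cc_{\geq 0}^M$ is the extension closure of $\{\Sigma^{-j}M\mid j\geq 0\}$, long exact Hom-sequences reduce this to the generators, where $\Hom(\Sigma^{-j}M,\Sigma M')=\Hom(M,\Sigma^{j+1}M')=0$ by the hypothesis $M\geq M'$.

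For $\phi_{31}$, one direction is immediate: from $\cd^{\leq 0,M}\supseteq\cd^{\leq 0,M'}$ together with the observation that the silting object $M'$ lies in its own aisle $\cd^{\leq 0,M'}$, one gets $M'\in\cd^{\leq 0,M}$, that is, $M\geq M'$. For the converse, Lemma~\ref{l:transitivity} gives $\phi_{31}=\phi_{34}\circ\phi_{41}$, and unwinding definitions yields $\cd^{\leq 0,M}=(\Sigma^{-1}\cc_{\geq 0}^M)^{\perp}$. Given $M\geq M'$, the monotonicity of $\phi_{41}$ established in the previous step supplies $\cc_{\leq 0}^M\supseteq\cc_{\leq 0}^{M'}$; the dual orthogonality $\cc_{\geq 0}={}^{\perp}(\Sigma\cc_{\leq 0})$ (proved by the same splitting argument) flips this to $\cc_{\geq 0}^M\subseteq\cc_{\geq 0}^{M'}$, and one more left-perpendicular produces $\cd^{\leq 0,M}\supseteq\cd^{\leq 0,M'}$. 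All remaining bijections are poset isomorphisms by composition. The main obstacle throughout is the converse direction of the $\phi_{14}$ step, which depends on the orthogonality characterization of the co-aisle; everything else is essentially formal bookkeeping.
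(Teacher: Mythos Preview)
Your proof is correct and follows essentially the same strategy as the paper's: reduce via Lemma~\ref{l:transitivity} to three bijections, invoke the chain of equivalences in the proof of Proposition~\ref{p:partial-order-simple-minded} for the simple-minded/$t$-structure correspondence, and use perpendicularity for the rest. The differences are minor. The paper chooses $\phi_{41},\phi_{32},\phi_{34}$ as its three maps; you choose the inverses/composites $\phi_{14},\phi_{23},\phi_{31}$. For the silting/co-$t$-structure step the paper simply cites \cite[Proposition~2.14]{AiharaIyama12}, whereas you supply a self-contained argument via the orthogonality identity $\cc_{\leq 0}=(\Sigma^{-1}\cc_{\geq 0})^{\perp}$ --- this buys independence from the external reference at the cost of a few extra lines. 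For the third map the paper works directly with $\phi_{34}$ and argues that $\cc^{\leq 0}\supseteq\cc'^{\leq 0}$ is equivalent to the corresponding containment of co-$t$-structures; your $\phi_{31}$ argument factors through $\phi_{41}$ and then essentially reproduces that same $\phi_{34}$ reasoning, so it is slightly more roundabout but arrives at the same place.
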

\begin{proof} In view of Theorem~\ref{t:main-thm-bijections} and Lemma~\ref{l:transitivity},
it suffices to show that $f(x)\geq f(y)$ if and only if $x\geq y$ for
$f=\phi_{41}, \phi_{32}$ and $\phi_{34}$.

(a) For $\phi_{41}$ the desired result follows from~\cite[Proposition {2.14}]{AiharaIyama12}.

(b) For $\phi_{32}$ the desired result is included in the proof of Proposition~\ref{p:partial-order-simple-minded}.

(c) Let $(\cc_{\geq 0},\cc_{\leq 0})$ and $(\cc'_{\geq 0},\cc'_{\leq 0})$
be two bounded co-$t$-structures
on $\cc$ and let $(\cc^{\leq 0},\cc^{\geq 0})$ and $(\cc'^{\leq 0},\cc'^{\geq 0})$ be
their respective images under $\phi_{34}$.
Then by definition
\begin{eqnarray*}
 \cc^{\leq 0}&=&\{M\in\cd^b(\mod \Lambda)\mid\Hom(N,M)=0 \; \forall N\in\Sigma^{-1}\cc_{\geq 0}\},\\
 \cc'^{\leq 0}&=&\{M\in\cd^b(\mod \Lambda)\mid\Hom(N,M)=0 \; \forall N\in\Sigma^{-1}\cc'_{\geq 0}\}.
\end{eqnarray*}
Here, $\cc^{\leq 0}\supseteq\cc'^{\leq 0}$ if and only if $\cc_{\geq 0}\supseteq \cc'_{\geq 0}$, and hence by definition
$(\cc^{\leq 0},\cc^{\geq 0})\geq (\cc'^{\leq 0},\cc'^{\geq 0})$ if and only if
$(\cc_{\geq 0},\cc_{\leq 0})\geq(\cc'_{\geq 0},\cc'_{\leq 0})$.
\end{proof}

\section{A concrete example} \label{section-example}

Let $\Lambda$ be the finite-dimensional $K$-algebra given by the quiver
$$\xymatrix{1\ar@<.7ex>[r]^{\alpha}&2\ar@<.7ex>[l]^{\beta}}$$
with relation $\alpha\beta=0$. This algebra has many manifestations:
It is, possibly up to Morita equivalence, the Auslander algebra of
$k[x]/x^2$, the Schur algebra $S(2,2)$ ($\mathrm{char}K=2$) and the
principal block of the category $\co$ for
$\mathfrak{sl}_2(\mathbb{C})$ ($K=\mathbb{C}$). In this section we
will compute the derived Picard group for $\Lambda$ and classify all
silting objects/simple-minded collections in $\cd^b(\mod\Lambda)$.
As a consequence of this classification and a result of
Woolf~\cite[Theorem 3.1]{Woolf10}, the space of stability conditions
on $\cd^b(\mod\Lambda)$ is exactly $\mathbb{C}^2$.

\subsection{Indecomposable objects}
Let $P_1$ and $P_2$ be the indecomposable projective $\Lambda$-modules corresponding to the vertices $1$ and $2$.
Then up to isomorphism and up to shift an indecomposable object in $\cd^b(\mod\Lambda)$ belongs to one of the following four families
(see for example~\cite{BurbanDrozd06,BekkertMerklen03})
\begin{itemize}
\item[$\cdot$] $P_1(n)=P_1\rightarrow P_1\rightarrow\ldots\rightarrow P_1\rightarrow
P_1$, $n\geq 1$,
\item[$\cdot$] $R(n)=P_1\rightarrow P_1\rightarrow\ldots\rightarrow P_1\rightarrow P_1\rightarrow
P_2$, $n\geq 0$,
\item[$\cdot$] $L(n)=P_2\rightarrow P_1\rightarrow P_1\rightarrow\ldots\rightarrow P_1\rightarrow
P_1$, $n\geq 0$,
\item[$\cdot$] $B(n)=P_2\rightarrow P_1\rightarrow P_1\rightarrow\ldots\rightarrow P_1\rightarrow P_1\rightarrow
P_2$, $n\geq 1$,
\end{itemize}
where the homomorphisms are the unique non-isomorphisms, $n$ is the
number of occurrences of $P_1$ and the rightmost components have been put
in degree $0$.

\subsection{The Auslander--Reiten quiver}

The Auslander--Reiten quiver of $\cd^b(\mod\Lambda)$ consists of
three components: two $\mathbb{Z}A_\infty$ components and one
$\mathbb{Z}A^\infty_\infty$ component
(see~\cite{BobinskiGeissSkowronski04,KalckYang11})
\vspace{-20pt}\[
\begin{picture}(400,160)
\put(-20,120){{\scriptsize
\begin{xy} 0;<0.35pt,0pt>:<0pt,-0.35pt>::
(0,150) *+{} ="0", (0,50) *+{} ="1", (50,200) *+{\circ} ="2",
(50,100)
*+{} ="3", (50,0) *+{} ="4", (100,150) *+{\circ} ="5", (100,50) *+{}
="6", (150,200) *+{\Sigma^{-1}P_1} ="7", (150,100) *+{\circ} ="8",
(150,0)
*+{} ="9", (200,150) *+{\circ} ="10", (200,50)
*+{\circ} ="11", (250,200) *+{P_1} ="12", (250,100) *+{\circ} ="13",
(250,0) *+{} ="14", (300,150) *+{\circ} ="15", (300,50) *+{} ="16",
(350,200) *+{\Sigma P_1} ="17", (350,100) *+{} ="18", (400,150)
*+{} ="20",  "0", {\ar@{.}"2"}, "1",
{\ar@{}"3"}, "1", {\ar@{}"4"}, "2", {\ar"5"}, "3", {\ar@{.}"5"},
"4", {\ar@{}"6"}, "5", {\ar"7"}, "5", {\ar"8"}, "6", {\ar@{.}"8"},
"7", {\ar"10"}, "8", {\ar"10"}, "8", {\ar"11"}, "9", {\ar@{.}"11"},
"10", {\ar"12"}, "10", {\ar"13"}, "11", {\ar"13"}, "11",
{\ar@{.}"14"}, "12", {\ar"15"}, "13", {\ar"15"}, "13",
{\ar@{.}"16"}, "15", {\ar"17"}, "15", {\ar@{.}"18"}, "17",
{\ar@{.}"20"},
\end{xy}
} } \put(120,145){ {\scriptsize
\begin{xy} 0;<0.35pt,0pt>:<0pt,-0.35pt>::
(0,250) *+{}="100", (0,150) *+{}="101", (50,300) *+{}="102",
(50,200)*+{\circ}="103", (50,100) *+{}="104", (100,350)
*+{}="105",(100,250) *+{\circ}="106",(100,150) *+{I_2}="107",
(100,50) *+{}="108", (150,400)*+{}="109", (150,300)*+{\circ}="110",
(150,200)*+{S_1}="111", (150,100)*+{\Sigma S_1}="112",
(150,0)*+{}="113", (200,350)*+{\circ}="114", (200,250)*+{P_2}="115",
(200,150)*+{\Sigma P_2}="116",
(200,50)*+{\circ}="117",(250,400)*+{}="118",
(250,300)*+{\circ}="119", (250,200)*+{\circ}="120",
(250,100)*+{\circ}="121", (250,0)*+{}="122", (300,350)
*+{}="123",(300,250) *+{\circ}="124",(300,150) *+{\circ}="125",
(300,50) *+{}="126", (350,300) *+{}="127", (350,200)*+{\circ}="128",
(350,100)
*+{}="129", (400,250) *+{}="130", (400,150) *+{}="131",
"100", {\ar@{.}"103"}, "101", {\ar@{.}"103"}, "102", {\ar@{.}"106"},
"103", {\ar"106"}, "103", {\ar"107"}, "104", {\ar@{.}"107"}, "105",
{\ar@{.}"110"},"106",{\ar"110"},"106", {\ar"111"},"107",
{\ar"111"},"107", {\ar"112"},"108", {\ar@{.}"112"},"109",
{\ar@{.}"114"},"110", {\ar"114"}, "110", {\ar"115"},"111",
{\ar"115"},"111", {\ar"116"},"112", {\ar"116"},"112",
{\ar"117"},"113", {\ar@{.}"117"},"114", {\ar@{.}"118"},"114",
{\ar"119"},"115", {\ar"119"}, "115", {\ar"120"}, "116",
{\ar"120"},"116", {\ar"121"},"117", {\ar"121"},"117",
{\ar@{.}"122"},"119", {\ar@{.}"123"},"119", {\ar"124"}, "120",
{\ar"124"},"120", {\ar"125"},"121", {\ar"125"},"121",
{\ar@{.}"126"}, "124", {\ar@{.}"127"},"124", {\ar"128"},"125",
{\ar"128"},"125", {\ar@{.}"129"},"128", {\ar@{.}"130"}, "128",
{\ar@{.}"131"},
\end{xy} }
} \put(260,120){ {\scriptsize
\begin{xy} 0;<0.35pt,0pt>:<0pt,-0.35pt>::
(0,150) *+{} ="0", (0,50) *+{} ="1", (50,200) *+{\circ} ="2",
(50,100)
*+{} ="3", (50,0) *+{} ="4", (100,150) *+{\circ} ="5", (100,50) *+{}
="6", (150,200) *+{\Sigma S_2} ="7", (150,100) *+{\circ} ="8",
(150,0)
*+{} ="9", (200,150) *+{\circ} ="10", (200,50)
*+{\circ} ="11", (250,200) *+{S_2} ="12", (250,100) *+{\circ} ="13",
(250,0) *+{} ="14", (300,150) *+{\circ} ="15", (300,50) *+{} ="16",
(350,200) *+{\Sigma^{-1}S_2} ="17", (350,100) *+{} ="18", (400,150)
*+{} ="20",  "0", {\ar@{.}"2"}, "1",
{\ar@{}"3"}, "1", {\ar@{}"4"}, "2", {\ar"5"}, "3", {\ar@{.}"5"},
"4", {\ar@{}"6"}, "5", {\ar"7"}, "5", {\ar"8"}, "6", {\ar@{.}"8"},
"7", {\ar"10"}, "8", {\ar"10"}, "8", {\ar"11"}, "9", {\ar@{.}"11"},
"10", {\ar"12"}, "10", {\ar"13"}, "11", {\ar"13"}, "11",
{\ar@{.}"14"}, "12", {\ar"15"}, "13", {\ar"15"}, "13",
{\ar@{.}"16"}, "15", {\ar"17"}, "15", {\ar@{.}"18"}, "17",
{\ar@{.}"20"},
\end{xy}
} } \end{picture}\]

The abelian category $\mod\Lambda$ has five indecomposable objects
up to isomorphism: the two simple modules $S_1$ and $S_2$, their
projective covers $P_1$ and $P_2$ and their injective envelopes
$I_1=P_1$ and $I_2$. They are marked on the above Auslander--Reiten
quiver.

The left $\mathbb{Z}A_\infty$ component consists of shifts of
$P_1(n)$, $n\geq 1$. The Auslander--Reiten translation $\tau$ takes
$P_1(n)$ to $\Sigma^{-1}P_1(n)$. It is straightforward to check that
$P_1$ is a $0$-spherical object of $\cd^b(\mod\Lambda)$ in the sense
of Seidel and Thomas~\cite{SeidelThomas01}. The additive closure of
this component is the triangulated subcategory generated by $P_1$.
This component will be referred to as the $0$-spherical component.

The $\mathbb{Z}A^\infty_\infty$ component consists of shifts of
$R(n)$ and $L(n)$, $n\geq 0$. Note that $S_1=L(1)$, $P_2=R(0)=L(0)$
and $I_2=L(2)$. The Auslander--Reiten translation $\tau$ takes
$R(n)$ ($n\geq 2$) to $\Sigma R(n-2)$, takes $R(1)$ to $L(1)$ and
takes $L(n)$ to $\Sigma^{-1}L(n+2)$.

The right $\mathbb{Z}A_\infty$ component consists of shifts of
$B(n)$, $n\geq 1$. The Auslander--Reiten translation takes $B(n)$ to
$\Sigma B(n)$. The simple module $S_2=B(1)$ is a $2$-spherical
object of $\cd^b(\mod\Lambda)$ and the additive closure of this
component is the triangulated subcategory generated by $S_2$. This
component will be referred to as the $2$-spherical component.

\subsection{The derived Picard group}

Let $E$ be a spherical object of a triangulated category $\cc$ in
the sense of Seidel and Thomas~\cite{SeidelThomas01}. Then the twist
functor $\Phi_{E}$ defined by
\[\Phi_{E}(M)=\mathrm{Cone}(\bigoplus_{m\in\mathbb{Z}}\Hom(\Sigma^m E,M)\ten \Sigma^m E\stackrel{ev}{\longrightarrow} M),\]
where $ev$ is the evaluation map, is an auto-equivalence of $\cc$
by~\cite[Proposition 2.10]{SeidelThomas01}.

Recall from the preceding subsection that $P_1$ is a $0$-spherical
object and $S_2$ is a $2$-spherical object of $\cd^b(\mod\Lambda)$.
Thus the associated twist functors $\Phi_{P_1}$ and $\Phi_{S_2}$ are
two  auto-equivalences of $\cd^b(\mod\Lambda)$.

\begin{lemma} For $M$ in $\cd^b(\mod\Lambda)$ there are isomorphisms
$\Phi_{S_2}(M)\cong\Phi_{P_1}\circ\Sigma^{-1}(M)$ and
$\Phi_{P_1}^2(M)\cong\nu^{-1}\circ\Sigma^2(M)$. Moreover, if $M$ is indecomposable and belongs to the
$\mathbb{Z}A^\infty_\infty$ component, there exists a unique pair of
integers $(n,n')$ such that $M\cong
\Phi_{P_1}^n\circ\Phi_{S_2}^{n'}(P_2)$.
\end{lemma}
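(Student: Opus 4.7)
My plan is to verify the three claims by direct computations on the tilting generator $\Lambda = P_1 \oplus P_2$ of $\cd^b(\mod\Lambda) \cong \per(\Lambda)$: a triangle endofunctor of such a perfect derived category is determined up to natural isomorphism by its image on the generator together with the induced action on $\RHom(\Lambda,\Lambda)$, so it suffices to check the formulas on $P_1$ and $P_2$ and verify compatibility on the morphism spaces between them. The basic inputs, read off the AR-quiver description just above the lemma, are $\Phi_{P_1}(P_1)\cong\Sigma P_1$ (the $0$-sphericity relation $\Phi_E(E)\cong\Sigma^{1-d}E$), $\nu P_1\cong P_1$ (since $\tau P_1\cong\Sigma^{-1}P_1$), and $\nu S_2\cong\Sigma^2 S_2$ (since $\tau S_2\cong\Sigma S_2$).

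For the first formula, the minimal projective resolution of $S_2$ immediately gives $\RHom(S_2,P_1)=0$ and $\RHom(S_2,P_2)$ concentrated in cohomological degree $2$ of dimension one, so $\Phi_{S_2}(P_1)\cong P_1$ and $\Phi_{S_2}(P_2)\cong\mathrm{Cone}(\Sigma^{-2}S_2\to P_2)$. Since $R(1)=[P_1\to P_2]$ has cohomology $H^{-1}\cong P_2$ and $H^0\cong S_2$, the cohomological truncation triangle reads $\Sigma P_2\to R(1)\to S_2\to\Sigma^2 P_2$; rotating and shifting identifies $\mathrm{Cone}(\Sigma^{-2}S_2\to P_2)$ with $\Sigma^{-1}R(1)$. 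This agrees with $\Phi_{P_1}(\Sigma^{-1}P_1)\cong P_1$ and $\Phi_{P_1}(\Sigma^{-1}P_2)\cong\Sigma^{-1}\mathrm{Cone}(P_1\to P_2)\cong\Sigma^{-1}R(1)$.

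For the second formula, $\Phi_{P_1}^2(P_1)\cong\Sigma^2 P_1\cong\nu^{-1}\Sigma^2 P_1$ is immediate. For $P_2$, one computes that $\RHom(P_1,R(1))$ is one-dimensional concentrated in degree $-1$, generated by the unique non-invertible endomorphism of $P_1$; the evaluation cone $\mathrm{Cone}(\Sigma P_1\to R(1))$ is then the three-term complex $R(2)=[P_1\to P_1\to P_2]$ whose differentials are the unique (up to scalar) non-invertible maps in each degree. On the other side, the minimal injective resolution of $P_2$ has length two (as $\mathrm{gl.dim}\,\Lambda=2$); applying $\nu^{-1}$ and $\Sigma^2$ to it yields the same complex $R(2)$.

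For the third claim, formula (1) rewrites $\Phi_{P_1}^n\Phi_{S_2}^{n'}(P_2)\cong\Phi_{P_1}^{n+n'}\Sigma^{-n'}(P_2)$, so the orbit equals $\{\Phi_{P_1}^a\Sigma^b(P_2):a,b\in\Z\}$. A two-sided induction using formula (2) together with the AR-translation formulas stated above for $R(k)$ and $L(k)$ identifies $\Phi_{P_1}^a(P_2)\cong R(a)$ for $a\geq 0$ and $\Phi_{P_1}^a(P_2)\cong\Sigma^a L(-a)$ for $a\leq 0$; combined with the shifts $\Sigma^b$, these exhaust the indecomposables $\Sigma^c R(k)$ and $\Sigma^c L(k)$ of the $\mathbb{Z}A^\infty_\infty$ component, and the pair $(n,n')$ is recovered uniquely from the cohomological position and the $R$/$L$-type of $M$. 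The main obstacle is upgrading pointwise agreement on $P_1,P_2$ to a natural isomorphism of functors for (1) and (2); I would address this by constructing explicit natural transformations using the triangles $P_1\to P_2\to R(1)\to\Sigma P_1$ and $\Sigma P_2\to R(1)\to S_2\to\Sigma^2 P_2$, or alternatively by comparing the bimodule kernels of the two sides over $\Lambda$.
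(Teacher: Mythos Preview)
Your main strategy has a genuine gap. You aim to upgrade the pointwise check on $P_1,P_2$ to a natural isomorphism of functors, and you flag this as ``the main obstacle''. For the first formula this obstacle is in fact insurmountable: as the paper notes in the Remark immediately following the lemma, the isomorphism $\Phi_{S_2}(M)\cong\Phi_{P_1}\circ\Sigma^{-1}(M)$ is \emph{not} functorial. So there is no natural transformation to construct, and no bimodule-kernel comparison will produce one. The lemma is only asserting an isomorphism object by object, and your framework (determining a triangle functor from its action on a generator) is precisely the one that needs naturality to propagate from $P_1,P_2$ to arbitrary $M$.

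The paper takes a different route that avoids this problem: it computes $\Phi_{P_1}$ and $\Phi_{S_2}$ on the three objects $S_1,P_1,S_2$, one in each Auslander--Reiten component, and then uses that any auto-equivalence preserves the AR quiver together with its $\tau$- and $\Sigma$-actions. Since each component is rigid enough that fixing one vertex (plus commuting with $\tau,\Sigma$) determines the induced quiver automorphism, this yields the object-by-object agreement without any functoriality. Your check on $P_1,P_2$ lands in only two of the three components; even granting an AR-quiver argument you did not invoke, you would still need to control the $2$-spherical component, which is why the paper includes $S_2$ among its test objects. For the second formula the paper's Remark says the isomorphism \emph{is} functorial, so your approach could be made to work there; and your argument for the third claim is sound and close in spirit to the paper's.
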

\begin{proof} Observe that $\Phi_{P_1}(S_1)\cong\Sigma P_2$, $\Phi_{P_1}(P_1)\cong \Sigma P_1$, $\Phi_{P_1}(S_2)\cong S_2$
and $\Phi_{S_2}(S_1)\cong P_2$, $\Phi_{S_2}(P_1)\cong P_1$, $\Phi_{S_2}(S_2)\cong \Sigma^{-1}S_2$.
Since auto-equivalences preserve the shape of the Auslander--Reiten quiver, the statements follow.
\end{proof}

\begin{remark}
 Inspecting the action of $\Phi_{P_1}$ and $\Phi_{S_2}$ on maps shows that the isomorphism
$\Phi_{P_1}^2(M)\cong\nu^{-1}\circ\Sigma^2(M)$ is functorial, while $\Phi_{S_2}(M)\cong\Phi_{P_1}\circ\Sigma^{-1}(M)$ is not.
\end{remark}

Let $\Aut\cd^b(\mod\Lambda)$ denote the group of algebraic
auto-equivalences of $\cd^b(\mod\Lambda)$, \ie those which admits a
dg lift. By~\cite[Lemma 6.4]{Keller94}, such an auto-equivalence is
naturally isomorphic to the derived tensor functor of a complex of
bimodules.

\begin{lemma}
$\Aut\cd^b(\mod\Lambda)$ is isomorphic to $\mathbb{Z}^2\times
K^{\times}$.
\end{lemma}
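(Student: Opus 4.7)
\emph{Proof plan.} I will prove the lemma by realising $\Aut\,\cd^b(\mod\Lambda)$ as a split extension of a $\mathbb{Z}^2$-subgroup, generated by the shift $\Sigma$ and the spherical twist $\Phi_{P_1}$, by the outer automorphism group $\mathrm{Out}(\Lambda)$, which I will identify with $K^\times$.

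The computation of $\mathrm{Out}(\Lambda)$ proceeds as follows. Since $P_1\not\cong P_2$, every $K$-algebra automorphism of $\Lambda$ preserves the decomposition $1=e_1+e_2$ up to inner conjugation, so modulo an inner automorphism it fixes both $e_1$ and $e_2$. Such an automorphism must then rescale the arrows by $\alpha\mapsto\lambda\alpha$ and $\beta\mapsto\mu\beta$ with $\lambda,\mu\in K^\times$, giving an identification $\Aut_{\{e_1,e_2\}}(\Lambda)\cong(K^\times)^2$. The inner automorphisms preserving $\{e_1,e_2\}$ reduce to conjugation by diagonal units $a_1 e_1+a_2 e_2$, producing the anti-diagonal subgroup $\{(\nu,\nu^{-1})\}\subset(K^\times)^2$ (the remaining radical contributions either fail to fix the idempotents or come from the central element $\beta\alpha$ and act trivially). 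Hence $\mathrm{Out}(\Lambda)\cong K^\times$ via $(\lambda,\mu)\mapsto\lambda\mu$.

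Next, I would define a group homomorphism $\Psi\colon\mathbb{Z}^2\times K^\times\to\Aut\,\cd^b(\mod\Lambda)$ by $(n,m,\lambda)\mapsto\Sigma^n\circ\Phi_{P_1}^m\circ\sigma_\lambda$, where $\sigma_\lambda$ is an outer automorphism representing $\lambda$. The three factors commute up to natural isomorphism: $\Sigma$ is central among triangle functors, while $\sigma_\lambda(P_1)\cong P_1$ together with functoriality of the spherical-twist construction yields $\sigma_\lambda\Phi_{P_1}\cong\Phi_{\sigma_\lambda(P_1)}\sigma_\lambda\cong\Phi_{P_1}\sigma_\lambda$. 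Injectivity follows from the functorial relation $\Phi_{P_1}^2\cong\nu^{-1}\Sigma^2$ combined with the preceding lemma, which parametrises indecomposables in the $\mathbb{Z}A^\infty_\infty$ component uniquely as $\Phi_{P_1}^a\Phi_{S_2}^b(P_2)$: evaluating a hypothetical relation $\Sigma^n\Phi_{P_1}^m\sigma_\lambda\cong\mathrm{id}$ at $P_2$ forces $n=m=0$, after which $\sigma_\lambda\cong\mathrm{id}$ gives $\lambda=1$.

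For surjectivity, given $F\in\Aut\,\cd^b(\mod\Lambda)$, the complex $F(\Lambda)=F(P_1)\oplus F(P_2)$ is a basic tilting complex with $\End(F(\Lambda))\cong\Lambda$. Since $F$ commutes with the Auslander--Reiten translation $\tau$, it permutes the AR components; the two $\mathbb{Z}A_\infty$ components are distinguished by $\tau=\Sigma^{\mp 1}$, so $F$ preserves each component individually. Thus $F(P_1)$ lies in the $0$-spherical component and $F(P_2)$ in the $\mathbb{Z}A^\infty_\infty$ component. Using the tilting conditions ($\Hom^{\neq 0}(F(\Lambda),F(\Lambda))=0$ and $\End(F(\Lambda))\cong\Lambda$ as an algebra), I would show $F(P_1)\cong\Sigma^n P_1$ and $F(P_2)\cong\Sigma^c\Phi_{P_1}^d(P_2)$, with $c$ and $d$ matched to $n$ by the $\Hom$-dimensions between the two summands (which must equal $\dim e_1\Lambda e_2=\dim e_2\Lambda e_1=1$, concentrated in degree $0$). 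This yields $F(\Lambda)\cong\Sigma^N\Phi_{P_1}^M(\Lambda)$ for some $N,M\in\mathbb{Z}$, and by Rickard's derived Morita theorem $F\circ\Sigma^{-N}\Phi_{P_1}^{-M}$ is then induced by an algebra automorphism, hence lies in the image of $\mathrm{Out}(\Lambda)$, completing the surjection. The principal obstacle is this last classification step: ruling out the $\tau$-orbits $\{\Sigma^a P_1(k)\}_{a\in\mathbb{Z}}$ for $k\geq 2$ in the $0$-spherical component as candidates for $F(P_1)$, and constraining $F(P_2)$ to the $\Phi_{P_1}$-orbit of $P_2$ up to shift; both reductions rely on explicit Hom- and Ext-computations exploiting that $P_1$ is $0$-spherical, so that $P_1(k)$ for $k\geq 2$ carries richer endomorphism and extension structure incompatible with the tilting conditions imposed on $F(\Lambda)$.
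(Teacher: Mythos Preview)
Your approach is correct but differs from the paper's in two respects. First, you use $\Sigma$ and $\Phi_{P_1}$ as generators of the $\mathbb{Z}^2$-factor, whereas the paper uses $\Phi_{P_1}$ and $\Phi_{S_2}$; since $\Phi_{S_2}(M)\cong\Phi_{P_1}\Sigma^{-1}(M)$ on objects (though not functorially), these two choices parametrise the same $\mathbb{Z}^2$-lattice of positions in the $\mathbb{Z}A_\infty^\infty$ component. Second, and more substantially, the paper's argument is top-down rather than bottom-up: instead of constructing a map \emph{into} $\Aut\cd^b(\mod\Lambda)$ and proving surjectivity via Rickard's theorem, the paper defines a surjection $f\colon\Aut\cd^b(\mod\Lambda)\to\mathbb{Z}^2$ by sending $F$ to the unique pair $(n_F,n'_F)$ with $F(P_2)\cong\Phi_{P_1}^{n_F}\Phi_{S_2}^{n'_F}(P_2)$ (using only the preceding lemma), observes the obvious retraction $(n,n')\mapsto\Phi_{P_1}^n\Phi_{S_2}^{n'}$, and then identifies $\ker(f)$. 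Any $F$ with $F(P_2)\cong P_2$ is quickly seen to fix $P_1$ as well, hence comes from an outer automorphism fixing both idempotents, giving $\ker(f)\cong K^\times$. This avoids tracking $F(P_1)$ separately and sidesteps both the matching of parameters and the appeal to derived Morita theory in your surjectivity step.

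Your route has the merit of computing $\mathrm{Out}(\Lambda)\cong K^\times$ explicitly, which the paper leaves as a one-line assertion. Also, the ``principal obstacle'' you flag is milder than you suggest: since $F$ preserves graded endomorphism spaces, Lemma~\ref{l:morphism-on-spherical-comp}(a) immediately rules out $P_1(k)$ for $k\geq 2$ as a candidate for $F(P_1)$, so $F(P_1)\cong\Sigma^a P_1$ follows at once; the remaining constraint $a=c+d$ then drops out of comparing $\Hom(F(P_1),\Sigma^m F(P_2))$ with $\Hom(P_1,\Sigma^m P_2)$.
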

\begin{proof} Let $F\in\Aut\cd^b(\mod\Lambda)$. Since $F$ preserves
the Auslander--Reiten quiver, the object $F(P_2)$ is in the
$\mathbb{Z}A_\infty^\infty$ component. Thus there is a pair of
integers $(n_F,n'_F)$ such that $F(P_2)\cong
\Phi_{P_2}^{n_F}\circ\Phi_{S_2}^{n'_F}(P_2)$. This allows us to
define a map
\[\xymatrix@R=0.1pc{f:&\Aut\cd^b(\mod\Lambda)\ar[r]&\mathbb{Z}^2\\
& F\ar@{|->}[r] &(n_F,n'_F).}\] This map is clearly a
surjective group homomorphism. Moreover, the group homomorphism
\[\xymatrix@R=0.1pc{\mathbb{Z}^2\ar[r]&\Aut\cd^b(\mod\Lambda)\\
(n,n')\ar@{|->}[r] &\Phi_{P_2}^{n}\circ\Phi_{S_2}^{n'}}\] is a
retraction of $f$. Therefore
$\Aut\cd^b(\mod\Lambda)\cong\mathbb{Z}^2\times \ker(f)$.

Let $F\in\ker(f)$. Then $F(P_2)\cong P_2$. This forces $F(P_1)\cong
P_1$, and hence $F$ is induced from an outer automorphism of
$\Lambda$ which fixes the two primitive idempotents $e_1$ and $e_2$.
Thus $\ker(f)\cong K^\times$, finishing the proof.
\end{proof}

\subsection{Morphism spaces}

We first compute the morphism spaces between the two $\mathbb{Z}A_\infty$ components.
\begin{lemma}\label{l:morphism-on-spherical-comp}
\begin{itemize}
\item[(a)] For $n\geq 2$, $\Hom(P_1(n),\Sigma^m
P_1(n))$ does not vanish for some $m>0$ and for some $m<0$. For
$n=1$, $\Hom(P_1,\Sigma^m P_1)$ is isomorphic to $K[x]/x^2$ for
$m=0$ and vanishes for $m\neq 0$.
\item[(b)] For $n\geq 2$, $\Hom(B(n),\Sigma^m
B(n))$ does not vanish for some $m>0$ and for some $m<0$. For $n=1$,
$\Hom(S_2,\Sigma^m S_2)$ is $K$ for $m=0, 2$ and vanishes for $m\neq
0,2$.
\end{itemize}
\end{lemma}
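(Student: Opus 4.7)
The plan has two layers: a duality symmetry coming from the spherical generator, and a single direct chain-level computation at an extreme shift.

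For $n=1$ the statements reduce to standard computations. In part (a), $P_1$ is projective, so $\Hom(P_1,\Sigma^m P_1)=0$ for $m\neq 0$, while $\End(P_1)=e_1\Lambda e_1$ has basis $\{e_1,\beta\alpha\}$ with $(\beta\alpha)^2=\beta(\alpha\beta)\alpha=0$ using $\alpha\beta=0$, hence $\End(P_1)\cong K[x]/x^2$. In part (b), I would write down a minimal projective resolution of $S_2$: since $\Omega S_2=\mathrm{rad}(P_2)=S_1$ and $\Omega S_1=\mathrm{rad}(P_1)\cong P_2$ is projective, the resolution has length two, namely $0\to P_2\to P_1\to P_2\to S_2\to 0$. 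Applying $\Hom(-,S_2)$ and using $\Hom(P_2,S_2)=K$ and $\Hom(P_1,S_2)=0$ produces the complex $K\to 0\to K$, so $\Ext^m(S_2,S_2)=K$ for $m=0,2$ and vanishes otherwise.

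For $n\geq 2$ I would first establish the duality symmetry. In (a), $P_1=I_1$ gives $\nu P_1\cong P_1$; applying $\nu$ termwise to the complex defining $P_1(n)$ shows $\nu P_1(n)\cong P_1(n)$, and the Auslander--Reiten formula of Section~\ref{ss:nakayama-for-fd-alg} then gives $\dim\Hom(P_1(n),\Sigma^m P_1(n))=\dim\Hom(P_1(n),\Sigma^{-m}P_1(n))$. In (b), since $S_2$ is $2$-spherical, the subcategory $\thick(S_2)$ is $2$-Calabi--Yau with Serre functor $\Sigma^2$, so $\dim\Hom(B(n),\Sigma^m B(n))=\dim\Hom(B(n),\Sigma^{2-m}B(n))$. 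In each case the required non-vanishing in strictly positive and strictly negative degrees are thus equivalent, so it suffices to exhibit one strictly positive degree in which $\Hom$ does not vanish.

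I would then compute directly at $m=n-1$ in (a) and at $m=n+1$ in (b). In each case the overlap of the positions of $F$ and $\Sigma^m F$ reduces to a single position, so a chain map $\phi$ is concentrated in one component: $\phi^0\in\End(P_1)$ in (a) and $\phi^0\in\End(P_2)=K$ in (b). The chain-map condition is vacuous at the boundary of the complex, as the relevant differential vanishes there. Only two of the homotopy components $h^i$ hit non-zero targets, and the resulting null-homotopic chain maps at position $0$ are calculated using the vanishing compositions $p\circ i=0$, $p\circ\rho=0$, $\rho\circ i=0$ (all induced by $\alpha\beta=0$) together with $\rho^2=0$. In (a) this pins the null-homotopic $\phi^0$ to the radical subspace $Kx\subset K[x]/x^2$, giving $\Hom(P_1(n),\Sigma^{n-1}P_1(n))\cong K$; in (b) it forces the null-homotopic $\phi^0$ to vanish, giving $\Hom(B(n),\Sigma^{n+1}B(n))\cong K$. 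Combined with the symmetries above, one obtains non-vanishing at $m=-(n-1)$ and $m=1-n$ respectively, both strictly negative for $n\geq 2$. The main technical point is the null-homotopy bookkeeping at the extreme shift; this is routine once the vanishing compositions among $i,\rho,p$ coming from $\alpha\beta=0$ are tabulated.
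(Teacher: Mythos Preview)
Your proposal is correct. The paper's own proof is simply a pointer---``direct computation, or apply some general result (e.g.~\cite[Section 2]{HolmJorgensenYang11}) to the triangulated categories generated by $P_1$ and $S_2$''---so your argument is a fully worked-out instance of the first option. Your use of Serre duality ($\nu P_1(n)\cong P_1(n)$ from $\tau P_1(n)=\Sigma^{-1}P_1(n)$, and the $2$-Calabi--Yau property of $\thick(S_2)$ from $\tau B(n)=\Sigma B(n)$) to reduce each case to a single extreme-shift computation is an efficient organisation of the direct computation that the paper leaves implicit; the alternative route via \cite{HolmJorgensenYang11} would instead invoke the general structure of $\thick$-closures of spherical objects without touching the complexes.
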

\begin{proof}
Direct computation, or apply some general result (e.g.~\cite[Section
2]{HolmJorgensenYang11}) to the triangulated categories generated by
$P_1$ and $S_2$.
\end{proof}

Next we compute the morphism spaces between $P_2$ and the objects on the $\mathbb{Z}A^\infty_\infty$ component.

\begin{lemma}\label{l:morphism-on-non-spherical-comp} Let $n\geq 0$.
\begin{itemize}
\item[(a)]
$\Hom(P_2,\Sigma^m R(n))$ is $K$ if $-n\leq m\leq 0$ and is $0$
otherwise.
\item[(a')]
$\Hom(R(n),\Sigma^m P_2)$ is $K$ if $2\leq m\leq n$ or if $n=0, m=0$
and is $0$ otherwise.
\item[(b)]
$\Hom(P_2,\Sigma^m L(n))$ is $K$ if $2-n\leq m\leq 0$ or if $n=0,
m=0$ and is $0$ otherwise.
\item[(b')]$\Hom(L(n),\Sigma^m P_2)$ is $K$
if $0\leq m\leq n$ and is $0$ otherwise.
\end{itemize}
\end{lemma}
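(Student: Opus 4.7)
The plan is to compute all four Hom groups directly as cohomology of the naive Hom complex in $K^b(\proj\Lambda)$, using that $P_2$ is a stalk complex in degree $0$ and that $R(n), L(n)$ are explicit bounded complexes of finitely generated projectives.

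First I would record the basic Hom spaces. Using $\Hom_\Lambda(e_i\Lambda, e_j\Lambda) \cong e_j\Lambda e_i$ and the relation $\alpha\beta = 0$, the relevant spaces $\Hom(P_i, P_j)$ for $i, j \in \{1, 2\}$ are each at most two-dimensional, and the differentials appearing in $R(n)$ and $L(n)$ are the unique non-isomorphisms, namely left multiplication by $\beta\alpha$ for a map $P_1 \to P_1$, by $\alpha$ for $P_1 \to P_2$, and by $\beta$ for $P_2 \to P_1$.

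Second, I would use the single relation $\alpha\beta = 0$ to show that almost every induced differential on the Hom complexes vanishes. The identities $\alpha \cdot \beta\alpha = 0$, $\beta\alpha \cdot \beta = 0$, and $\alpha \cdot \beta = 0$ kill the induced maps on $\Hom(P_1, P_2)$ and $\Hom(P_2, P_1)$ coming from either a $P_1 \to P_1$ differential or from the composite $P_2 \to P_1 \to P_2$ (or its dual). The only surviving induced differentials are $\Hom(P_2, P_2) \to \Hom(P_1, P_2)$ and $\Hom(P_2, P_2) \to \Hom(P_2, P_1)$, each sending $\mathrm{id}_{P_2}$ to $\alpha$ or $\beta$ respectively, and each an isomorphism of one-dimensional spaces.

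Third, I would dispatch the four cases. For (a), every induced differential on $\Hom^\bullet(P_2, R(n))$ vanishes, so the cohomology coincides with the complex term by term: $K$ in each degree $-n \leq m \leq 0$. Case (b') is entirely analogous, yielding $K$ in each degree $0 \leq m \leq n$. For (a'), the only surviving differential on $\Hom^\bullet(R(n), P_2)$ is an isomorphism from $\Hom(P_2, P_2)$ in degree $0$ to $\Hom(P_1, P_2)$ in degree $1$, killing $H^0$ and $H^1$ and leaving $K$ in each degree $2 \leq m \leq n$; the edge case $n = 0$ degenerates to $\Hom(P_2, P_2)$ concentrated in degree $0$. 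Case (b) is the dual analysis: the single surviving differential sits at the leftmost end of $\Hom^\bullet(P_2, L(n))$, killing $H^{-n}$ and $H^{-n+1}$ and leaving $K$ in each degree $2 - n \leq m \leq 0$, with the edge case $n = 0$ again giving $K$ in degree $0$.

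The main obstacle is entirely bookkeeping rather than conceptual: one must carefully track the boundary degrees of each Hom complex and verify that the surviving isomorphism sits in the right place so that the stated ranges match exactly, including the slightly awkward edge cases $n = 0$ in (a') and (b). Once the relation $\alpha\beta = 0$ has eliminated all other differentials, no further input is required.
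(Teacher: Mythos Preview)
Your proof is correct. For parts (a) and (b) it is essentially the paper's argument in different clothing: the paper invokes the identity $\Hom(P_2,M)=H^0(M)e_2$ (valid because $P_2=e_2\Lambda$ is projective), and your explicit Hom-complex computation is precisely the computation of $H^m(R(n))e_2$ and $H^m(L(n))e_2$ term by term.

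The genuine difference is in (a') and (b'). The paper does not compute these directly; instead it applies the Auslander--Reiten formula $\mathrm{D}\Hom(M,N)\cong\Hom(N,\tau\Sigma M)$, together with the already recorded action of $\tau$ on the $\mathbb{Z}A_\infty^\infty$ component, to deduce (a') and (b') from (a) and (b). This halves the amount of explicit computation and makes the symmetry between the two pairs transparent, at the cost of relying on the Nakayama/AR machinery and the description of $\tau$ established earlier in the section. Your route is more elementary and self-contained: it needs nothing beyond the relation $\alpha\beta=0$ and the projective Hom spaces, but you must carry out all four computations and track the edge cases by hand. Both approaches are short; yours would be preferable in a context where the AR translation has not yet been identified.
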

\begin{proof}
(a) and (b) Because $\Hom(P_2,M)=H^0(M)e_2$.

(a') and (b') are obtained from (a) and (b) by applying the
Auslander--Reiten formula
$\mathrm{D}\Hom(M,N)\cong\Hom(N,\tau\Sigma
M)$.
\end{proof}

\subsection{Silting objects and simple-minded collections}

Now we are ready to classify the silting objects and simple-minded collections in $\cd^b(\mod\Lambda)$.

\begin{proposition}\label{p:example-silting}
Up to isomorphism, any basic silting object of $\cd^b(\mod\Lambda)$
belongs to one of the following two families
\begin{itemize}
\item[$\cdot$] $\Phi_{P_1}^n\circ\Phi_{S_2}^{n'}(P_1\oplus P_2)$, $n,n'\in\mathbb{Z}$,
the corresponding simple-minded collection is
$\Phi_{P_1}^n\circ\Phi_{S_2}^{n'}\{S_1,S_2\}$,
\item[$\cdot$] $\Phi_{P_1}^n\circ\Phi_{S_2}^{n'}(\Sigma^m S_1\oplus
P_2)$, $n,n'\in\mathbb{Z}$ and $m\leq -1$, the corresponding simple-minded collection is
$\Phi_{P_1}^n\circ\Phi_{S_2}^{n'}\{\Sigma^m S_1,I_2\}$.
\end{itemize}
\end{proposition}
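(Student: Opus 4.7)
The plan is to classify basic silting objects $M$ in $\cd^b(\mod\Lambda)$ by first identifying which indecomposable objects can appear as summands and then determining which pairs actually yield a silting object. By Corollary \ref{c:silting-have-fixed-number-of-summands}, $M$ decomposes as $M_1 \oplus M_2$ with $M_i$ indecomposable, so the problem reduces to analysing unordered pairs.

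I would begin by checking, component by component of the Auslander--Reiten quiver, which indecomposables $N$ satisfy $\Hom(N, \Sigma^k N) = 0$ for all $k > 0$. Lemma \ref{l:morphism-on-spherical-comp} shows that the $2$-spherical component contributes no admissible summand, while the $0$-spherical component contributes only the shifts $\Sigma^k P_1$. Every indecomposable $N$ in the $\mathbb{Z}A_\infty^\infty$ component is of the form $\Phi_{P_1}^n \Phi_{S_2}^{n'}(P_2)$; since $\Phi_{P_1}$ and $\Phi_{S_2}$ are auto-equivalences, $\Hom(N, \Sigma^k N) \cong \Hom(P_2, \Sigma^k P_2)$, which vanishes for $k > 0$ by projectivity of $P_2$, so every such $N$ is admissible. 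Two summands both from the $0$-spherical component would fail to generate $\cd^b(\mod\Lambda)$, so at least one summand, say $M_2$, lies in the $\mathbb{Z}A_\infty^\infty$ component; by applying a suitable $\Phi_{P_1}^{-n} \Phi_{S_2}^{-n'}$ I may normalize so that $M_2 = P_2$.

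The case split is then as follows. In Case A ($M_1 = \Sigma^k P_1$), the silting conditions on $\Hom(\Sigma^k P_1, \Sigma^m P_2)$ and $\Hom(P_2, \Sigma^m \Sigma^k P_1)$ are easily computed, since both $P_1$ and $P_2$ are projective modules concentrated in degree zero, and they force $k = 0$; undoing the normalization gives Family 1. In Case B both summands lie in the $\mathbb{Z}A_\infty^\infty$ component; writing $M_1 = \Sigma^j X$ with $X \in \{R(n), L(n) : n \geq 1\} \cup \{P_2\}$, a direct case analysis using Lemma \ref{l:morphism-on-non-spherical-comp} eliminates $X = P_2$ (fails to generate) as well as $R(n)$ and $L(n)$ for $n \geq 2$, leaving only $X = R(1)$ with $j \geq 0$ or $X = S_1 = L(1)$ with $j \leq -1$. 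The latter yields Family 2 directly with $m = j$, while the former must be identified with Family 2 by means of $R(1) \cong \Phi_{P_1}(P_2)$ together with the object-wise isomorphism $\Phi_{S_2}(N) \cong \Phi_{P_1}(\Sigma^{-1}N)$: a short calculation then gives $\Sigma^j R(1) \oplus P_2 \cong \Phi_{P_1}^{j+1} \Phi_{S_2}^{-j}(\Sigma^{-j-1} S_1 \oplus P_2)$ with $m = -j-1 \leq -1$.

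The corresponding simple-minded collections follow from the bijection $\phi_{21}$ of Theorem \ref{t:main-thm-bijections}, which commutes with auto-equivalences; starting from $\{S_1, S_2\}$ for $\Lambda = P_1 \oplus P_2$ gives Family 1, while for the base case $\Sigma^m S_1 \oplus P_2$ of Family 2 I would verify directly that $\{\Sigma^m S_1, I_2\}$ satisfies the required Hom-duality, using that $\mathrm{pd}\, S_1 = 1$ makes $\Hom(S_1, \Sigma^\bullet S_1)$ concentrated in degree zero and that injectivity of $I_2$ concentrates $\Hom(P_2, \Sigma^\bullet I_2)$ in degree zero. The main obstacle is the case analysis in Case B: the combinatorics arising from Lemma \ref{l:morphism-on-non-spherical-comp} must be tracked carefully for both the $R(n)$ and $L(n)$ sub-families, and one must then recognize that the $\Sigma^j R(1)$-series silting objects already belong to Family 2 under a nontrivial reparametrization, without which one would be misled into positing a third family.
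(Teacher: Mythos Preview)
Your proposal is correct and follows essentially the same route as the paper: rule out summands via Lemma~\ref{l:morphism-on-spherical-comp}, use the transitive action of the twist functors on the $\mathbb{Z}A_\infty^\infty$ component to normalise one summand to $P_2$, and then invoke Lemma~\ref{l:morphism-on-non-spherical-comp} to determine the second summand. Your absorption identity $\Sigma^j R(1)\oplus P_2\cong\Phi_{P_1}^{\,j+1}\Phi_{S_2}^{-j}(\Sigma^{-j-1}S_1\oplus P_2)$ is the same reparametrisation the paper uses (written there with exponents $(-m-1,m)$), and your explicit verification of the Hom-duality for $\{\Sigma^m S_1,I_2\}$ simply unpacks what the paper summarises in one line.
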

\begin{proof} Let $N$ be an indecomposable direct summand of a silting object.
By Lemma~\ref{l:morphism-on-spherical-comp}, $N$ does not belong to the $2$-spherical component, and $N$ belongs to the
$0$-spherical component if and only if $N$ is a shift of $P_1$.
Moreover, a basic silting object can have at most one shift of $P_1$ as a direct summand.
It follows that a silting object has at least one indecomposable direct summand from the $\mathbb{Z}A_\infty^\infty$ component.

Let $M=M_1\oplus M_2$ be a silting object with $M_1$ and $M_2$
indecomposable. Assume that $M_1$ belongs to the
$\mathbb{Z}A_\infty^\infty$ component. Up to an auto-equivalence of
the form $\Phi_{P_1}^n\circ\Phi_{S_2}^{n'}$, we may assume that
$M_1= P_2$. Then, if $M_2$ belongs to the $0$-spherical component it
has to be $P_1$. Thus we assume that $M_2$ also belongs to the
$\mathbb{Z}A_\infty^\infty$ component. Then it follows from
Lemma~\ref{l:morphism-on-non-spherical-comp} that $M_2$ is
isomorphic to $\Sigma^m S_1$ for some $m\leq -1$ or to $\Sigma^m
R(1)$ for some $m\geq 0$. Observing $P_2\oplus\Sigma^m
R(1)=\Phi_{P_1}^{-m-1}\circ\Phi_{S_2}^m(P_2\oplus\Sigma^{-m-1} S_1)$
for $m\geq 0$ finishes the proof for the silting-object part.

That the simple-minded collection corresponding to a silting object is the desired one
follows from the Hom-duality they satisfy.
\end{proof}

\subsection{The silting quiver}

Recall from \cite{AiharaIyama12} that the \emph{silting quiver} has as
vertices the isomorphism classes of basic silting objects and there
is an arrow from $M$ to $M'$ if $M'$ can be obtained from $M$ by a
left mutation.

The vertex set of the silting quiver of $\cd^b(\mod\Lambda)$ is
$\{(n,n',m)\mid n,n'\in\mathbb{Z},m\in\mathbb{Z}_{\leq 0}\}$, where
$(n,n',0)$ represents the silting object
$\Phi_{P_1}^n\circ\Phi_{S_2}^{n'}(P_1\oplus P_2)$ and $(n,n',m)$
($m\leq -1$) represents the silting object
$\Phi_{P_1}^n\circ\Phi_{S_2}^{n'}(\Sigma^m S_1\oplus P_2)$. It is
straightforward to show that from each vertex $(n,n',m)$ there are precisely
two outgoing arrows whose targets are respectively
\begin{itemize}
\item[$\cdot$] $(n,n'-1,m)$ and $(n+1,n',m-1)$ if $m=0$,
\item[$\cdot$] $(n+1,n'-1,m-1)$ and $(n,n',m+1)$ if $m\leq -1$.\end{itemize}

\subsection{Hearts and the space of stability conditions}

\begin{lemma}\label{l:example-length-heart}
The heart of any $t$-structure on $\cd^b(\mod\Lambda)$ is a length
category.
\end{lemma}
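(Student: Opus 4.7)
The strategy is to combine the classifications already established in this section with the Hom-calculations on the Auslander--Reiten components. By Theorem~\ref{maintheorembijections}, bounded $t$-structures of $\cd^b(\mod\Lambda)$ with length heart are in bijection with basic silting objects, and by Proposition~\ref{p:example-silting} the latter fall into two explicit families. So the content to prove is that \emph{every} bounded $t$-structure on $\cd^b(\mod\Lambda)$ has length heart; equivalently, that its heart $\ca$ admits a finite simple-minded collection whose extension closure recovers $\ca$.

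The first step is to restrict the possible indecomposable objects of $\ca$. For any $X \in \ca$ the $t$-structure axioms force $\Hom(X,\Sigma^m X) = 0$ for all $m < 0$. Lemma~\ref{l:morphism-on-spherical-comp} then immediately excludes every indecomposable of the $0$-spherical component other than (shifts of) $P_1$, and every indecomposable of the $2$-spherical component other than (shifts of) $S_2$. So any indecomposable of $\ca$ is, up to shift, either $P_1$, or $S_2$, or one of the $R(n)$, $L(n)$ on the $\mathbb{Z}A_\infty^\infty$ component.

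The second step is to pin down which collections of such indecomposables can coexist in $\ca$. Because the autoequivalences $\Phi_{P_1}^n\circ\Phi_{S_2}^{n'}$ send bounded $t$-structures to bounded $t$-structures and act transitively on the shifts of $P_2$ inside the $\mathbb{Z}A_\infty^\infty$ component, one may assume after such a twist that $P_2$ itself lies in $\ca$, or else that no object of the $\mathbb{Z}A_\infty^\infty$ component meets $\ca$ in a "rightmost" position (in which case the argument is symmetric). With $P_2 \in \ca$ fixed, the Hom formulas of Lemma~\ref{l:morphism-on-non-spherical-comp} together with the heart axiom $\Hom(M,\Sigma^{-1}N)=0$ severely constrain which further shifts of $R(n)$, $L(n)$, $P_1$ or $S_2$ can simultaneously lie in $\ca$: a direct case analysis leaves only the two possibilities already listed in Proposition~\ref{p:example-silting}, namely a simple-minded pair of the form $\{S_1,S_2\}$ or $\{\Sigma^m S_1, I_2\}$ with $m \leq -1$.

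The third and concluding step identifies two objects $X_1,X_2 \in \ca$ forming a simple-minded collection of $\cd^b(\mod\Lambda)$ as above, and then applies Proposition~\ref{p:simpleminded-to-t-str} to the $t$-structure $\phi_{32}(X_1,X_2)$: by construction this $t$-structure is contained in the given one (its aisle and co-aisle embed into $\cc^{\leq 0}$ and $\cc^{\geq 0}$ respectively), and since both are bounded $t$-structures they must coincide, so $\ca$ is the length heart of $\phi_{32}(X_1,X_2)$. The main obstacle is the second step: the case analysis has to rule out exotic mixtures of indecomposables from different AR-components lying in a single heart, and this is where the Hom-computations of Lemma~\ref{l:morphism-on-non-spherical-comp} are used most carefully. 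Once those mixtures are eliminated, the bijection of Theorem~\ref{maintheorembijections} does the rest.
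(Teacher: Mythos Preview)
Your first step coincides with the paper's argument, but the paper then finishes much more quickly and directly than you propose. After observing (via Lemma~\ref{l:morphism-on-spherical-comp}) that $\ca$ contains at most one indecomposable from each spherical component, the paper assumes $P_2\in\ca$ and uses Lemma~\ref{l:morphism-on-non-spherical-comp} to show that no shift of $R(n)$ or $L(n)$ with $n\geq 3$ can lie in $\ca$; since at most one shift of any fixed object can belong to a heart, this bounds the number of isomorphism classes of indecomposables in $\ca$ by $7$. A Hom-finite Krull--Schmidt abelian category with only finitely many indecomposables is automatically a length category, and the proof ends there.

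Your steps 2 and 3, by contrast, aim to classify the bounded $t$-structures outright by locating a simple-minded pair inside $\ca$ and then matching the $t$-structure against $\phi_{32}$ of that pair. This is considerably more work than needed, and it inverts the logical order of the section: in the paper the lemma is proved first, and the classification of bounded $t$-structures (via the bijections of Theorem~\ref{maintheorembijections} and Proposition~\ref{p:example-silting}) is drawn as a consequence afterwards. Your plan also leaves a genuine gap: the ``direct case analysis'' in step 2 that is supposed to leave only the two families of Proposition~\ref{p:example-silting} is asserted but not performed, and it is the entire content of your argument. You would in particular have to justify why the heart must contain \emph{some} object of the $\mathbb{Z}A_\infty^\infty$ component before you can normalise to $P_2\in\ca$, and your aside about a ``rightmost position'' does not address this. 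The paper sidesteps all of this by simply counting.
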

\begin{proof} Let $\ca$ be the heart of a $t$-structure on
$\cd^b(\mod\Lambda)$. We will show that $\ca$ has only finitely many
isomorphism classes of indecomposable objects. Such an abelian
category must be a length category.

Due to vanishing of negative extensions, it follows from
Lemma~\ref{l:morphism-on-spherical-comp} that $\ca$ contains at most
one indecomposable object from the $0$-spherical component
respectively the $2$-spherical component.

Suppose that $\ca$ contains an indecomposable object from the
$\mathbb{Z}A_\infty^\infty$ component. Without loss of generality we
may assume that it is $P_2$. It follows from
Lemma~\ref{l:morphism-on-non-spherical-comp} that for $n\geq 3$ and
$m\in\mathbb{Z}$ either $\Hom(P_2,\Sigma^{m'}\Sigma^m R(n))\neq 0$
for some $m'<0$ or $\Hom(\Sigma^m R(n),\Sigma^{m'}P_2)\neq 0$ for
some $m'<0$. Similarly for $L(n)$. Therefore an indecomposable
object $M$ belongs to the heart only if it is isomorphic to one of
$\Sigma^m P_2$, $\Sigma^m R(1)$, $\Sigma^m R(2)$, $\Sigma^m L(1)$
and $\Sigma^m L(2)$, $m\in\mathbb{Z}$. But at most one shift of a
nonzero object can belong to a heart. So $\ca$ contains at most $7$
indecomposable objects up to isomorphism.
\end{proof}

In view of Lemma~\ref{l:example-length-heart}, the result in the
preceding subsection shows that all bounded $t$-structures on
$\cd^b(\mod\Lambda)$ are related to each other by a sequence of
left or/and right mutations. In particular, this
implies that the $t$-structures Woolf considered in~\cite[Section
3.1]{Woolf10} are already all bounded $t$-structures on
$\cd^b(\mod\Lambda)$. Therefore we have

\begin{corollary}
\begin{itemize}
\item[(a)] The Bridgeland space of stability conditions on
$\cd^b(\mod\Lambda)$ is $\mathbb{C}^2$.
\item[(b)] An abelian category is the heart of some bounded
$t$-structure on $\cd^b(\mod\Lambda)$ if and only if it is
equivalent to $\mod\Gamma$ for $\Gamma=\Lambda$ or
$\Gamma=K(\xymatrix@C=1pc{\cdot\ar[r]&\cdot})$ or $\Gamma=K\oplus K$.
\end{itemize}
\end{corollary}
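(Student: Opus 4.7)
The plan is to derive both parts from Proposition~\ref{p:example-silting} (the classification of silting objects) together with the main bijection Theorem~\ref{maintheorembijections} and its naturality Theorem~\ref{maintheoremmutations}.

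For part (b), I would first apply Lemma~\ref{l:example-length-heart} to ensure every bounded $t$-structure on $\cd^b(\mod\Lambda)$ has a length heart, so that it lies in the image of $\phi_{31}$. By Lemma~\ref{l:from-silting}(a), the heart of $\phi_{31}(M)$ is equivalent to $\mod\End(M)$; hence the possible hearts are in bijection with the endomorphism algebras of the silting objects classified in Proposition~\ref{p:example-silting}. Since $\Phi_{P_1}^n\circ\Phi_{S_2}^{n'}$ is an auto-equivalence of $\cd^b(\mod\Lambda)$, it preserves endomorphism algebras, so it suffices to consider the three representative objects $P_1\oplus P_2$, $\Sigma^{-1}S_1\oplus P_2$, and $\Sigma^m S_1\oplus P_2$ with $m\leq -2$. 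A direct application of Lemma~\ref{l:morphism-on-non-spherical-comp} yields $\Hom(\Sigma^{-1}S_1,P_2)\cong K$ while all other cross-Homs vanish in degree $0$; combined with the obvious $\End(S_1)=\End(P_2)=K$, this gives the three endomorphism algebras $\Lambda$, $K(\xymatrix@C=1pc{\cdot\ar[r]&\cdot})$ and $K\oplus K$ announced in~(b).

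For part (a), I would invoke the paragraph immediately preceding the corollary: from the explicit description of the silting quiver (which is connected) and Theorem~\ref{maintheoremmutations}, every bounded $t$-structure of $\cd^b(\mod\Lambda)$ is obtained from the standard one by a finite sequence of left and right mutations. This mutation-closed family coincides with the collection of $t$-structures to which Woolf's gluing procedure~\cite[Section~3.1]{Woolf10} applies, so Woolf's~Theorem~3.1 identifies the full Bridgeland stability space with $\mathbb{C}^2$.

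The only non-routine step is verifying the endomorphism-algebra computation using Lemma~\ref{l:morphism-on-non-spherical-comp}; I expect this to be the principal (though mild) obstacle, since one must additionally check that the natural multiplication on $\End(\Sigma^{-1}S_1\oplus P_2)$ gives the $A_2$-path algebra rather than a nontrivial deformation, which follows from the fact that the unique nonzero morphism $\Sigma^{-1}S_1\to P_2$ squares to zero for degree reasons.
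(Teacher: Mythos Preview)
Your proposal is correct and follows essentially the same route as the paper: part (a) is exactly the paper's argument (Lemma~\ref{l:example-length-heart} plus connectivity of the silting quiver plus Theorem~\ref{maintheoremmutations} reduces to Woolf's Theorem~3.1), and for part (b) the paper leaves the proof implicit, whereas you spell it out by computing $\End(M)$ for the silting objects of Proposition~\ref{p:example-silting} via Lemma~\ref{l:morphism-on-non-spherical-comp}. Your final remark about the ``square'' is slightly awkward in wording (the morphism $\Sigma^{-1}S_1\to P_2$ cannot literally be squared), but the intended point---that $\Hom(P_2,\Sigma^{-1}S_1)=0$ forces the endomorphism ring to be the $A_2$ path algebra with no extra relations---is correct and immediate.
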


\def\cprime{$'$}
\providecommand{\bysame}{\leavevmode\hbox to3em{\hrulefill}\thinspace}
\providecommand{\MR}{\relax\ifhmode\unskip\space\fi MR }
\providecommand{\MRhref}[2]{%
  \href{http://www.ams.org/mathscinet-getitem?mr=#1}{#2}
}
\providecommand{\href}[2]{#2}

\end{document}